\documentclass[10pt]{amsart}

\usepackage{john}
\usepackage{hyperref}
\usepackage{tikz}
\usepackage{pgfplots}

\numberwithin{theorem}{section}
\usepackage{color}
\usepackage{framed}

\makeatletter
\let\@@pmod\pmod
\DeclareRobustCommand{\pmod}{\@ifstar\@pmods\@@pmod}
\def\@pmods#1{\mkern4mu({\operator@font mod}\mkern 6mu#1)}
\makeatother

\newcommand{\NP}{\oper{NP}}

\newcommand{\ve}{\varepsilon}
\newcommand{\cP}{\mathcal P}
\newcommand{\bfloor}[1]{\left\lfloor#1\right\rfloor}

\newcommand{\Fp}{{\mathbf F}_p}


\newtheoremstyle{smallexample}
{}
{ }
{\small\addtolength{\leftskip}{3em}}
{ }
{\bfseries}
{.}
{ }
{\thmname{#1}}

\theoremstyle{smallexample}
\newtheorem*{sex-full}{$\Gamma_0(N)$-level}
\newtheorem*{sex-rhobar}{$\rhobar$-component}
\newtheorem*{sex-either}{Either example}

\usepackage[displaymath, mathlines]{lineno}

\linespread{1}
\usepackage{array}
\usepackage{multirow}
\usepackage{wrapfig}

\usepackage[all,cmtip]{xy}
\usepackage[normalem]{ulem}
\usepackage{xcolor}
\newcommand\redsout{\bgroup\markoverwith{\textcolor{red}{\rule[0.5ex]{2pt}{1pt}}}\ULon}
\newcommand\bluesout{\bgroup\markoverwith{\textcolor{blue}{\rule[0.5ex]{2pt}{1pt}}}\ULon}

\date{\today}

\newcommand{\rhobar}{\overline{\rho}}
\newcommand{\Zp}{{\mathbf Z}_p}
\newcommand{\Qp}{\Q_p}

\newcommand{\Fpbar}{\overline{\F}_p}

\renewcommand{\ds}{\displaystyle}
\renewcommand{\d}{d}
\newcommand{\dn}{d^{\new}}
\newcommand{\dt}{d_t}
\newcommand{\dnt}{d_t^{\new}}
\newcommand{\ddp}{d_p}
\newcommand{\ddpt}{d_{p,t}}
\newcommand{\Cd}{A}
\newcommand{\Cdn}{B}

\newcommand{\DD}{D}

\newcommand{\Pd}{P_d}
\newcommand{\Qd}{Q_d}
\newcommand{\Pdt}{P_{d_t}}
\newcommand{\Qdt}{Q_{d_t}}
\newcommand{\Pdn}{P_{d^{\new}}}
\newcommand{\Qdn}{Q_{d^{\new}}}
\newcommand{\Pddn}{P_{d+d^{\new}}}
\newcommand{\Qddn}{Q_{d+d^{\new}}}
\newcommand{\Pdp}{P_{d_p}}

\newcommand{\Qdpt}{Q_{\ddpt}}

\newcommand{\Qdp}{Q_{d_p}}
\newcommand{\Qplus}{Q^+}
\newcommand{\Qminus}{Q^-}
\newcommand{\Qf}{Q}
\newcommand{\Qfr}{Q_r}

\DeclareMathOperator{\HZ}{HZ}
\DeclareMathOperator{\LZ}{LZ}
\DeclareMathOperator{\chr}{char}

\newcommand{\tv}[1]{~\text{#1}~}
\newcommand{\rbar}{\overline{r}}

\renewcommand{\and}{\quad\text{and}\quad }

\newenvironment{smallpmatrix}
  {\left(\begin{smallmatrix}}
  {\end{smallmatrix}\right)}

\title{Slopes of modular forms and the ghost conjecture, II}

\author{John Bergdall and Robert Pollack}

\address{John Bergdall\\Department of Mathematics\\ Michigan State University \\ 619 Red Cedar Road \\ East Lansing, MI 48824\\USA}
\email{bergdall.jf@gmail.com}
\urladdr{http://users.math.msu.edu/users/bergdall}

\address{Robert Pollack\\Department of Mathematics and Statistics \\ Boston University \\ 111 Cummington Mall \\ Boston, MA 02215\\USA}
\email{rpollack@math.bu.edu}
\urladdr{http://math.bu.edu/people/rpollack}

\subjclass[2000]{11F33 (11F85)}

\begin{document}
\begin{abstract}
In a previous article, we constructed an entire power series over $p$-adic weight space (the `ghost series') and conjectured, in the $\Gamma_0(N)$-regular case, that this series encodes the slopes of overconvergent modular forms of any $p$-adic weight.  In this paper, we construct `abstract ghost series' which can be associated to various natural subspaces of overconvergent modular forms.  This abstraction allows us to generalize our conjecture to, for example, the case of slopes of overconvergent modular forms with a fixed residual representation that is locally reducible at $p$.  Ample numerical evidence is given for this new conjecture.  Further, we prove that the slopes computed by any abstract ghost series satisfy a distributional result at classical weights (consistent with conjectures of Gouv\^ea) while the slopes form unions of arithmetic progressions at all weights not in $\Z_p$.
\end{abstract}

\maketitle
\setcounter{tocdepth}{1}

\section{Introduction}

In previous work (\cite{BergdallPollack-GhostPaperShort}) we formulated a conjecture, called the ghost conjecture, on the slopes of overconvergent $p$-adic cuspforms. The form of the conjecture is the following. For a fixed prime $p$ and an integer $N \geq 1$, not divisible by $p$, we explicitly define a power series $G(w,t)$ in $\Zp[[w,t]]$ that we call the `ghost series' and which we view as a series in $t$ with coefficients lying in the ring of functions on a fixed component of the $p$-adic weight space. When $p$ is $\Gamma_0(N)$-regular (as defined by Buzzard in  \cite{Buzzard-SlopeQuestions}), the ghost conjecture asserts that the Newton polygon of the specialization of $G$ to a weight $\kappa$ is the same as the Newton polygon of the $U_p$-operator acting on overconvergent $p$-adic cuspforms of level $\Gamma_0(N)$ and weight $\kappa$. Perhaps surprisingly, the construction of $G$ is fairly simple and certainly elementary to describe. It essentially depends only on the dimensions $d_k := \dim S_k(\Gamma_0(N))$ and $d_k^{\new} := \dim S_k(\Gamma_0(Np))^{p-\new}$ of classical spaces of cuspforms as $k$ varies. Thus the construction is amenable to abstraction.

The purpose of this article is twofold. First, we seek to define an `abstract ghost series' depending just on dimension-like functions $d$ and $d^{\new}$. Second, we aim to re-specialize our abstraction in order to formulate a ghost conjecture we believe valid for a fixed modular mod $p$ Galois representation which is further assumed to be reducible upon restriction to a decomposition group at $p$ (the Galois-theoretic interpretation of `$\Gamma_0(N)$-regular').

This abstraction allows us to study consequences of the ghost conjecture in one fell swoop. For instance, the data underlying the definition of an abstract ghost series allows for a natural interpretation of `classical slopes.' In Section \ref{sec:dist}, we prove that these classical slopes satisfy a distribution law that specializes to the `Gouv\^ea distribution' when the ghost series is as in \cite{BergdallPollack-GhostPaperShort} or is attached to a fixed mod $p$ Galois representation $\rhobar$. We further explore the combinatorics of what we called `global halos' in \cite{BergdallPollack-GhostPaperShort} (generalizing Coleman's conjectural `spectral halos'). As intimated in previous work, the presence of halos is a structural feature of the ghost series. We prove, in Section \ref{sec:progressions}, that the slopes of the halos form finite unions of arithmetic progressions. The persistence of this behavior $\rhobar$-by-$\rhobar$ is neither completely surprising, nor does it follow from any of the partial results (e.g.\ \cite{BergdallPollack-FredholmSlopes,LiuXiaoWan-IntegralEigencurves}) on the actual spectral halos.

The proofs of the results described in the previous paragraph provides proofs of the unjustified statements found in \cite{BergdallPollack-GhostPaperShort}. We refer the reader to our previous article for a more robust discussion about slopes of modular forms, historical notes, and what is currently known about the ghost conjecture. 

We end by emphasizing that just as our previous work seems to be a generalization of a conjecture of Buzzard (\cite{Buzzard-SlopeQuestions}), the $\rhobar$-ghost conjecture discussed at the end of this article likely generalizes the conjecture in Clay's Ph.D.\ thesis (\cite{Clay-Slopes}).

The first half of the article (Sections \ref{sec:abstract-series} through \ref{sec:progressions}) is comprised of studying the abstract ghost series. In order to prove the results we are after, we assume a number of reasonable axioms that will be clarified in Section \ref{sec:abstract-series}. In Section \ref{sec:full-space}, we verify that these axioms are satisfied in the setting of \cite{BergdallPollack-GhostPaperShort} and in Section \ref{sec:rhobar-space}, we treat the case of a fixed $\rhobar$. Finally, in Section \ref{sec:conj} we state the $\rhobar$-ghost conjecture and give the computational evidence we have compiled thus far.

\subsection*{Notations}
Let $p$ be a prime number and write $q=p$ if $p$ is odd and $q = 4$ if $p=2$.  Set $\delta=p-1$ if $p$ is odd and $\delta = 2$ if $p=2$, so $\delta$ is the size of the torsion subgroup of $\Z_p^\times$. We will also use $v_p(-)$ to denote a $p$-adic valuation (on the $p$-adic complex numbers $\C_p$, say) normalized so $v_p(p) = 1$. If $P(t) = \sum_{i\geq 0} a_i t^i$ its a power series over $\C_p$, its Newton polygon $\NP(P)$ is the lower convex hull of the set of points $(i,v_p(a_i))$ in the standard $xy$-plane.

\subsection*{Acknowledgements}
We thank Frank Calegari and Toby Gee for pointing out that the method of Ash and Stevens \cite{AshStevens-Duke} can be used to explicitly write down dimension formulas for spaces of modular forms with a fixed residual representation. We also thank Liang Xiao for many helpful discussions related to the ghost conjecture.  We further thank the anonymous referees for several very helpful comments. Both authors finally thank the Max Planck Institute for Mathematics in Bonn for providing a stimulating environment during the period when much of this paper was written as well as for our use of their computer servers on which the computations of this paper were carried out.  The first author was supported by NFS grant DMS-1402005 and the second author was supported by NFS grants DMS-1303302 and DMS-1702178 as well as a fellowship from the Simons Foundation.

\section{Abstract ghost series}\label{sec:abstract-series}
The data needed for an abstract ghost series is two functions $\d,\dn: \Z \to \Z$ and an integer $k_0$ with $0 \leq k_0 < \delta$.  For notation, if $n$ is an integer then set $k_n = k_0 + n\delta$ and also define $\ddp = 2\d + \dn$. By the end of this section, we will make three assumptions \eqref{growth}, \eqref{LG}, and \eqref{ND} on $\d$ and $\dn$. (In Section \ref{sec:progressions} we introduce a fourth axiom \eqref{QL} which implies \eqref{growth} and \eqref{LG}.) We first assume:
\begin{equation}
\label{growth}
\tag{G}
\lim_{n \goto \infty} \d(n) = \infty \quad \text{and} \quad
\lim_{n \goto -\infty} \d(n) +\dn(n) = -\infty.
\end{equation}
The label \eqref{growth} indicates the word `growth.' 

There are two standard examples we have in mind. We will repeatedly specialize to these examples throughout, so we have visually separated out the commentary as follows.  Further discussion, and the proofs of any unproven assertions, can be found in Sections \ref{sec:full-space} and \ref{sec:rhobar-space}.

\begin{sex-full}
Let $N \geq 1$ be an integer and $p$ a prime such that $p \ndvd N$. If $k \geq 4$ and even, we set $D_k = \dim S_{k}(\Gamma_0(N))$ and $D_k^{\new} = \dim S_{k}(\Gamma_0(Np))^{p-\new}$. As is well-known, there are explicit combinatorial formulas for $D_k$ and $D_k^{\new}$ in terms of $k$ (with constants depending on $N$ and $p$). Fix $k_0$ even. Then, define $d(n) = D_{k_n}$ and $\dn(n) = D_{k_n}^{\new}$ when $k_n \geq 4$. If $k_n < 4$ and even, we define $d(n)$ and $\dn(n)$ by evaluating the combinatorial formulas at $k = k_n$. (These are thus not exactly dimensions of spaces of cuspforms).  It is clear that \eqref{growth} is satisfied. The function $d_p$ models the dimensions of forms of level $\Gamma_0(Np)$. \underline{Note}:\ We will be forced to assume that $pN > 3$ eventually, only excluding $(p,N) = (2,1)$ and $(3,1)$.
\end{sex-full}

\begin{sex-rhobar}
Assume that $N$ and $p$ are as in the previous example. Further assume that $\rhobar$ is a semi-simple and continuous representation $\rhobar: \Gal(\bar \Q/\Q) \rightarrow \GL_2(\Fpbar)$ such that the $\rhobar$-component $S_k(\Gamma_1(N))_{\rhobar}$ of $S_k(\Gamma_1(N))$ is non-zero. We will define $\d(n) = \dim S_{k_n}(\Gamma_1(N))_{\rhobar}$ and $\dn(n) = \dim S_{k_n}(\Gamma_1(N)\cap \Gamma_0(p))_{\rhobar}^{p-\new}$ when $k_n \geq 2$ and even. In Section \ref{subsec:rhobar-spaces}, we explain how to naturally extend $\d$ and $\dn$ to functions on all integers. \underline{Note}:\ We will always assume that $p \geq 5$ in this example, and we will also omit $\rhobar$ that are cyclotomic twists of $1 \oplus \omega$, $\omega$ being the mod $p$ cyclotomic character.
\end{sex-rhobar}

We now seek to define the abstract ghost series
\begin{equation}\label{eqn:ghost-eqn}
G_{\d,\dn,k_0}(w,t) = G(w,t) = \sum_{i=0}^\infty g_i(w)t^i
\end{equation}
attached to the data $\d$, $\dn$, and $k_0$. The variable $w$ is a coordinate on the $k_0$-th component $\mathscr W_{k_0}$ of the $p$-adic weight space. Recall this means that $\mathscr W_{k_0}$ is the rigid analytic space of continuous $p$-adic valued characters on $\Zp^\times$ whose action on the torsion in $\Zp^\times$ is raising to the $k_0$-th power; each integer $k_n$ defines a point $k_n \in \mathscr W_{k_0}$ given by the character $z \mapsto z^{k_n}$. 

The space $\mathscr W_{k_0}$ is seen to be an open unit disc by fixing a topological generator $\gamma$ for $1 + 2p \Zp$ and then providing $\mathscr W_{k_0}$ with the coordinate $w_\kappa = \kappa(\gamma)-1 \in \{v_p(w) > 0\}$, except in the case $p=2$ and $k_0 = 1$. In that case, we define $w_\kappa = \kappa(\gamma)\gamma^{-1} - 1$. In every case, $w_\kappa$ depends on $\gamma$ only up to isometry. If $p$ is odd, then $v_p(w_{k}) \geq 1$ for all integer weights $k\in\mathscr W_{k_0}$; the purpose of the normalization when $p=2$ is that $v_2(w_{k}) \geq 3$ for all $k$, also.

\begin{remark}\label{remark:w-to-k-change}
We note that $v_p(w_k-w_k') = 1 + v_p(2) + v_p(k-k')$ when $k,k' \in \mathscr W_{k_0}$.
\end{remark}

Now we (uniquely) define the coefficients $g_i(w)$ in \eqref{eqn:ghost-eqn}  by three  conditions. First, they are functions on $\mathscr W_{k_0}$ given by monic polynomials in $w$. Second, the zeroes of $g_i(w)$ are exactly those $w_{k_n}$ such that 
$$
\d(n) < i < \d(n) + \dn(n).
$$
Condition \eqref{growth} guarantees that each $g_i(w)$ has only finitely many zeroes. 

Finally, the multiplicity $m_i(k_n)$ of $w_{k_n}$ as a zero of $g_i(w)$ is given by
\begin{equation}\label{eqn:mult-pattern}
m_i(k_n) = \begin{cases} 
\ds i-\d(n) & \ds \text{for~} \d(n) < i \leq \d(n) + \frac{\dn(n)}{2};\\
\ds \d(n) + \dn(n)-i & \ds \text{for~}  \d(n) + \frac{\dn(n)}{2} \leq i < \d(n) + \dn(n).
\end{cases}
\end{equation}
That is, at the first and last index $i$ such that $g_i(w_{k_n}) = 0$, $w_{k_n}$ is a zero of order 1; at the second and second to last index, it vanishes to order 2; and so on.

\begin{definition}\label{defn:ghost-series}
The ghost series associated with $\d$, $\dn$, and $k_0$ is the series $G(w,t) = \sum_{i=0}^\infty g_i(w)t^i \in \Zp[w][[t]]$.
\end{definition}

If $\kappa$ is a $p$-adic weight lying in $\mathscr W_{k_0}$ then we will write $G_\kappa(t) = G(w_\kappa,t) \in \C_p[[t]]$. Being a $p$-adic power series in one variable, $G_\kappa$ has a Newton polygon that we denote by $\NP(G_\kappa)$.

\begin{sex-full}
The ghost series just defined is trivially almost the same as the $p$-adic ghost series of tame level $N$ defined in \cite[Definition 2.1]{BergdallPollack-GhostPaperShort}. A difference only occurs if $k_0=2$ and it has no effect on Newton polygons. Let us be more precise. 

Consider $k_0 = 2$, $\d(n) = \dim S_{k_n}(\Gamma_0(N))$ for $k_n \geq 4$ and then extend to $k_n < 4$ by explicit formulas (similarly for $\dn$). Then, $\dim S_2(\Gamma_0(N)) = d(0) + 1$ and $\dim S_2(\Gamma_0(Np))^{p-\new} = \dn(0) - 1$. When $p\neq 2$, it is straightforward to check the second equality in:
\begin{align*}
d(1) &= \dim S_{2 + (p-1)}(\Gamma_0(N))\\
 &= \dim S_2(\Gamma_0(N)) + \dim S_{2}(\Gamma_0(Np))^{p-\new}\\
 &= \d(0) + \dn(0).
\end{align*}
This shows that the ghost coefficient $g_{d(1)}$ defined either as above or as in \cite{BergdallPollack-GhostPaperShort} is the trivial function $1$. So, the `ghost zero' $w_{k_n} = w_2$ plays no role in calculating the slopes of the Newton polygons in either this article or our previous one. (We do not address $p=2$; the definition in \cite[Section 5]{BergdallPollack-GhostPaperShort} is more complicated.)
\end{sex-full}

The rest of this section is comprised of an analysis of the coefficients $g_i$ and our further assumptions on $\d$ and $\dn$.  We now make our second assumption:
\begin{equation}
\label{ND}
\tag{ND}
 \d, ~\d+\dn, \text{~and~} \ddp \text{~are~non-decreasing~functions}.
\end{equation}
The label \eqref{ND} refers to `non-decreasing'. To check \eqref{ND}, it is sufficient that $d$ and $d^{\new}$ are non-decreasing, but the condition as given is easier to check in practice.

\begin{sex-full}
\eqref{ND} is true when $pN > 3$. For instance, $\d$ and $\ddp$ are non-decreasing as long as there exists a non-zero modular form of weight $\delta$ and level $\Gamma_0(N)$.
\end{sex-full}

\begin{sex-rhobar}
We will verify \eqref{ND} as long as $p \geq 5$.
\end{sex-rhobar}

By \eqref{growth}, the set of integers $n$ such that $d(n) < i$ is non-empty and bounded above. So, we define
$$
\HZ(g_i) = \sup\{ n \in \Z \st \d(n) < i \}.
$$
Similarly, 
$$
\LZ(g_i) = \inf \{ n \in \Z \st  i < \d(n) + \dn(n) \}.
$$
Note that both $\HZ(g_i)$ and $\LZ(g_i)$ are well-defined by \eqref{growth}.
The notation $\HZ$ and $\LZ$ refers to `highest zero' and `lowest zero'. The notation is justified by the next lemma (which requires our assumption \eqref{ND}).

\begin{lemma}
\label{lemma:gi}
If $i\geq 0$, then 
$$
\{ n \in \Z \st g_i(w_{k_n})=0 \} =  \{\LZ(g_i), \LZ(g_i) +1, \dots, \HZ(g_i)\}.
$$
\end{lemma}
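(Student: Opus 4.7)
The plan is to unpack the definition of the zeros of $g_i$ and then invoke the non-decreasing assumption \eqref{ND} to show that each of the two defining inequalities $\d(n)<i$ and $i<\d(n)+\dn(n)$ carves out a half-line in $\Z$; the desired equality is then just the intersection of two half-lines. By the construction of $g_i$ in Definition~\ref{defn:ghost-series}, the map $n\mapsto w_{k_n}$ is injective on $\Z$, and $g_i(w_{k_n})=0$ if and only if $n$ lies in the set
\[
A\cap B, \quad \text{where } A := \{n\in\Z : \d(n)<i\} \text{ and } B:=\{n\in\Z : i<\d(n)+\dn(n)\}.
\]

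For $A$: since $\d$ is non-decreasing by \eqref{ND}, the set $A$ is downward closed, because if $d(n)<i$ and $m\leq n$ then $d(m)\leq d(n)<i$. Since $\d(n)\to\infty$ as $n\to\infty$ by \eqref{growth}, the set $A$ is bounded above, and (if non-empty) its supremum is attained, yielding $A = \{n\in\Z : n\leq \HZ(g_i)\}$. The analogous argument applies to $B$: \eqref{ND} gives that $\d+\dn$ is non-decreasing, so $B$ is upward closed; and \eqref{growth} says $\d(n)+\dn(n)\to-\infty$ as $n\to-\infty$, so $B$ is bounded below, its infimum is attained (non-empty case), and $B=\{n\in\Z : n\geq \LZ(g_i)\}$.

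Intersecting gives $A\cap B = \{\LZ(g_i),\LZ(g_i)+1,\dots,\HZ(g_i)\}$, which is the claim. The edge case where $g_i$ has no zeros is handled by interpreting $\sup\emptyset=-\infty$ and $\inf\emptyset=+\infty$ in the definitions of $\HZ$ and $\LZ$, making the right-hand side empty as well. There is no substantial obstacle here; the content of the lemma is precisely the observation that a strict inequality against a non-decreasing integer-valued function on $\Z$ picks out a half-line, so the proof amounts to verifying closure and boundedness of the two cutting sets.
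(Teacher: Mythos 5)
Your proof is correct and takes essentially the same approach as the paper: use \eqref{ND} to observe that $\{n : \d(n)<i\}$ is downward closed and $\{n : i < \d(n)+\dn(n)\}$ is upward closed, then intersect. (The only cosmetic difference is your discussion of the empty case; the paper sidesteps it by noting just before the lemma that $\HZ(g_i)$ and $\LZ(g_i)$ are well-defined, i.e.\ both cutting sets are non-empty, by \eqref{growth}.)
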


\begin{proof}
Under \eqref{ND}, it is clear that $\{n \in \Z \st  i < d(n) + d(n)^{\new}\} = \{\LZ(g_i), \LZ(g_i)+1,\dotsc\}$. Similarly, $\{n \in \Z \st d(n) < i\} = \{\dotsc, \HZ(g_i)-1, \HZ(g_i)\}$. The lemma follows immediately.
\end{proof}

Now we define $\ds \Delta_i = \frac{g_i}{g_{i-1}}$ and write $\ds \Delta_i = \frac{\Delta_i^+}{\Delta_i^-}$ where $\Delta_i^{\pm} \in \Z_p[w]$ are monic and co-prime.

\begin{lemma}
\label{lemma:del}
\leavevmode
\begin{enumerate}
\item The zeroes of $\Delta_i^\pm$ are simple.
\item $w_{k_n}$ is a zero of $\Delta_i^+$ if and only if $\d(n) < i \leq \d(n) + {1\over 2} \dn(n)$.
\item $w_{k_n}$ is a zero of $\Delta_i^-$ if and only if $\d(n) + {1\over 2}\dn(n) +1 \leq  i  \leq  \d(n) + \dn(n)$.
\end{enumerate}
\end{lemma}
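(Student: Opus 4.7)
The plan is to unwind the definitions and reduce everything to a bookkeeping claim about the multiplicities $m_i(k_n)$ of the zero $w_{k_n}$ in $g_i$, extended by $m_i(k_n)=0$ whenever the index $i$ lies outside the interval $(d(n), d(n)+d^{\new}(n))$. Since each $g_i$ is a monic polynomial and the zeros $w_{k_n}$ are pairwise distinct (they lie on distinct integer weights), we may write
\[
g_i = \prod_n (w - w_{k_n})^{m_i(k_n)}.
\]
The rational function $\Delta_i = g_i/g_{i-1}$ then has the coprime monic decomposition
\[
\Delta_i^+ = \prod_n (w-w_{k_n})^{\max(m_i(k_n) - m_{i-1}(k_n),\,0)},\qquad
\Delta_i^- = \prod_n (w-w_{k_n})^{\max(m_{i-1}(k_n) - m_i(k_n),\,0)}.
\]
So it is enough to control the integer $\delta_i(n) := m_i(k_n) - m_{i-1}(k_n)$ at each fixed $n$: assertion (1) is the statement $|\delta_i(n)|\leq 1$, while (2) and (3) identify the indices $i$ where $\delta_i(n)=+1$ and $\delta_i(n)=-1$ respectively.

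Fix $n$ and view $i\mapsto m_i(k_n)$ as a function of $i$. By \eqref{eqn:mult-pattern} it is the piecewise-linear "tent" function: zero for $i\leq d(n)$, increasing by $1$ per unit on the range $d(n)<i\leq d(n)+d^{\new}(n)/2$, decreasing by $1$ per unit on $d(n)+d^{\new}(n)/2\leq i<d(n)+d^{\new}(n)$, and zero again for $i\geq d(n)+d^{\new}(n)$. Consecutive differences of such a function are clearly in $\{-1,0,+1\}$, which proves (1). The bulk of the proof is then the casework that places the $+1$'s and $-1$'s at the claimed indices.

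I would split on the parity of $d^{\new}(n)$. When $d^{\new}(n)$ is even, the peak of the tent is attained at the unique integer $i=d(n)+d^{\new}(n)/2$; one checks directly that $\delta_i(n)=+1$ for $d(n)<i\leq d(n)+d^{\new}(n)/2$ and $\delta_i(n)=-1$ for $d(n)+d^{\new}(n)/2+1\leq i\leq d(n)+d^{\new}(n)$ (the right-hand endpoint uses $m_{d(n)+d^{\new}(n)}(k_n)=0$ and $m_{d(n)+d^{\new}(n)-1}(k_n)=1$). When $d^{\new}(n)$ is odd, the peak value $(d^{\new}(n)-1)/2$ is attained at the two consecutive integers $i=d(n)+(d^{\new}(n)\mp1)/2$, so $\delta_i(n)=0$ at the transition index $i = d(n)+(d^{\new}(n)+1)/2$, and one again reads off that $\delta_i(n)=+1$ precisely for $d(n)<i\leq d(n)+(d^{\new}(n)-1)/2 = \lfloor d(n)+d^{\new}(n)/2\rfloor$ and $\delta_i(n)=-1$ precisely for $d(n)+(d^{\new}(n)+3)/2\leq i\leq d(n)+d^{\new}(n)$; rewriting the boundary conditions with half-integer inequalities recovers $d(n)<i\leq d(n)+\tfrac{1}{2}d^{\new}(n)$ and $d(n)+\tfrac{1}{2}d^{\new}(n)+1\leq i\leq d(n)+d^{\new}(n)$ exactly as stated.

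Nothing deep is happening here; the only real pitfall is keeping the boundary indices straight in the odd case (where the two inequalities in \eqref{eqn:mult-pattern} overlap at a half-integer rather than at an integer), so I would handle the even and odd cases separately to avoid sign errors. Note that \eqref{ND} is not strictly needed for this lemma—it is used earlier to ensure the zero set of $g_i$ is an interval in $n$—but it is harmless to invoke it as the preceding Lemma \ref{lemma:gi} already relies on it.
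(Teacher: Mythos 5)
Your proof is correct and follows the same route the paper uses: deduce part (a) from the observation that the tent-shaped multiplicity pattern \eqref{eqn:mult-pattern} makes consecutive multiplicities differ by at most one, then read off the indices where the difference is $\pm 1$ to get (b) and (c). You have simply filled in the parity casework that the paper compresses into "the remaining two parts follow easily," and your side remark that \eqref{ND} is not actually used in this lemma is accurate.
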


\begin{proof}
The definition \eqref{eqn:mult-pattern} of the multiplicity pattern implies that the multiplicity of $w_{k_n}$ as a root of $g_{i}$ is one more, one less, or equal to the multiplicity of $w_{k_n}$ as a root of $g_{i-1}$. This proves part (a). The remaining two parts follow easily.
\end{proof}

The notation $\HZ$ and $\LZ$ naturally generalizes as follows:
\begin{align*}
\HZ(\Delta_i^+) &= \sup \{ n \in \Z : \d(n) < i\};\\  \LZ(\Delta_i^+) &= \inf \{ n \in \Z : i \leq  \d(n) + \frac{\dn(n)}{2}  \};\\
\HZ(\Delta_i^-) &= \sup \{ n \in \Z : \d(n) + \frac{\dn(n)}{2} + 1  \leq  i  \};\\
\LZ(\Delta_i^-) &= \inf \{ n \in \Z : i < \d(n) + \dn(n) \}.\nonumber
\end{align*}
These quantities are well-defined by \eqref{growth} and \eqref{ND}. If $g(w) \in \Zp[w]$ is monic and  its roots are in $v_p(w) > 0$, write $\lambda(g) = \deg(g)$. We extend $\lambda$ to the quotient of monic polynomials in the natural way.
\begin{lemma}
\label{lemma:describe_delta}
If $i \geq 0$, then
$$
\{ n \in \Z \st  \Delta_i^{\pm}(w_{k_n})=0 \} =  \{\LZ(\Delta_i^{\pm}), \LZ(\Delta_i^{\pm}) +1, \dotsc, \HZ(\Delta_i^{\pm})\},
$$
and $\lambda(\Delta_i^\pm) = \HZ(\Delta_i^\pm) - \LZ(\Delta_i^\pm) + 1$.
\end{lemma}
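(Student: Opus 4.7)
The plan is to mirror the proof of Lemma \ref{lemma:gi}, applied separately to $\Delta_i^+$ and $\Delta_i^-$, using the explicit characterizations from Lemma \ref{lemma:del}(b),(c) together with the monotonicity from \eqref{ND}. The key observation is that each condition describing the zero set is the intersection of two half-infinite conditions, each of which is monotone in $n$.

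For $\Delta_i^+$, the condition $\d(n) < i \leq \d(n) + \tfrac{1}{2}\dn(n)$ from Lemma \ref{lemma:del}(b) is equivalent to the pair $\d(n) < i$ and $2i \leq 2\d(n) + \dn(n) = \ddp(n)$. Since $\d$ is non-decreasing by \eqref{ND}, the set $\{n : \d(n) < i\}$ is downward-closed, hence equals $\{\dotsc, \HZ(\Delta_i^+) - 1, \HZ(\Delta_i^+)\}$ by definition; since $\ddp$ is non-decreasing, the set $\{n : 2i \leq \ddp(n)\}$ is upward-closed, hence equals $\{\LZ(\Delta_i^+), \LZ(\Delta_i^+) + 1, \dotsc\}$. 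Their intersection is precisely the interval $\{\LZ(\Delta_i^+), \dotsc, \HZ(\Delta_i^+)\}$ (possibly empty, in which case $\Delta_i^+ = 1$).

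For $\Delta_i^-$, the condition $\d(n) + \tfrac{1}{2}\dn(n) + 1 \leq i \leq \d(n) + \dn(n)$ from Lemma \ref{lemma:del}(c) rewrites as $\ddp(n) \leq 2i - 2$ and $i \leq \d(n) + \dn(n)$. Applying \eqref{ND} to $\ddp$ and to $\d + \dn$, respectively, the first set is downward-closed with supremum $\HZ(\Delta_i^-)$ and the second is upward-closed with infimum $\LZ(\Delta_i^-)$; intersecting gives the claimed interval. Finally, by Lemma \ref{lemma:del}(a) every zero of $\Delta_i^\pm$ is simple, so $\deg \Delta_i^\pm$ equals the cardinality of its zero set, which is $\HZ(\Delta_i^\pm) - \LZ(\Delta_i^\pm) + 1$ (with the convention that this is zero when the interval is empty).

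There is no real obstacle here; the content of the lemma is just a bookkeeping consequence of \eqref{ND} together with Lemma \ref{lemma:del}. The only mild point to be careful about is formulating the two inequalities defining the zero sets in terms of the functions that \eqref{ND} actually asserts are non-decreasing (namely $\d$, $\d + \dn$, and $\ddp$), since $\dn$ itself need not be monotone.
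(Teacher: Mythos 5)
Your proof is correct and takes essentially the same approach as the paper's, which simply invokes the argument from Lemma \ref{lemma:gi} together with the observation (which you also make) that $\tfrac{1}{2}\ddp = \d+\tfrac{1}{2}\dn$ is non-decreasing under \eqref{ND}. One small remark: for $\Delta_i^-$ you correctly work from the condition $i \leq \d(n)+\dn(n)$ coming out of Lemma \ref{lemma:del}(c), which is $\inf\{n : i-1 < \d(n)+\dn(n)\}$; the paper's displayed definition of $\LZ(\Delta_i^-)$ uses the strict inequality $i < \d(n)+\dn(n)$, and these can differ when $\d(n)+\dn(n)=i$, so your rewriting silently corrects an apparent off-by-one in that formula.
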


\begin{proof}
The first claim follows as in Lemma \ref{lemma:gi} (note that \eqref{ND} implies that $\frac{1}{2} \ddp = \d + {1\over 2} \dn$ is non-decreasing).  The second claim then follows from  Lemma \ref{lemma:del}(a).
\end{proof}

To ensure that the slopes on $\NP(G_\kappa)$ appear with finite multiplicity for each $\kappa$, we now impose our third condition on $d$ and $d^{\new}$:
\begin{equation}
\label{LG}
\tag{LG}
\d(n) \sim  \Cd n \quad\text{and}\quad \dn(n) \sim \Cdn n \quad\quad\quad\quad (A,B > 0).
\end{equation}
Here and below, if $F(n)$ and $G(n)$ are two functions defined on integers $n \gg 0$ we use possibly non-standard notation and say $F(n) \sim G(n)$ if $F(n) = G(n) + O(1)$.
The label \eqref{LG} refers to `linear growth'.

\begin{sex-full}
\eqref{LG} is satisfied with $A = {\delta \over 12}\mu_0(N)$ and $B = {\delta \over 12}(p-1)\mu_0(N)$ where $\mu_0(N)$ is the index of $\Gamma_0(N)$ inside $\SL_2(\Z)$. Note that $B = (p-1)A$.
\end{sex-full}

\begin{sex-rhobar}
\eqref{LG} holds with values $A,B > 0$ such that $B = (p-1)A$, so the relationship between $A$ and $B$ is the same as in the other example.
\end{sex-rhobar}
The asymptotic behaviors of the quantities above is as follows.
\begin{lemma}
\label{lemma:lambda_asymp}
We have:
\begin{enumerate}
\item $\ds\HZ(\Delta_i^+) \sim {1\over \Cd} \cdot i$;
\item $\ds \LZ(\Delta_i^+) \sim \HZ(\Delta_i^-) \sim {2 \over 2\Cd + \Cdn} \cdot i$;
\item $\ds \LZ(\Delta_i^-) \sim {1\over \Cd+\Cdn} \cdot i$;
\item $\ds \lambda(\Delta_i^+) \sim \frac{\Cdn}{\Cd(2\Cd+\Cdn)} \cdot i$;
\item $\ds \lambda(\Delta_i^-) \sim \frac{\Cdn}{(2\Cd+\Cdn)(\Cd+\Cdn)} \cdot i$;
\item $\ds \lambda(\Delta_i) \sim \frac{\Cdn^2}{\Cd(\Cd+\Cdn)(2\Cd+\Cdn)} \cdot i$.
\end{enumerate}
\end{lemma}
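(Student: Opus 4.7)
The plan is to reduce everything to a single asymptotic-inversion principle. The principle: if $f\colon\Z\to\Z$ is non-decreasing with $f(n) = Cn + O(1)$ for some $C>0$, then as $i\to\infty$ both $\sup\{n : f(n) < i\}$ and $\inf\{n : f(n) \geq i\}$ are $\sim i/C$. This is a one-line calculation: if $n_0 = \sup\{n : f(n) < i\}$, then $f(n_0) < i \leq f(n_0+1)$, so substituting $f(n) = Cn + O(1)$ yields $n_0 = i/C + O(1)$; the infimum case is identical.

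Parts (a)--(c) are then direct applications. For (a), take $f = d$ and $C = A$, which is valid by \eqref{ND} and \eqref{LG}. For (c), take $f = d + \dn$ and $C = A + B$ (note $i < d(n) + \dn(n)$ iff $i+1 \leq d(n) + \dn(n)$ since all values are integers). For (b), I rewrite both conditions in terms of $\ddp = 2d + \dn$, which is non-decreasing by \eqref{ND} with $\ddp(n) = (2A+B)n + O(1)$: the condition $i \leq d(n) + \tfrac{1}{2}\dn(n)$ becomes $2i \leq \ddp(n)$, and $d(n) + \tfrac{1}{2}\dn(n) + 1 \leq i$ becomes $\ddp(n) \leq 2i - 2$. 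Applying the principle to $\ddp$ at the points $2i$ and $2i-1$ then gives $\LZ(\Delta_i^+) \sim \HZ(\Delta_i^-) \sim 2i/(2A+B)$.

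For (d) and (e), I combine (a)--(c) with Lemma \ref{lemma:describe_delta}'s formula $\lambda(\Delta_i^\pm) = \HZ(\Delta_i^\pm) - \LZ(\Delta_i^\pm) + 1$ to obtain
\[
\lambda(\Delta_i^+) \sim \Bigl(\tfrac{1}{A} - \tfrac{2}{2A+B}\Bigr) i = \tfrac{B}{A(2A+B)}\, i, \qquad \lambda(\Delta_i^-) \sim \Bigl(\tfrac{2}{2A+B} - \tfrac{1}{A+B}\Bigr) i = \tfrac{B}{(2A+B)(A+B)}\, i.
\]
For (f), the natural extension of $\lambda$ to a quotient is $\lambda(f/g) = \deg f - \deg g$, so $\lambda(\Delta_i) = \lambda(\Delta_i^+) - \lambda(\Delta_i^-)$; a short arithmetic simplification over the common denominator $A(A+B)(2A+B)$ yields the claimed $\tfrac{B^2}{A(A+B)(2A+B)}\, i$. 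No step is genuinely difficult; the only care needed is to invoke \eqref{ND} at each application of the inversion principle, so that each sup/inf is indeed controlled by a monotone function satisfying \eqref{LG}.
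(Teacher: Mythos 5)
Your proof is correct and takes the same route as the paper, which simply states that parts (a)--(c) are immediate from the definitions and \eqref{LG} and that (d)--(f) follow from (a)--(c) together with Lemma \ref{lemma:describe_delta}; your ``inversion principle'' and the rewriting of the half-integer conditions via $\ddp = 2\d + \dn$ just make the ``immediate'' step explicit.
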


\begin{proof}
The first three parts are immediate from the definitions and (\ref{LG}).  The second three parts follow from the first three and Lemma \ref{lemma:describe_delta}.
\end{proof}

\begin{proposition}
\label{prop:entire}
For each $\kappa \in \mathscr W_{k_0}(\C_p)$, $G_\kappa(t) \in \C_p[[t]]$ is an entire series. In particular, each slope of $\NP(G_\kappa)$ appears with finite multiplicity.
\end{proposition}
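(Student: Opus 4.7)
The plan is to show that $v_p(g_i(w_\kappa))/i \to \infty$ as $i \to \infty$ (with the convention $\infty/i = \infty$ when $g_i(w_\kappa) = 0$). This is the standard characterization of an entire power series in $\C_p[[t]]$, and it forces each slope of the Newton polygon to appear with finite multiplicity.

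First I would factor each $g_i$. By Lemma \ref{lemma:gi} together with the multiplicity pattern \eqref{eqn:mult-pattern},
$$
g_i(w) = \prod_{n=\LZ(g_i)}^{\HZ(g_i)} (w - w_{k_n})^{m_i(k_n)},
$$
so $v_p(g_i(w_\kappa)) = \sum_n m_i(k_n)\, v_p(w_\kappa - w_{k_n})$. Next I would establish a positive lower bound on the factors $v_p(w_\kappa - w_{k_n})$ that is uniform in $n$. The normalization of the coordinate on $\mathscr W_{k_0}$ described after Remark \ref{remark:w-to-k-change} ensures $v_p(w_{k_n}) \geq 1$ when $p$ is odd, and $v_p(w_{k_n}) \geq 3$ when $p=2$. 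Since also $v_p(w_\kappa) > 0$ by definition of $\mathscr W_{k_0}$, the ultrametric inequality gives
$$
v_p(w_\kappa - w_{k_n}) \geq \min(v_p(w_\kappa), v_p(w_{k_n})) \geq c_\kappa,
$$
where $c_\kappa := \min(v_p(w_\kappa), 1)$ (respectively $\min(v_p(w_\kappa), 3)$ when $p = 2$) is strictly positive and independent of $n$. This yields
$$
v_p(g_i(w_\kappa)) \geq c_\kappa \cdot \deg(g_i).
$$

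Finally I would observe that $\deg(g_i)$ grows quadratically. Since $\deg(g_i) - \deg(g_{i-1}) = \lambda(\Delta_i^+) - \lambda(\Delta_i^-) = \lambda(\Delta_i)$, Lemma \ref{lemma:lambda_asymp}(6) gives $\lambda(\Delta_i) \sim \frac{B^2}{A(A+B)(2A+B)}\cdot i$, and a telescoping sum then produces $\deg(g_i) \sim \frac{B^2}{2A(A+B)(2A+B)}\cdot i^2$. Dividing by $i$ and invoking the bound from the previous paragraph forces $v_p(g_i(w_\kappa))/i \to \infty$, which is what we wanted. There is no serious obstacle in this argument; the one place to be careful is the ultrametric estimate, and this works precisely because the paper's choice of coordinate $w$ was tuned to guarantee $v_p(w_{k_n}) \geq 1$ (or $\geq 3$) for every integer weight $k_n$, so that the linear lower bound on each factor survives even though $\{w_{k_n}\}$ accumulates inside the weight disc.
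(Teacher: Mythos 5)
Your proof is correct and is essentially the paper's argument, made self-contained: the paper's proof cites \cite[Lemma 2.4]{BergdallPollack-GhostPaperShort} (which packages exactly the step you carry out by hand, namely that the ghost coefficients $g_i$ are monic with roots in $\{v_p(w)>0\}$, so each root contributes valuation bounded below by a positive constant depending only on $\kappa$, whence superlinear degree growth forces entireness) and then invokes Lemma~\ref{lemma:lambda_asymp}(f), just as you do. The only cosmetic difference is that the paper phrases the sufficient condition as $\liminf_i \lambda(\Delta_i)=\infty$, which is weaker than the full asymptotic you telescope, but both deliver $\deg(g_i)/i \to \infty$ and hence the result.
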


\begin{proof}
It suffices to check that $\liminf_i \lambda(\Delta_i) = \infty$ (\cite[Lemma 2.4]{BergdallPollack-GhostPaperShort}) and that is immediate from part (f) of Lemma \ref{lemma:lambda_asymp}.
\end{proof}

\section{Distributions of slopes}\label{sec:dist}
\label{sec:distribution}

We assume the notations of the previous section. In particular, let $G$ be the ghost series attached to the data $\d$, $\dn$ and $k_0$ where $\d$ and $\dn$ satisfy \eqref{growth}, \eqref{ND}, and \eqref{LG}. Especially, we write $A$ and $B$ for the constants in \eqref{LG}. Recall that $k_n = k_0 + n\delta$ and $G_{k_n}(t) = G(w_{k_n},t)$.

\subsection{Statement of results}\label{subsec:distribution-results}
Write $s_1(k_n) \leq s_2(k_n) \leq \dotsb$ for the ordered list of slopes of $\NP(G_{k_n})$.  The following theorem gives an asymptotic description of the $i$-th slope $s_i(k_n)$. We will begin to write $q=p$ for $p$ odd and $q = 4$ if $p=2$.

\begin{theorem}
\label{thm:asymptotic}
\leavevmode
\begin{enumerate}
\item If $i \leq d(n)$, then
\begin{equation*}
s_i(k_n) = \frac{q}{p-1} \cdot \frac{\Cdn^2}{\Cd(\Cd+\Cdn)(2\Cd+\Cdn)} \cdot i + O(\log n).
\end{equation*}
\item If $d(n) < i \leq d(n) + d^{\new}(n)$, then
\begin{equation*}
s_i(k_n) = \frac{p}{(p-1)^2} \cdot \frac{\Cdn^2}{2\Cd(\Cd+\Cdn)} \cdot k_n+ O(\log n).
\end{equation*}
\item  If $i > d(n) + d^{\new}(n)$, then for every $\varepsilon > 0$
\begin{equation*}
s_i(k_n) = {q\over p-1}\cdot {B^2 \over A(A+B)(2A+B)}\cdot  i + O(i^{1/2+\varepsilon}).
\end{equation*}
\end{enumerate}
\end{theorem}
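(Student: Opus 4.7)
The plan rests on two tools: Remark~\ref{remark:w-to-k-change}, which gives $v_p(w_{k_n} - w_{k_m}) = c + v_p(n-m)$ for $n \neq m$ (with $c = 1$ if $p$ is odd and $c = 3$ if $p=2$, so that $c + \tfrac{1}{p-1} = \tfrac{q}{p-1}$ in either case), together with Legendre's estimate $\sum_{j=a}^{b} v_p(j) = \tfrac{b-a+1}{p-1} + O(\log b)$. Combined, these convert any $v_p(g_i(w_{k_n}))$ or $v_p(\Delta_i(w_{k_n}))$ into a closed form in the $\lambda$-functions plus a controlled remainder.

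For parts (a) and (c), the multiplicity pattern~\eqref{eqn:mult-pattern} together with the hypothesis on $i$ ensures that $w_{k_n}$ is a zero of neither $g_i$ nor $g_{i-1}$. Factoring $\Delta_i = \Delta_i^+/\Delta_i^-$ and invoking Lemma~\ref{lemma:describe_delta}, I would compute
\begin{equation*}
v_p(\Delta_i(w_{k_n})) = \sum_{m \in R_i^+}(c + v_p(n-m)) \;-\; \sum_{m \in R_i^-}(c + v_p(n-m)) = \tfrac{q}{p-1}\lambda(\Delta_i) + E_{i,n},
\end{equation*}
where $R_i^\pm = \{\LZ(\Delta_i^\pm),\dots,\HZ(\Delta_i^\pm)\}$, the $c$ and $v_p(n-m)$ contributions combine via the coincidence above, and $E_{i,n}$ absorbs the digit-sum remainder. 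Lemma~\ref{lemma:lambda_asymp}(f) then supplies the stated linear main term in $i$. To pass from $v_p(\Delta_i(w_{k_n}))$ to the Newton polygon slope $s_i(k_n)$, I would check that $i \mapsto v_p(\Delta_i(w_{k_n}))$ is essentially non-decreasing on the relevant range, so that the graph of $v_p(g_i(w_{k_n}))$ already coincides with its own lower convex hull. In case (a), $i \leq \d(n) \sim An$ forces all distances $n-m$ into $[1,n]$, yielding $E_{i,n} = O(\log n)$; in case (c) the ranges shift to integers of size $\sim i$, and a coarser bound on the convex-hull correction gives the stated $O(i^{1/2+\varepsilon})$.

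For part (b), every interior $i \in (\d(n), \d(n)+\dn(n))$ has $w_{k_n}$ as a zero of $g_i$, so $v_p(g_i(w_{k_n})) = +\infty$ and no such $i$ is a vertex of $\NP(G_{k_n})$. Thus the Newton polygon has a single constant slope over $[\d(n), \d(n)+\dn(n)]$ equal to the secant
\begin{equation*}
\frac{v_p\bigl(g_{\d(n)+\dn(n)}(w_{k_n})\bigr) - v_p\bigl(g_{\d(n)}(w_{k_n})\bigr)}{\dn(n)}.
\end{equation*}
Both endpoint valuations are finite (since $w_{k_n}$ is not a root there) and equal $\sum_{m \neq n} m_i(k_m)(c + v_p(n-m))$ over the root indices of the corresponding $g_i$. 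By \eqref{ND} and Lemma~\ref{lemma:lambda_asymp}, the contributing ranges lie approximately in $[An/(A+B),\, n-1]$ for $i = \d(n)$ and in $[n+1,\, (A+B)n/A]$ for $i = \d(n)+\dn(n)$, disjoint and on opposite sides of $n$. Summing the triangular multiplicity pattern~\eqref{eqn:mult-pattern} against $c + v_p(n-m)$ on each range and applying the digit-sum identity yields the claimed leading term; the factor $k_n$ appears because $k_n = k_0 + n\delta$ and the telescoping of the two sides scales linearly with $n$.

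The main obstacle is part (b). The cancellation between the two endpoint sums is intricate, because they involve different index ranges with different triangular multiplicity profiles, and several lower-order terms must cancel exactly to produce the clean coefficient $\tfrac{p}{(p-1)^2}\cdot\tfrac{B^2}{2A(A+B)}$ with error only $O(\log n)$. Verifying this balance---carefully aligning the multiplicity patterns on each side of $n$ with the corresponding digit-sum asymptotics---will be the heart of the argument.
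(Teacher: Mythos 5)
Your $\Delta$-slope estimates are sound and match the paper's Lemma \ref{lemma:general_val} and Proposition \ref{prop:ith_newtonslope}: the identity $c + \tfrac{1}{p-1} = \tfrac{q}{p-1}$ and Legendre's estimate do indeed convert each of $v_p(\Delta_i^\pm(w_{k_n}))$ into $\tfrac{q}{p-1}\lambda(\Delta_i^\pm)$ plus controlled remainders. But the step from $\Delta$-slopes to actual Newton polygon slopes contains two genuine gaps.

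First, for part (b): you claim that because the interior points $d(n) < i < d(n)+d^{\new}(n)$ have $y_i = +\infty$, the slope over the interval $[d(n), d(n)+d^{\new}(n)]$ is automatically the secant. This does not follow. The Newton polygon might have its vertices \emph{outside} this interval --- say at $d(n) - 5$ and $d(n) + d^{\new}(n) + 17$ --- in which case the slope over the middle interval is that of the longer segment, not your secant. You must show that $i = d(n)$ and $i = d(n) + d^{\new}(n)$ are actually breakpoints. The paper does this (Lemmas \ref{lemma:Gbounds}, \ref{lemma:ss_slope}, \ref{lemma:three_part} and Proposition \ref{prop:break}) by proving, for $n \gg 0$, that the $\Delta$-slopes in the range $i \leq d(n)$ sit strictly below the secant's slope while those in $i > d(n)+d^{\new}(n)$ sit strictly above it. Without this, part (b) is not proved.

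Second, for parts (a) and (c): the claim that $i \mapsto v_p(\Delta_i(w_{k_n}))$ is ``essentially non-decreasing'' so the graph of $y_i$ ``coincides with its own lower convex hull'' is false. The $\Delta$-slopes fluctuate by amounts of order $\log n$ (part a) or $\log i$ (part c), so they are certainly not monotone, and the $y_i$ sequence is not its own convex hull. The paper handles this by bounding the gap $N(i,n) - M(i,n)$ between consecutive breakpoints. For $i \leq d(n)$ this gap is $O(\log n)$, by comparing $v_p(\Delta_{M+1}) \geq v_p(\Delta_N)$ with the linear asymptotic and absorbing the errors. For $i > d(n)+d^{\new}(n)$ this requires a separate and non-trivial argument: $y_i = \tfrac{D^2}{2}i^2 + O(i\log i)$ by Stirling, so all Newton points lie in the region between $y = \tfrac{D^2}{2}x^2 \pm Cx\log x$, and the horizontal length of any line segment contained in such a region is $O(M^{1/2+\varepsilon})$ (the paper's Lemma \ref{lemma:region}, a calculus exercise). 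This is where the $i^{1/2+\varepsilon}$ in part (c) actually comes from; the phrase ``a coarser bound on the convex-hull correction'' does not identify this step, and the exponent $1/2+\varepsilon$ will not appear from any naive estimate on the $\Delta$-slope errors themselves, which are only $O(\log i)$.
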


Part (b) of Theorem \ref{thm:asymptotic} deals with the slopes over the range where $g_i(w_{k_n}) = 0$, so it provides an asymptotic for the slope of the very long line we are forcing to appear in each $\NP(G_{k_n})$. The only difference between parts (a) and (c) is the error terms. 

We can also state two corollaries giving asymptotic formulas for the highest `old slope' and highest `classical slope.'

\begin{corollary}
\label{cor:highest}
We have 
$$
\displaystyle s_{\d(n)}(k_n)= \frac{p}{(p-1)^2} \cdot \frac{\Cdn^2}{(\Cd + \Cdn)(2\Cd + \Cdn)} \cdot k_n + O(\log n).
$$
and
$$
\displaystyle s_{\ddp(n)}(k_n)= \frac{p}{(p-1)^2} \cdot \frac{\Cdn^2}{\Cd (\Cd + \Cdn)} \cdot k_n + O(n^{1/2+\ve}).
$$
\end{corollary}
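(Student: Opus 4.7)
The plan is to deduce both asymptotic formulas directly from Theorem \ref{thm:asymptotic}, converting the leading term from an expression in the index $i$ to one in $k_n$ via the growth law (LG) and the relation $n = (k_n - k_0)/\delta$.

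First, I would apply Theorem \ref{thm:asymptotic}(a) at $i = \d(n)$, which is legal since $\d(n) \leq \d(n)$. This yields
$$
s_{\d(n)}(k_n) = \frac{q}{p-1} \cdot \frac{\Cdn^2}{\Cd(\Cd+\Cdn)(2\Cd+\Cdn)} \cdot \d(n) + O(\log n).
$$
By (LG), $\d(n) = \Cd n + O(1) = \frac{\Cd}{\delta} k_n + O(1)$. Substituting and cancelling the factor $\Cd$ in numerator and denominator gives the claimed leading term, provided $\frac{q}{(p-1)\delta} = \frac{p}{(p-1)^2}$. This identity holds in both the odd case ($q = p$, $\delta = p-1$) and the case $p = 2$ ($q = 4$, $\delta = 2$). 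The surviving $O(1)$ error is absorbed into $O(\log n)$.

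Next, I would apply Theorem \ref{thm:asymptotic}(c) at $i = \ddp(n) = 2\d(n) + \dn(n)$. This is legal for $n$ large enough, because $\ddp(n) - (\d(n) + \dn(n)) = \d(n)$ and $\d(n) \to \infty$ by (growth). The theorem then produces
$$
s_{\ddp(n)}(k_n) = \frac{q}{p-1} \cdot \frac{\Cdn^2}{\Cd(\Cd+\Cdn)(2\Cd+\Cdn)} \cdot \ddp(n) + O\bigl(\ddp(n)^{1/2+\varepsilon}\bigr).
$$
Again by (LG), $\ddp(n) = (2\Cd + \Cdn)n + O(1) = \frac{2\Cd+\Cdn}{\delta} k_n + O(1)$, and the factor $2\Cd+\Cdn$ cancels against its matching factor in the denominator, leaving exactly $\frac{p}{(p-1)^2} \cdot \frac{\Cdn^2}{\Cd(\Cd+\Cdn)} \cdot k_n$. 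Since $\ddp(n) = O(n)$, the error term becomes $O(n^{1/2+\varepsilon})$.

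The whole argument is substitution and bookkeeping. The only points requiring any care are (i) verifying that $i = \d(n)$ falls in case (a) and $i = \ddp(n)$ in case (c), and (ii) checking the collapse $\frac{q}{(p-1)\delta} = \frac{p}{(p-1)^2}$ in both parities of $p$. Neither is difficult, and I do not expect any genuine obstacle.
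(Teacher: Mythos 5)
Your proof is correct and matches the paper's argument essentially verbatim: the paper also notes $\d(n) \sim \Cd k_n/\delta$ and $\ddp(n) \sim (2\Cd+\Cdn)k_n/\delta$, applies Theorem \ref{thm:asymptotic}, and invokes the identity $q/\delta = p/(p-1)$ (equivalent to your $q/((p-1)\delta) = p/(p-1)^2$). Your version is a more detailed unpacking of the same two substitutions, including explicit verification that $i=\d(n)$ and $i=\ddp(n)$ land in cases (a) and (c) respectively.
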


\begin{proof}
Note that $\displaystyle \d(n) \sim \Cd k_n / \delta$ and $\displaystyle \ddp(n) \sim (2\Cd+\Cdn) k_n / \delta$.  The result then follows from Theorem \ref{thm:asymptotic} and the amusing identity $q/\delta= p/(p-1)$.
\end{proof}

\begin{sex-either}
In either example, $B = (p-1)A$. Part (b) of Theorem \ref{thm:asymptotic} reduces to $s_i(k_n) = {1\over 2} k_n + O(\log n)$ which is consistent with the $p$-adic slope of a $p$-new cuspform being ${k-2\over 2}$. The second part of Corollary \ref{cor:highest} says
\begin{equation*}
s_{d_p(n)}(k_n) = k_n + O(n^{1/2+\ve}),
\end{equation*}
which is consistent with the highest $U_p$-slope of a classical modular form in weight $k$ being bounded by $k-1$. The first asymptotic in Corollary \ref{cor:highest} says that
\begin{equation*}
s_{d(n)}(k_n) = {k_n\over p+1} + O(\log n).
\end{equation*} 
This is consistent with investigations of Gouv\^ea (\cite{Gouvea-WhereSlopesAre}). Buzzard has suggested that perhaps we even have $s_{d(n)}(k_n) \leq {k_n-1\over p+1}$ in $\Gamma_0(N)$-regular situations (\cite[Question 4.9]{Buzzard-SlopeQuestions}).
\end{sex-either}

We now wish to normalize the ghost slopes in order to study their distribution.  To this end, we divide by the (asymptotically) highest `classical slope' $s_{d_p(n)}(k_n)$.  That is, define the $i$-th normalized slope
$$
\tilde{s}_i(k_n) := s_i(k_n) \cdot \left(\frac{p}{(p-1)^2} \cdot \frac{\Cdn^2}{\Cd (\Cd + \Cdn)} \cdot k_n \right)^{-1},
$$
and then consider the set
$
\mathbf x_{k_n} = \left\{ \tilde{s}_i(k_n)   \st 1\leq i \leq \ddp(n)
\right\}  \subseteq [0,\infty).
$
Let $\mu^{(p)}_{k_n}$ be the probability measure on $[0,\infty)$ uniformly supported on $\mathbf x_{k_n}$. (See \cite[Sections 1.1--1.2]{Serre-Equidistribution} for the notion of weak convergence and its relationship to equidistribution.)

\begin{corollary}\label{corollary:gouvea-distribution}
As $n\goto \infty$, the measures $\mu^{(p)}_{k_n}$ weakly converge to a probability measure $\mu^{(p)}$ on $[0,1]$ which is supported on  
$$
\left[0,\frac{\Cd}{2\Cd+\Cdn}\right] \cup \left\{\frac{1}{2}\right\} \cup \left[\frac{\Cd+\Cdn}{2\Cd+\Cdn},1\right].$$
We have $\mu^{(p)}(\set{1\over 2}) = {\Cdn\over 2\Cd+\Cdn}$ and the remainder is uniformly distributed (for the Lebesgue measure).  
\end{corollary}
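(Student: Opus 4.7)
The plan is to use Theorem \ref{thm:asymptotic} to partition the set $\mathbf{x}_{k_n}$ into three pieces according to the three ranges of $i$, and to identify the weak limit of the uniform measure on each piece separately. Set $S(k_n) := \tfrac{p}{(p-1)^2}\cdot\tfrac{B^2}{A(A+B)}\cdot k_n$, so that $\tilde{s}_i(k_n) = s_i(k_n)/S(k_n)$. Using the identity $q(p-1) = p\delta$ (valid for both $p$ odd and $p=2$) and the asymptotic $n \sim k_n/\delta$, parts (a) and (c) of Theorem \ref{thm:asymptotic} give
\[
\tilde{s}_i(k_n) = \frac{\delta}{2A+B}\cdot\frac{i}{k_n} + E_i(n),
\]
where the error $E_i(n)$ is $O(\log n / n)$ in range (a) and $O(n^{-1/2+\varepsilon})$ in range (c). Part (b) similarly gives $\tilde{s}_i(k_n) = \tfrac{1}{2} + O(\log n/n)$ in that range. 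In all three cases, the error tends to $0$ as $n \to \infty$, uniformly in $i$.

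I would next note that the three ranges contain $d(n) \sim An$, $d^{\new}(n) \sim Bn$, and $d_p(n)-d(n)-d^{\new}(n) = d(n) \sim An$ indices respectively, while $\#\mathbf{x}_{k_n} = d_p(n) \sim (2A+B)n$. So the fraction of $\mathbf{x}_{k_n}$ coming from each range tends to $\tfrac{A}{2A+B}$, $\tfrac{B}{2A+B}$, and $\tfrac{A}{2A+B}$. For the middle range this immediately produces an atom of weight $B/(2A+B)$ at $1/2$ in the weak limit: for any bounded continuous $f$, the average of $f(\tilde{s}_i(k_n))$ over the middle-range indices converges to $f(1/2)$ by uniform continuity of $f$ on a compact neighborhood together with the vanishing error.

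For the two outer ranges, I would argue as follows. After discarding the vanishing error terms (again by uniform continuity of $f$), the values to average become $f\bigl(\tfrac{\delta}{2A+B}\cdot i/k_n\bigr)$ for $i$ ranging in an interval with endpoints $\sim \tfrac{A}{\delta}k_n$, $\sim \tfrac{A+B}{\delta}k_n$, $\sim \tfrac{2A+B}{\delta}k_n$. As $n\to\infty$ these are Riemann sums for $\int f$ against Lebesgue measure restricted to $[0,A/(2A+B)]$ (range (a)) and $[(A+B)/(2A+B),1]$ (range (c)); this is a one-line equidistribution statement for arithmetic progressions. Combining all three pieces, weighted by their asymptotic density, yields exactly the limit measure described.

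The main obstacle is entirely bookkeeping: verifying the endpoints of the supporting intervals via the asymptotics $d(n) \sim An$ and $d_p(n) \sim (2A+B)n$, and checking that the error terms from Theorem \ref{thm:asymptotic}, when divided by the normalizing factor $S(k_n)$ of size $\Theta(k_n)$, really do tend to zero uniformly (they become $O(\log n/n)$ in (a), (b) and $O(n^{-1/2+\varepsilon})$ in (c)). Once these are in place, weak convergence follows from continuity of the test function and the Riemann-sum interpretation above; no deeper equidistribution input is needed.
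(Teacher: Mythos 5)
Your proof is correct and follows essentially the same route as the paper's: normalize via Theorem 3.1, obtain the atom at $1/2$ from the middle range, and get equidistribution on the two outer intervals by observing that the normalized slopes $\tilde{s}_i(k_n) = \frac{\delta}{2A+B}\cdot\frac{i}{k_n} + o(1)$ become Riemann sums for Lebesgue measure. The paper's own proof is much terser (it simply cites Theorem \ref{thm:asymptotic} and Corollary \ref{cor:highest} and asserts the uniform distribution), so your version just spells out the Riemann-sum step and the error-term bookkeeping that the paper leaves implicit. Your identity $q(p-1)=p\delta$, the endpoint asymptotics $d(n)\sim Ak_n/\delta$, $d(n)+d^{\new}(n)\sim (A+B)k_n/\delta$, $d_p(n)\sim(2A+B)k_n/\delta$, and the uniform $o(1)$ bounds on the normalized errors are all correct.
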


\begin{proof}
By Theorem \ref{thm:asymptotic},  for $\dn(n) < i \leq \d(n) + \dn(n)$, $\tilde s_i(k_n) = {1\over 2} + O(\log n /n)$. This explains $\set{\frac{1}{2}}$ getting mass
$
\frac{\dn(n)}{\ddp(n)} \sim \frac{\Cdn}{2\Cd+\Cdn}
$ in the limit.

Similarly, by Corollary \ref{cor:highest}, $\twid s_{d(n)}(k_n) = {A\over 2A+B} + O(\log n /n )$. So Theorem \ref{thm:asymptotic} further implies that the $\tilde{s}_{i}(k_n)$ for $1 \leq i \leq d(n)$ become uniformly distributed between $0$ and $\frac{\Cd}{2\Cd+\Cdn}$ as $n \goto \infty$.  The last case of $\d(n) + \dn(n) < i \leq \ddp(n)$ follows similarly.  
\end{proof}
\begin{sex-either}
In either example $B = (p-1)A$, so the distribution $\mu^{(p)}$ is supported on
\begin{equation*}
\left[0,\frac{1}{p+1}\right] \cup \left\{\frac{1}{2}\right\} \cup \left[\frac{p}{p+1},1\right]
\end{equation*}
with $\{{1\over 2}\}$ getting mass ${p-1\over p+1}$ and the remaining connected intervals having equidistributed mass ${1\over p+1}$. This is consistent with the references given above. We note that no one has made large scale calculations $\rhobar$-by-$\rhobar$.
\end{sex-either}

To prove Theorem \ref{thm:asymptotic}, rather than working directly with the slopes $s_i(k_n)$, we consider the $\Delta$-slopes of $G_{k_n}$ defined now.

\begin{definition}
Let $P(t) = 1 + \sum_{i \geq 0} a_i t^i \in \C_p[[t]]$.   If $a_{i-1}, a_i \neq 0$, we define the $i$-th $\Delta$-slope of $P$ to be the slope of the line segment connecting $(i-1,v_p(a_{i-1}))$ to $(i,v_p(a_{i}))$.  Explicitly, the $i$-th $\Delta$-slope equals 
$v_p(a_i) - v_p(a_{i-1})$.
\end{definition}

Our strategy to prove Theorem \ref{thm:asymptotic} is to  prove the analogous statement about the $\Delta$-slopes of $G_{k_n}$ and then deduce the theorem. There are various technical points in the argument, so let us  sketch  the argument first. We will use the notations $\Delta_i$, $\Delta_i^{\pm}$, etc. from Section \ref{sec:abstract-series}.

First, the $i$-th $\Delta$-slope of $G_{k_n}$ equals
$$
v_p(\Delta_i(w_{k_n})) = v_p(\Delta_i^+(w_{k_n})) - v_p(\Delta_i^-(w_{k_n})).
$$
By Lemma \ref{lemma:describe_delta}, the $\Delta_i^\pm$ are very simple to describe:
$$
\Delta_i^\pm(w_{k_n}) = (w_{k_n} - w_{k_{b}})(w_{k_n} - w_{k_{b-1}}) \cdot \cdots \cdot (w_{k_n} - w_{k_{a}})
$$
where $a = \LZ(\Delta_i^\pm)$ and $b=\HZ(\Delta_i^\pm)$. When this product is non-zero, we estimate its valuation as follows:\ each term contributes at least 1, every $p$-th term contributes at least 2, every $p^2$-th term contributes at least 3, and so on.  Thus, for $p>2$, we have a rough estimate
$$
v_p(\Delta_i^\pm(w_{k_n})) \approx \lambda(\Delta_i^\pm)  + \frac{\lambda(\Delta_i^\pm)}{p} + \frac{\lambda(\Delta_i^\pm)}{p^2} + \dots = \frac{p}{p-1} \lambda(\Delta_i^\pm),
$$
and so $v_p(\Delta_i(w_{k_n})) \approx \frac{p}{p-1} \lambda(\Delta_i)$ (Lemma \ref{lemma:general_val} below makes this heuristic precise.) Using the asymptotic we established in part (f) of Lemma \ref{lemma:lambda_asymp}, we deduce
$$
v_p(\Delta_i(w_{k_n})) \approx \frac{p}{p-1} \cdot \frac{\Cdn^2}{\Cd(\Cd+\Cdn)(2\Cd+\Cdn)} \cdot i,
$$
which is the $\Delta$-slope version of the first part of Theorem \ref{thm:asymptotic}.

Handling the slopes in the range $\d(n) < i < \d(n) + \dn(n)$ requires more work as $g_i(w_{k_n})=0$ for such $i$.  We 
show, instead, that when $n$ is large enough the Newton polygon in weight $k_n$ has a straight line from index $\d(n)$ to index $\d(n) + \dn(n)$.  To estimate the slope of this line (`the semi-stable line') we remove the zero or pole of $\Delta_i$ at $w_{k_n}$ and apply the above analysis to the resulting functions.

Once we have asymptotic control over all of the $\Delta$-slopes, it is relatively straightforward to gain asymptotic control over the actual slopes.

\subsection{An estimate on $\Delta$-slopes}
We begin with the following elementary estimate. We write $\log_p$ for the logarithm base $p$. (Note:\ $\log_p$ is {\em not} a $p$-adic logarithm.)

\begin{lemma}\label{lemma:valuations}
Suppose that $\lambda >0$ is an integer and $y \in \Z$ such that either $y>0$ or $y<1-\lambda$.  
\begin{equation*}
\frac{\lambda}{p-1} - \ceil{\log_p(\lambda)} - 1 \leq \sum_{i=0}^{\lambda-1} v_p(y+i) \leq \frac{\lambda-1}{p-1} + \max \{ \floor{\log_p(|y|)}, \floor{\log_p(|y+\lambda-1|)}\}.
\end{equation*}
\end{lemma}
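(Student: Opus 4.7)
The plan is to rewrite the sum in the standard Legendre-counting form:
\begin{equation*}
\sum_{i=0}^{\lambda-1} v_p(y+i) \;=\; \sum_{k\geq 1} N_k, \qquad N_k := \#\bigl\{i\in\{0,\dotsc,\lambda-1\} : p^k \mid (y+i)\bigr\}.
\end{equation*}
The hypothesis that either $y>0$ or $y<1-\lambda$ ensures that $0 \notin \{y,\dotsc,y+\lambda-1\}$, so each $v_p(y+i)$ is finite. Since $N_k$ counts multiples of $p^k$ in an interval of $\lambda$ consecutive integers, one has the elementary sandwich
\begin{equation*}
\lfloor \lambda/p^k \rfloor \;\leq\; N_k \;\leq\; \lfloor \lambda/p^k \rfloor + 1,
\end{equation*}
and the interval $[y,y+\lambda-1]$ contains no multiple of $p^k$ as soon as $p^k$ exceeds the maximum of $|y|$ and $|y+\lambda-1|$ (here is where the sign hypothesis is really used: if the interval straddled $0$, arbitrarily large powers of $p$ might still contribute).

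For the upper bound, set $K = \max\{\lfloor\log_p|y|\rfloor,\lfloor\log_p|y+\lambda-1|\rfloor\}$. Then $N_k=0$ for $k>K$, so
\begin{equation*}
\sum_{i=0}^{\lambda-1} v_p(y+i) \;=\; \sum_{k=1}^{K} N_k \;\leq\; K + \sum_{k=1}^{K}\lfloor \lambda/p^k\rfloor \;\leq\; K + v_p(\lambda!),
\end{equation*}
and Legendre's formula $v_p(\lambda!) = (\lambda - s_p(\lambda))/(p-1) \leq (\lambda-1)/(p-1)$, using $s_p(\lambda)\geq 1$, finishes this half.

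For the lower bound, set $K_0 = \lceil \log_p \lambda\rceil$ and use only the first $K_0$ terms with $N_k \geq \lfloor \lambda/p^k\rfloor \geq \lambda/p^k - 1$. A geometric sum gives
\begin{equation*}
\sum_{k=1}^{K_0} \lambda/p^k \;=\; \frac{\lambda}{p-1}\bigl(1 - p^{-K_0}\bigr) \;\geq\; \frac{\lambda}{p-1} - \frac{1}{p-1},
\end{equation*}
since $p^{K_0}\geq \lambda$. Subtracting the $K_0$ ones and absorbing $1/(p-1)\leq 1$ yields the claimed lower bound $\lambda/(p-1) - \lceil\log_p\lambda\rceil - 1$.

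The main obstacle is really just keeping the floor/ceiling bookkeeping honest and checking that the sign hypothesis on $y$ is exactly what is needed to truncate the outer sum at $k = K$; everything else is a standard Legendre count plus a geometric series.
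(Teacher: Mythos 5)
Your proof is correct, and it takes a genuinely different route from the paper's. The paper packages the sum directly as $v_p(\lambda!) + v_p\binom{y+\lambda-1}{\lambda} \geq v_p(\lambda!)$ for the lower bound, and for the upper bound it shifts to the index $a$ achieving the maximal valuation $e = v_p(y+a)$, writes the sum as $e + v_p(a!) + v_p((\lambda-1-a)!) = e + v_p((\lambda-1)!) - v_p\binom{\lambda-1}{a} \leq e + v_p((\lambda-1)!)$, and then invokes the "classical" sandwich $\tfrac{n}{p-1} - \lceil\log_p n\rceil - 1 \leq v_p(n!) \leq \tfrac{n}{p-1}$ as a black box. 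You instead do a direct Legendre count, sandwiching $N_k$ between $\lfloor\lambda/p^k\rfloor$ and $\lfloor\lambda/p^k\rfloor + 1$, truncating the sum at $K$ (for the upper bound) or $K_0$ (for the lower bound), and re-deriving both factorial estimates in place via $v_p(\lambda!) = (\lambda - s_p(\lambda))/(p-1)$ and a geometric series. The paper's argument is slicker and isolates the location of the maximal-valuation element $a$, which is conceptually useful; yours is more self-contained, re-proving the factorial bounds rather than citing them, and makes the role of the sign hypothesis (finiteness of each $v_p(y+i)$ and truncation of the outer sum at $k=K$) explicit. Both arrive at identical constants.
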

\begin{proof}
For a lower bound, note that 
$$
\sum_{i=0}^{\lambda-1} v_p(y+i) = v_p(\lambda!) + v_p\left(\binom{y+\lambda-1}{\lambda}\right)
\geq v_p(\lambda!).
$$
For an upper bound, set $e := \max_{0\leq i \leq \lambda-1} v_p(y+i)$ and choose $a$ such that $0 \leq a \leq \lambda-1$ with $v_p(y+a) = e$.  Then $v_p(y+a+j) = v_p(j)$ for $-a \leq j \leq \lambda-1-a$, and thus
\begin{align*}
\sum_{i=0}^{\lambda-1} v_p(y + i) 
&= v_p(a!) + e +  v_p\left((\lambda-1-a)!\right) \\
&=  e+  v_p((\lambda-1)!) - v_p\left( \binom{\lambda-1}{a}\right)
\leq  e+  v_p((\lambda-1)!).
\end{align*}
The lemma then follows from the classical bounds on valuations of factorials 
$$
\frac{n}{p-1} - \ceil{\log_p(n)} -1 \leq \displaystyle v_p(n!) \leq \frac{n}{p-1},
$$
and the bound $e \leq \max \{ \floor{\log_p(|y|)}, \floor{\log_p(|y+\lambda-1|)}\}$.
\end{proof}

If $\lambda > 0$ is an integer and $b \in \Z$, we now define a polynomial
\begin{equation*}
P_{b,\lambda}(w) = (w-w_{k_b})(w-w_{k_{b-1}})\dotsb (w-w_{k_{b-\lambda+1}}).
\end{equation*}
modeling $\Delta_i^{\pm}$.

\begin{lemma}\label{lemma:general_val}
For $n \in \Z$ such that $P_{b,\lambda}(w_{k_n}) \neq 0$, we have
\begin{equation*}
v_p(P_{b,\lambda}(w_{k_n})) = {q \lambda \over p-1} + O(\log \lambda,\log|n-b|).
\end{equation*}
\end{lemma}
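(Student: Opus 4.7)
The plan is to unfold the definition of $P_{b,\lambda}$ directly and combine the two inputs we already have: Remark \ref{remark:w-to-k-change}, which linearizes $v_p(w_k - w_{k'})$ in terms of the classical valuation $v_p(k-k')$, and Lemma \ref{lemma:valuations}, which handles sums of valuations of consecutive integers.

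First I would write $v_p(P_{b,\lambda}(w_{k_n})) = \sum_{j=0}^{\lambda-1} v_p(w_{k_n} - w_{k_{b-j}})$ and apply Remark \ref{remark:w-to-k-change} to each term. Since $k_n - k_{b-j} = (n-b+j)\delta$, this gives
\begin{equation*}
v_p(w_{k_n} - w_{k_{b-j}}) = 1 + v_p(2) + v_p(\delta) + v_p(n-b+j) = c + v_p(n-b+j),
\end{equation*}
where $c := 1 + v_p(2) + v_p(\delta)$ evaluates to $1$ when $p$ is odd and to $3$ when $p=2$. Summing produces
\begin{equation*}
v_p(P_{b,\lambda}(w_{k_n})) = c\lambda + \sum_{j=0}^{\lambda-1} v_p(y+j), \qquad y := n-b.
\end{equation*}

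Next I would check that the hypothesis matches Lemma \ref{lemma:valuations}. The assumption $P_{b,\lambda}(w_{k_n}) \neq 0$ says precisely that $n \notin \{b, b-1, \dots, b-\lambda+1\}$, i.e., $y > 0$ or $y < 1-\lambda$, so Lemma \ref{lemma:valuations} applies and yields
\begin{equation*}
\sum_{j=0}^{\lambda-1} v_p(y+j) = \frac{\lambda}{p-1} + O\bigl(\log \lambda + \log|n-b|\bigr),
\end{equation*}
where I absorb the term $\log_p|y+\lambda-1|$ into $O(\log|n-b| + \log\lambda)$ using $|y+\lambda-1| \leq |n-b| + \lambda$.

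Finally, I would assemble the pieces: $v_p(P_{b,\lambda}(w_{k_n})) = \bigl(c + \tfrac{1}{p-1}\bigr)\lambda + O(\log \lambda + \log|n-b|)$, and observe that $c + \tfrac{1}{p-1} = \tfrac{q}{p-1}$ holds case-by-case (for $p$ odd this is $1 + \tfrac{1}{p-1} = \tfrac{p}{p-1}$; for $p=2$ it is $3+1=4$). There is no serious obstacle here: once Remark \ref{remark:w-to-k-change} and Lemma \ref{lemma:valuations} are in hand, the argument is purely bookkeeping, and the only thing worth double-checking is the uniform verification of $c + 1/(p-1) = q/(p-1)$ across the odd-$p$ and $p=2$ cases.
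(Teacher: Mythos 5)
Your proof is correct and follows the same route as the paper: expand the product, linearize each factor with Remark \ref{remark:w-to-k-change}, and then invoke Lemma \ref{lemma:valuations} on the resulting sum of $p$-adic valuations of consecutive integers. The paper states the argument only for $p>2$ and remarks that $p=2$ is similar; your use of the uniform constant $c = 1 + v_p(2) + v_p(\delta)$ and the identity $c + \tfrac{1}{p-1} = \tfrac{q}{p-1}$ merges the two cases cleanly but is not a different method.
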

\begin{proof}
We assume $p>2$ (the case of $p=2$ is similar). Note that $k_n-k_j = (n-j)(p-1)$ and so Remark \ref{remark:w-to-k-change} and the oddness of $p$ implies
\begin{equation*}
v_p(w_{k_n}-w_{k_j}) = 1 + v_p(k_n-k_j) = 1 + v_p(n-j).
\end{equation*}
In turn, this gives the equalities
\begin{equation*}
v_p(P_{b,\lambda}(w_{k_n})) = \sum_{j=b-\lambda+1}^b v_p(w_{k_n}-w_{k_j}) = \lambda + \sum_{j=b-\lambda+1}^b  v_p(n-j).
\end{equation*}
Finally, Lemma \ref{lemma:valuations} applies to $y = n-b-\lambda+1$ and the same $\lambda$ as here (since $P_{b,\lambda}(w_{k_n})\neq 0$) and the result follows.
\end{proof}

The poles and zeros of $\Delta_i$ are simple (Lemma \ref{lemma:del}) and if $\Delta_i(w_{k})$ is well-defined and non-zero then $v_p(\Delta_i(w_{k}))$ is the $i$-th $\Delta$-slope of $G_k$.  To give uniform estimates, we define
$$
\Delta_i^*(w_{k_n}) := \begin{cases}
(w-w_{k_n}) \Delta_i(w)|_{w=w_{k_n}}  & \text{if~}\Delta_i \text{~has~a~pole~at~}w_{k_n};\\
\displaystyle \frac{\Delta_i(w)}{w-w_{k_n}} \big|_{w=w_{k_n}}   & \text{if~}\Delta_i \text{~has~a~zero~at~}w_{k_n};\\
\Delta_i(w_{k_n}) & \text{otherwise}.
\end{cases}
$$

\begin{proposition}\label{prop:ith_newtonslope}
We have
\begin{equation*}
v_p\left(\Delta_i^{\ast}(w_{k_n})\right) = \frac{q}{p-1} \cdot \frac{\Cdn^2}{\Cd(\Cd+\Cdn)(2\Cd+\Cdn)} \cdot i + O( \log n, \log i).
\end{equation*}
\end{proposition}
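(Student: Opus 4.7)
The plan is to directly compute $v_p(\Delta_i^{\ast}(w_{k_n}))$ by writing it as a difference of valuations of modified versions of $\Delta_i^{+}$ and $\Delta_i^{-}$, and then applying Lemma \ref{lemma:general_val} to each piece. The main term and the asymptotic for $\lambda(\Delta_i)$ from Lemma \ref{lemma:lambda_asymp}(f) will combine to give the stated formula.

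First, I would observe that by Lemma \ref{lemma:describe_delta}, each $\Delta_i^{\pm}$ is exactly a polynomial of the form $P_{b,\lambda}(w)$ with $b = \HZ(\Delta_i^{\pm})$ and $\lambda = \lambda(\Delta_i^{\pm})$. Define $\Delta_i^{\pm,\ast}(w_{k_n})$ by removing the factor $(w - w_{k_n})$ from $\Delta_i^{\pm}$ in the case that $\Delta_i^{\pm}$ vanishes at $w_{k_n}$, and otherwise just evaluating. Since $\Delta_i^{+}$ and $\Delta_i^{-}$ are coprime (Lemma \ref{lemma:del}(a)), at most one of these modifications is nontrivial, and one checks from the definition that
\[
\Delta_i^{\ast}(w_{k_n}) = \frac{\Delta_i^{+,\ast}(w_{k_n})}{\Delta_i^{-,\ast}(w_{k_n})}.
\]
Each $\Delta_i^{\pm,\ast}$ is again of the form $P_{b',\lambda'}$ with $\lambda' = \lambda(\Delta_i^{\pm}) + O(1)$ and with $b'$ still equal to (or within one of) $\HZ(\Delta_i^{\pm})$; crucially, $\Delta_i^{\pm,\ast}(w_{k_n})$ is nonzero by construction, so Lemma \ref{lemma:general_val} applies.

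Next I would apply Lemma \ref{lemma:general_val} to both $\Delta_i^{+,\ast}(w_{k_n})$ and $\Delta_i^{-,\ast}(w_{k_n})$. Using Lemma \ref{lemma:lambda_asymp} parts (a)--(c), the relevant $\HZ$'s and $\LZ$'s are all $O(i)$, so the endpoints $b',\, b'-\lambda'+1$ satisfy $|n-b'|,\, |n-b'-\lambda'+1| = O(n+i)$, giving error $O(\log n + \log i)$; the $\log \lambda'$ error is also $O(\log i)$. Subtracting yields
\[
v_p(\Delta_i^{\ast}(w_{k_n})) = \frac{q}{p-1}\bigl(\lambda(\Delta_i^{+,\ast}) - \lambda(\Delta_i^{-,\ast})\bigr) + O(\log n + \log i).
\]

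Finally, since the two starred modifications change $\lambda$ by at most $1$ each and can't both happen, we get $\lambda(\Delta_i^{+,\ast}) - \lambda(\Delta_i^{-,\ast}) = \lambda(\Delta_i^{+}) - \lambda(\Delta_i^{-}) + O(1) = \lambda(\Delta_i) + O(1)$. Substituting the asymptotic from Lemma \ref{lemma:lambda_asymp}(f), namely $\lambda(\Delta_i) = \frac{B^2}{A(A+B)(2A+B)} i + O(1)$, produces the claimed formula.

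The main obstacle is essentially bookkeeping: verifying that the "starring" procedure only perturbs the degree by $O(1)$ and that Lemma \ref{lemma:general_val} can be legitimately applied to the modified polynomial (i.e.\ that the value is nonzero and that $|n - b'|$ satisfies the hypothesis $y > 0$ or $y < 1-\lambda$ of Lemma \ref{lemma:valuations}). This last hypothesis is exactly the statement that $w_{k_n}$ is not a root of the modified polynomial, which holds by the very definition of $\Delta_i^{\pm,\ast}$. Once this is in place, the statement follows mechanically from the asymptotics already established.
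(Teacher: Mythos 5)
Your proposal follows essentially the same route as the paper — define local modifications $\Delta_i^{\pm,\ast}$, observe that $\Delta_i^{\ast}(w_{k_n}) = \Delta_i^{+,\ast}(w_{k_n})/\Delta_i^{-,\ast}(w_{k_n})$, apply the $P_{b,\lambda}$ valuation estimate, and then substitute the asymptotic for $\lambda(\Delta_i)$ from Lemma \ref{lemma:lambda_asymp}(f). However, there is a concrete error in one step: you assert that each $\Delta_i^{\pm,\ast}$ is ``again of the form $P_{b',\lambda'}$.'' This is false in general. By Lemma \ref{lemma:describe_delta}, $\Delta_i^{\pm} = P_{\HZ(\Delta_i^\pm),\,\lambda_i^\pm}$ is a product over a \emph{consecutive} run of indices. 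If $w_{k_n}$ is a zero of $\Delta_i^{\pm}$ with $\LZ(\Delta_i^\pm) < n < \HZ(\Delta_i^\pm)$ (the typical situation), removing the factor $(w - w_{k_n})$ breaks the consecutive run into two pieces, yielding a product $P_{\HZ(\Delta_i^\pm),\,\HZ(\Delta_i^\pm)-n}\cdot P_{n-1,\,n-\LZ(\Delta_i^\pm)}$, not a single $P_{b',\lambda'}$. As written, Lemma \ref{lemma:general_val} therefore does not apply directly to $\Delta_i^{\pm,\ast}$.

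The repair is exactly what the paper does: write $\Delta_i^+(w) = P_{b^+_i,\lambda'}(w)\cdot(w-w_{k_n})\cdot P_{n-1,\lambda''}(w)$ with $\lambda'+\lambda''=\lambda_i^+ - 1$, apply Lemma \ref{lemma:general_val} separately to $P_{b^+_i,\lambda'}$, $P_{n-1,\lambda''}$, and $P_{b^-_i,\lambda_i^-}$, and add. Since all the relevant lengths and index offsets are $O(i)$ (using $n = O(i)$ on the domain in question), the combined error is still $O(\log i,\log n)$ and the main term is $\frac{q}{p-1}(\lambda'+\lambda''-\lambda_i^-) = \frac{q}{p-1}(\lambda_i^+-\lambda_i^-) + O(1)$. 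So your final formula is correct, but the intermediate claim that the starred polynomial is a single $P_{b',\lambda'}$ must be replaced by a product of two such factors before the valuation lemma can be invoked.
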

\begin{proof}
It suffices to prove the result separately for pairs $(i,k_n)$ ranging over a finite number of disjoint domains. With this in mind, we focus only on the pairs $(i,k_n)$ such that $w_{k_n}$ is a zero of $\Delta_i^+$, leaving the rest for the reader. Lemma \ref{lemma:lambda_asymp} implies that $n=O(i)$ when $\Delta_i^+(w_{k_n}) = 0$. So, we need to show that
\begin{equation}\label{eqn:Deltaistar}
v_p(\Delta_i^{\ast}(w_{k_n})) = \frac{q}{p-1} \cdot \frac{\Cdn^2}{\Cd(\Cd+\Cdn)(2\Cd+\Cdn)} \cdot i + O(\log i).
\end{equation}
For notation, write $b_i^{\pm} = \HZ(\Delta_i^{\pm})$ and recall $\lambda^{\pm}_i = \lambda(\Delta_i^{\pm})$. By Lemma \ref{lemma:describe_delta},
\begin{equation*}
\Delta_i^+(w) = P_{b^+_i,\lambda'}(w)\cdot (w-w_{k_n}) \cdot P_{{n-1},\lambda''}(w)
\end{equation*}
where $\lambda' + \lambda'' = \lambda_i^+ - 1$. So, by the definition of $\Delta_i^{\ast}$ we have
\begin{equation*}
v_p\left(\Delta_i^{\ast}(w_{k_n})\right) = v_p\left(P_{b^+_i,\lambda'}(w_{k_n})\right) + v_p\left(P_{n-1,\lambda''}(w_{k_n})\right) - v_p\left(P_{b^-_i,\lambda_i^-}(w_{k_n})\right).
\end{equation*}
Since $\lambda_i^{\pm}$ and $b_i^{\pm}$ are both $O(i)$ (Lemma \ref{lemma:lambda_asymp}) and $n=O(i)$, so Lemma \ref{lemma:general_val} implies that
\begin{equation}\label{eqn:Deltaistar-2}
v_p(\Delta_i^{\ast}(w_{k_n})) = {q\over p-1}(\lambda' + \lambda'' - \lambda_i^-) + O(\log i) = {q\over p-1}(\lambda_i^+ - \lambda_i^-) + O(\log i).
\end{equation}
We finally use the asymptotic for  $\lambda_i^+ - \lambda_i^-$
\begin{equation}\label{eqn:Deltaistar-3}
\lambda_i^+ - \lambda_i^- \sim \frac{\Cdn^2}{\Cd(\Cd+\Cdn)(2\Cd+\Cdn)} \cdot i,
\end{equation}
 given by Lemma \ref{lemma:lambda_asymp}. Then, \eqref{eqn:Deltaistar} follows from combining \eqref{eqn:Deltaistar-2} and \eqref{eqn:Deltaistar-3}.
\end{proof}

\subsection{Handling the semi-stable line}

Since $g_i(w_{k_n})=0$ if $\d(n) < i < \d(n) + \dn(n)$, we know that the slopes $s_i(k_n)$ for $i$ in this range are equal.  Here, we show that for $n$ large enough, $\d(n)$ and $\dn(n)$ are indices of (consecutive) breakpoints on $\NP(G_{k_n})$ and that the slope of the connecting line is as claimed in Theorem \ref{thm:asymptotic}.

We begin with a lemma that separates out $\Delta$-slopes  in the `oldform' and `newform' ranges.

\begin{lemma}
\label{lemma:Gbounds}
For each $\ve > 0$, there exists an $n_\varepsilon$ such that for all $n \geq n_{\varepsilon}$ we have:
\begin{enumerate}
\item $\displaystyle v_p(\Delta_i(w_{k_n})) \displaystyle < \left(\frac{p}{(p-1)^2} \cdot \frac{\Cdn^2}{(\Cd+\Cdn)(2\Cd+\Cdn)} + \ve\right) k_n$ if $i\leq \d(n)$, and
\item $\displaystyle v_p(\Delta_{i}(w_{k_n})) \displaystyle > \left(\frac{p}{(p-1)^2} \cdot \frac{\Cdn^2}{\Cd(2\Cd+\Cdn)} - \ve\right)k_n$ for all $i \geq \d(n) + \dn(n)$.
\end{enumerate}
\end{lemma}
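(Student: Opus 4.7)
The plan is to reduce both statements directly to Proposition \ref{prop:ith_newtonslope}, which gives an $i$-linear asymptotic for $v_p(\Delta_i^\ast(w_{k_n}))$ with an $O(\log n, \log i)$ error. The upper bound in (a) will come from plugging in $i=\d(n)$ as the maximum of the relevant range, and the lower bound in (b) will come from plugging in $i=\d(n)+\dn(n)$ as the minimum. Since $\d(n) \sim An$, $\dn(n) \sim Bn$, and $n = (k_n - k_0)/\delta$, both of these boundary values of $i$ are asymptotically linear in $k_n$, and the identity $q/\delta = p/(p-1)$ (already used in Corollary \ref{cor:highest}) will produce precisely the constants appearing on the right-hand sides.

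First I would check that in the relevant ranges, $\Delta_i(w_{k_n})$ is well-defined and in fact equal to $\Delta_i^\ast(w_{k_n})$, so that the asymptotic of Proposition \ref{prop:ith_newtonslope} applies. By Lemma \ref{lemma:del}, $w_{k_n}$ is a zero of $\Delta_i^+$ only when $\d(n)<i\leq \d(n)+\tfrac12\dn(n)$ and a zero of $\Delta_i^-$ only when $\d(n)+\tfrac12\dn(n)+1\leq i \leq \d(n)+\dn(n)$. For $i\leq \d(n)$ both conditions fail, so $\Delta_i(w_{k_n})=\Delta_i^\ast(w_{k_n})$ and part (a) reduces cleanly to Proposition \ref{prop:ith_newtonslope}. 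For $i \geq \d(n)+\dn(n)+1$ both conditions again fail, so the same equality holds and (b) reduces likewise. The only boundary edge case is $i=\d(n)+\dn(n)$ itself, where $\Delta_i^-$ vanishes at $w_{k_n}$ but $\Delta_i^+$ does not; at this single index $\Delta_i$ has a pole, so $v_p(\Delta_i(w_{k_n}))=+\infty$ and the lower bound in (b) is vacuous.

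With $\Delta_i=\Delta_i^\ast$ established, Proposition \ref{prop:ith_newtonslope} combined with the fact that the main term is linear and increasing in $i$ gives, for $i \leq \d(n)$,
\begin{equation*}
v_p(\Delta_i(w_{k_n})) \;\leq\; \frac{q}{p-1}\cdot\frac{\Cdn^2}{\Cd(\Cd+\Cdn)(2\Cd+\Cdn)}\cdot \d(n) + O(\log n),
\end{equation*}
and for $i \geq \d(n)+\dn(n)+1$,
\begin{equation*}
v_p(\Delta_i(w_{k_n})) \;\geq\; \frac{q}{p-1}\cdot\frac{\Cdn^2}{\Cd(\Cd+\Cdn)(2\Cd+\Cdn)}\cdot (\d(n)+\dn(n)) - O(\log n).
\end{equation*}
Substituting $\d(n) = \tfrac{A}{\delta}k_n + O(1)$ and $\d(n)+\dn(n)=\tfrac{A+B}{\delta}k_n+O(1)$, the factor $A$ cancels in the first display and $A+B$ cancels in the second, yielding $\tfrac{q}{\delta(p-1)}\cdot \tfrac{B^2}{(A+B)(2A+B)}\cdot k_n$ and $\tfrac{q}{\delta(p-1)}\cdot \tfrac{B^2}{A(2A+B)}\cdot k_n$ respectively, each with additive error $O(\log n)$. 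Applying $q/\delta=p/(p-1)$ converts these to the precise leading constants in (a) and (b), and for $n$ sufficiently large the $O(\log n)$ error is dominated by $\varepsilon k_n$.

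I expect no serious obstacle here—this lemma is essentially bookkeeping on top of Proposition \ref{prop:ith_newtonslope}. The only point that requires care is confirming $\Delta_i=\Delta_i^\ast$ at the endpoints $i=\d(n)$ and $i=\d(n)+\dn(n)$ and recognizing that the single pole index $i=\d(n)+\dn(n)$ creates no difficulty for a lower bound. Once that is dispatched, the rest is straightforward substitution of the linear asymptotics of $\d$ and $\dn$ and the standard constant manipulation $q/\delta=p/(p-1)$.
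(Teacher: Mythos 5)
Your proof follows the same route as the paper's (reduce to Proposition~\ref{prop:ith_newtonslope} after noting $\Delta_i = \Delta_i^\ast$ off the pole/zero range, then substitute the linear asymptotics of $d$ and $d^{\new}$ and use $q/\delta = p/(p-1)$), and the bookkeeping for the leading constants is correct.

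One detail is wrong: at $i = \d(n)+\dn(n)$ you say $\Delta_i$ has a pole at $w_{k_n}$ (true, for $\dn(n)\geq 2$) and conclude $v_p(\Delta_i(w_{k_n})) = +\infty$. A pole forces the valuation to be $-\infty$, not $+\infty$ (it is $+\infty$ at a \emph{zero}); equivalently, in terms of $\Delta$-slopes it is $v_p(g_i(w_{k_n})) - v_p(g_{i-1}(w_{k_n})) = (\text{finite}) - \infty = -\infty$. So at that single index the stated lower bound in (b) would in fact \emph{fail} if read literally, rather than being automatic. This is harmless in practice because the downstream application (Lemma~\ref{lemma:three_part} via Proposition~\ref{prop:break}) only uses the bound for $i > \d(n)+\dn(n)$, so the cleanest fix is to simply note the correct sign and that the statement is only needed (and only makes sense) for $i > \d(n)+\dn(n)$; the paper's own phrasing ``$i \geq \d(n)+\dn(n)$'' suffers from the same mild imprecision.
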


\begin{proof}
Both parts follow from Proposition \ref{prop:ith_newtonslope}.  For instance, if $i\leq d(n)$,  then $\Delta_i^{\ast}(w_{k_n}) = \Delta_i(w_{k_n})$. So, Proposition \ref{prop:ith_newtonslope} implies that $v_p(\Delta_{i}(w_{k_n}))$ grows no faster than
$$
\frac{q}{p-1} \cdot \frac{\Cdn^2}{\Cd(\Cd+\Cdn)(2\Cd+\Cdn)} \cdot \d(n) + O(\log n).
$$
As in the proof of Corollary \ref{cor:highest}, we have $d(n)q \sim {Ap\over (p-1)} k_n$ and the result follows.
\end{proof}

The next lemma describes the slope of the line connecting  $(i,v_p(g_i(w_{k_n})))$ for $i=\d(n)$ to the point with $i=\d(n)+\dn(n)$. To ease notation, we write $y_i(k_n)$ for $v_p(g_i(w_{k_n}))$.

\begin{lemma}
\label{lemma:ss_slope}
We have
\begin{equation}\label{eqn:ss_slope}
\frac{y_{\d(n)+\dn(n)}(k_n) - y_{d(n)}(k_n)}{\dn(n)} = \frac{p}{(p-1)^2} \cdot \frac{\Cdn^2}{2\Cd(\Cd+\Cdn)} \cdot k_n + O(\log n).
\end{equation}
\end{lemma}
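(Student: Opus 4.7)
The plan is to translate the left-hand side of \eqref{eqn:ss_slope} into a sum of the valuations of the $\Delta_i^{\ast}(w_{k_n})$ and then apply Proposition \ref{prop:ith_newtonslope}. First, \eqref{eqn:mult-pattern} gives $m_{d(n)}(k_n) = m_{d(n)+\dn(n)}(k_n) = 0$, so both $y_{d(n)}(k_n)$ and $y_{d(n)+\dn(n)}(k_n)$ are finite and well-defined. I now start from the telescoping identity of rational functions
\begin{equation*}
\frac{g_{d(n)+\dn(n)}(w)}{g_{d(n)}(w)} = \prod_{i=d(n)+1}^{d(n)+\dn(n)} \Delta_i(w).
\end{equation*}
By Lemma \ref{lemma:del}(a), each $\Delta_i$ has at most a simple zero or simple pole at $w_{k_n}$, so I may factor $\Delta_i(w) = (w-w_{k_n})^{\epsilon_i} h_i(w)$ where $h_i(w_{k_n}) = \Delta_i^{\ast}(w_{k_n})$ and $\epsilon_i \in \{-1, 0, +1\}$.

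The key combinatorial step is to verify that $\sum_i \epsilon_i = 0$. By Lemma \ref{lemma:del}(b)--(c), $\Delta_i$ has a simple zero at $w_{k_n}$ precisely when $d(n) < i \leq d(n) + \dn(n)/2$, and a simple pole precisely when $d(n) + \dn(n)/2 + 1 \leq i \leq d(n) + \dn(n)$. In both parity cases for $\dn(n)$, a count of these index sets shows they have the same cardinality (with one index having $\epsilon_i = 0$ when $\dn(n)$ is odd). Consequently $(w-w_{k_n})^{\sum \epsilon_i} = 1$, and matching valuations at $w_{k_n}$ in the telescoping identity above gives
\begin{equation*}
y_{d(n)+\dn(n)}(k_n) - y_{d(n)}(k_n) = \sum_{i=d(n)+1}^{d(n)+\dn(n)} v_p\bigl(\Delta_i^{\ast}(w_{k_n})\bigr).
\end{equation*}

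To finish, I apply Proposition \ref{prop:ith_newtonslope} term-by-term. Since each $i$ in the range is $O(n)$, the error $O(\log n, \log i)$ simplifies to $O(\log n)$. The sum becomes
\begin{equation*}
\frac{q}{p-1} \cdot \frac{\Cdn^2}{\Cd(\Cd+\Cdn)(2\Cd+\Cdn)} \sum_{i=d(n)+1}^{d(n)+\dn(n)} i + O\bigl(\dn(n)\log n\bigr),
\end{equation*}
and a direct computation using $d(n) = \Cd n + O(1)$ and $\dn(n) = \Cdn n + O(1)$ gives $\sum_{i=d(n)+1}^{d(n)+\dn(n)} i = \frac{\Cdn(2\Cd+\Cdn)}{2} n^2 + O(n)$. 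Dividing through by $\dn(n)$, substituting $n = (k_n - k_0)/\delta$, and applying the identity $q/\delta = p/(p-1)$ delivers the claimed main term $\frac{p}{(p-1)^2} \cdot \frac{\Cdn^2}{2\Cd(\Cd+\Cdn)} \cdot k_n$ with error $O(\log n)$. The main obstacle is verifying the cancellation $\sum_i \epsilon_i = 0$ cleanly across both parities of $\dn(n)$; once this identity is in hand, the remainder is a routine summation estimate.
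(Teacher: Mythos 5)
Your proof is correct and takes essentially the same approach as the paper: both rest on Proposition \ref{prop:ith_newtonslope} together with the observation that over the range $d(n) < i \leq d(n)+\dn(n)$ the zeroes and poles of $\Delta_i$ at $w_{k_n}$ cancel, so the valuations can be replaced by $v_p(\Delta_i^{\ast}(w_{k_n}))$. The only organizational difference is that you telescope $g_{d(n)+\dn(n)}/g_{d(n)}$ and sum $v_p(\Delta_i^{\ast}(w_{k_n}))$ over the middle range $[d(n)+1,\, d(n)+\dn(n)]$ directly, whereas the paper computes $y_{d(n)}(k_n)$ and $y_{d(n)+\dn(n)}(k_n)$ separately as full sums from $i=1$ (displays \eqref{eqn:ydk-shorter} and \eqref{eqn:ydk-longer}) and then subtracts; the two bookkeepings are equivalent.
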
 

\begin{proof}
First, we apply Proposition \ref{prop:ith_newtonslope} to deduce
\begin{align}
y_{\d(n)}(k_n) \label{eqn:ydk-shorter}
&= \sum_{i=1}^{\d(n)} v_p(\Delta_i (w_{k_n})) 
= \sum_{i=1}^{\d(n)} \frac{q}{p-1} \cdot \frac{\Cdn^2}{\Cd(\Cd+\Cdn)(2\Cd+\Cdn)} \cdot i + O(\log n) \\
&= \frac{q}{p-1} \cdot \frac{\Cdn^2}{2\Cd(\Cd+\Cdn)(2\Cd+\Cdn)} \cdot \d(n)^2 + O(n\log n).\nonumber
\end{align}
Among the $i$ with $\d(n) < i < \d(n) + \dn(n)$, $w_{k_n}$ is a zero of $\Delta_i$ exactly as many times as it is a pole (by construction), and so
$$
\left( \prod_{i=\d(n)+1}^{\d(n)+\dn(n)}  \Delta_i\right)(w_{k_n}) = \prod_{i=\d(n)+1}^{\d(n)+\dn(n)} \Delta_i^*(w_{k_n}).
$$
Arguing as we did for \eqref{eqn:ydk-shorter}, Proposition \ref{prop:ith_newtonslope} gives
\begin{equation}\label{eqn:ydk-longer}
y_{\d(n)+\dn(n)}(k_n) = 
\frac{q}{p-1} \cdot \frac{\Cdn^2}{2\Cd(\Cd+\Cdn)(2\Cd+\Cdn)} \cdot (\d(n)+\dn(n))^2 + O(n \log n).
\end{equation}
Write $\mathrm{SS}$ for the left-hand side of \eqref{eqn:ss_slope}. Then, if we combine \eqref{eqn:ydk-shorter} and \eqref{eqn:ydk-longer}, and then divide by $\dn(n) = O(n)$, we see
\begin{align*}
\mathrm{SS} &= {q\over p-1}\cdot {B^2 \over 2A(A+B)(2A+B)}\cdot (2\d(n)+\dn(n)) + O(\log n)\\
&= {p\over (p-1)^2}\cdot {B^2\over 2A(A+B)}\cdot k_n + O( \log n).
\end{align*}
This proves the lemma.
\end{proof}

\begin{proposition}
\label{prop:break}
For $n\gg0$, $i=\d(n)$ and $i=\d(n)+\dn(n)$ are indices of (consecutive) breakpoints on $\NP(G_{k_n})$ and the slope of the line connecting these breakpoints is
\begin{equation*}
{p\over(p-1)^2}\cdot {B^2\over 2A(A+B)}\cdot k_n + O(\log n).
\end{equation*}
\end{proposition}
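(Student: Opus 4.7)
The plan is to pin down $i = \d(n)$ and $i = \d(n) + \dn(n)$ as consecutive breakpoints of $\NP(G_{k_n})$ by sandwiching the secant slope
$$
\mathrm{SS} := \frac{p}{(p-1)^2} \cdot \frac{\Cdn^2}{2\Cd(\Cd+\Cdn)} \cdot k_n
$$
from Lemma \ref{lemma:ss_slope} between the $\Delta$-slopes on either side of the interval. Because $g_i(w_{k_n}) = 0$ strictly between these indices, the points $(i, v_p(g_i(w_{k_n})))$ in that range sit at $+\infty$ and contribute nothing to the Newton polygon; thus once both endpoints are known to be vertices, the intervening edge is automatically a single straight line whose slope is precisely the one computed in Lemma \ref{lemma:ss_slope}.

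The comparison rests on the strict inequalities
$$
\frac{\Cdn^2}{(\Cd+\Cdn)(2\Cd+\Cdn)} \;<\; \frac{\Cdn^2}{2\Cd(\Cd+\Cdn)} \;<\; \frac{\Cdn^2}{\Cd(2\Cd+\Cdn)},
$$
whose three entries are the coefficients of $k_n$ in (respectively) the upper bound of Lemma \ref{lemma:Gbounds}(a), the slope $\mathrm{SS}$, and the lower bound of Lemma \ref{lemma:Gbounds}(b), after dividing by the common factor $\frac{p}{(p-1)^2}$. Each inequality reduces to $\Cdn > 0$ via an elementary cross-multiplication, so the gaps between consecutive entries are positive constants depending only on $\Cd$ and $\Cdn$.

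With these in hand, I would choose $\ve$ smaller than (say) a quarter of the minimum gap (rescaled by $\frac{p}{(p-1)^2}$) and apply Lemma \ref{lemma:Gbounds} with this $\ve$. Because the main terms of Lemma \ref{lemma:Gbounds} are linear in $k_n$ while the error in Lemma \ref{lemma:ss_slope} is only $O(\log n)$, for $n$ sufficiently large every $\Delta$-slope at position $\leq \d(n)$ is strictly less than $\mathrm{SS}$ and every $\Delta$-slope at position $> \d(n)+\dn(n)$ is strictly greater than $\mathrm{SS}$. Since every Newton-polygon slope is an average of consecutive $\Delta$-slopes, this rules out any vertex of $\NP(G_{k_n})$ strictly between $\d(n)$ and $\d(n)+\dn(n)$ and forces both endpoints to be vertices, whence the edge between them has the claimed slope by Lemma \ref{lemma:ss_slope}.

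The main (mild) obstacle is purely bookkeeping: confirming that the $\ve k_n$ slack from Lemma \ref{lemma:Gbounds} combined with the $O(\log n)$ slack from Lemma \ref{lemma:ss_slope} both fit inside the positive rational gaps above. That is routine, since the gaps are $n$-independent constants while both error sources are $o(k_n)$, and it is the only point at which the hypothesis $n \gg 0$ is actively used.
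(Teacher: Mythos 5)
Your proof is correct and follows essentially the same path as the paper: establish the strict inequalities between the three constants, invoke Lemma \ref{lemma:Gbounds} (with a fixed small $\ve$) and Lemma \ref{lemma:ss_slope}, and deduce that both $d(n)$ and $d(n)+\dn(n)$ are consecutive breakpoints. The only difference is that the paper packages the final sandwiching step as a separate combinatorial statement (Lemma \ref{lemma:three_part}, whose proof is left to the reader), while you inline the same convexity argument, observing that every slope of the Newton polygon is an average of consecutive $\Delta$-slopes and that the points between the two endpoints lie at $+\infty$. One small clarification worth making explicit: ruling out breakpoints strictly between $d(n)$ and $d(n)+\dn(n)$ is automatic (those points are at infinity); the real content is that the slope of any edge ending at or before $d(n)$ is forced below $\mathrm{SS}$, while the slope of any edge beginning at $d(n)$ is at least $\mathrm{SS}$ (being a weighted average of $\mathrm{SS}$ and larger $\Delta$-slopes), and symmetrically at the other endpoint — this is what forces both to be vertices.
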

\begin{proof}
Since $A,B > 0$ we have that
$$
\frac{\Cdn^2}{(\Cd+\Cdn)(2\Cd+\Cdn)}
< 
\frac{\Cdn^2}{2\Cd(\Cd+\Cdn)} 
< 
\frac{\Cdn^2}{\Cd(2\Cd+\Cdn)}.
$$
So, the result follows from Lemmas \ref{lemma:Gbounds} and \ref{lemma:ss_slope} combined with the following formal lemma about Newton polygons (whose proof is left to the reader).
\end{proof}

\begin{lemma}
\label{lemma:three_part}
Consider a collection $\cP = \{(i,y_i) : i\geq 0\}$ such that $y_i \in \R_{\geq 0} \union \set{\infty}$ and $y_i = \infty$ if and only if $N_1<i<N_2$ for some $N_1, N_2 \geq 0$.  If $i < j$, set $\Delta_{i,j} = {y_j-y_i\over j-i}$, and set $\Delta_i := \Delta_{i-1,i}$.  Assume that there are constants $\gamma_i$ such that:
\begin{enumerate}
\item 
If $i \leq N_1$ then $\Delta_i \leq \gamma_1$; 
\item 
If $N_2 < i$ then $\Delta_i \geq \gamma_2$; and 
\item
$\gamma_1 < \Delta_{N_1,N_2} < \gamma_2$.
\end{enumerate}
Then, $N_1$ and $N_2$ are (consecutive) indices of break points of $NP(\cP)$.
\end{lemma}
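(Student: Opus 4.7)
The plan is to show that the chord $L$ joining $(N_1, y_{N_1})$ to $(N_2, y_{N_2})$ is a supporting line of the lower convex hull $NP(\cP)$ that strictly separates these two points from every other point of $\cP$. If I can do this, then the portion of $NP(\cP)$ lying on $L$ is exactly the segment from $(N_1,y_{N_1})$ to $(N_2, y_{N_2})$, forcing $N_1$ and $N_2$ to be consecutive vertices (breakpoints).

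Write $s := \Delta_{N_1, N_2}$ for the slope of $L$, so that $L(x) = y_{N_1} + (x-N_1)s$. For any $i < N_1$, a telescoping sum together with hypothesis (a) gives
\[
y_{N_1} - y_i \;=\; \sum_{k=i+1}^{N_1} \Delta_k \;\leq\; (N_1 - i)\gamma_1 \;<\; (N_1 - i)s,
\]
the last inequality being the left half of (c). Rearranging yields $y_i > L(i)$. Symmetrically, for $j > N_2$, hypothesis (b) and the right half of (c) give
\[
y_j - y_{N_2} \;=\; \sum_{k=N_2+1}^{j} \Delta_k \;\geq\; (j - N_2)\gamma_2 \;>\; (j - N_2)s,
\]
so $y_j > L(j)$. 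Finally, for $N_1 < j < N_2$ one has $y_j = \infty > L(j)$ trivially, while $(N_1,y_{N_1})$ and $(N_2, y_{N_2})$ lie on $L$ by construction.

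Thus $L$ lies weakly below every point of $\cP$ and meets the point set only at $(N_1, y_{N_1})$ and $(N_2, y_{N_2})$. It follows at once that $L$ supports $NP(\cP)$ along precisely the segment joining those two points, so $N_1$ and $N_2$ are both breakpoints, and they are consecutive because no third point of $\cP$ lies on $L$. The argument is a direct verification from the hypotheses; there is no real obstacle, the only nuance being that the \emph{strict} inequalities supplied by (c) are exactly what prevents the supporting edge from extending beyond $N_1$ to the left or beyond $N_2$ to the right, which is what one needs to conclude that both indices are actual vertices rather than interior points of a longer edge.
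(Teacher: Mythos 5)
Your proof is correct. The paper explicitly leaves this lemma's proof "to the reader," so there is no reference argument to compare against, but your argument is the standard and natural one: telescope the $\Delta$-slopes to show (using (a) and the left half of (c)) that every point of $\cP$ to the left of $N_1$ lies strictly above the chord $L$, symmetrically (using (b) and the right half of (c)) for points to the right of $N_2$, and observe that the interior points with $y_j = \infty$ are trivially above $L$. Since $L$ then meets $\cP$ in exactly the two endpoints, $L$ is an edge of $NP(\cP)$ and both $N_1$ and $N_2$ must be vertices — for instance, if $N_1$ were not a vertex, the Newton polygon would have slope $\Delta_{N_1,N_2}$ just to the left of $N_1$, placing the previous vertex on $L$, contradicting the strict inequalities. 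This is exactly the content of your final paragraph, so the argument is complete and fills the gap the paper leaves open.
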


Now we can complete the proof of Theorem \ref{thm:asymptotic}.

\subsection{Proof of Theorem \ref{thm:asymptotic}}
Part (b) of the theorem follows from Proposition \ref{prop:break}, which also allows us to assume in the sequel that $n$ is chosen large enough that $i=d(n)$ and $i=d(n)+d^{\new}(n)$ are breakpoints on the Newton polygon $\NP(G_{k_n})$.

It remains to handle cases (a) and (c) of the theorem. We first claim that consecutive breakpoints in the range covered by these parts of the theorem are not too far apart.  More precisely, for $i \leq \d(n)$ or $i \geq \d(n) + \dn(n)$, let $N:= N(i,n)$ denote the smallest index $j  \geq i$ of a breakpoint of the Newton polygon of $G_{k_n}$, and let $M:= M(i,n)$ denote the largest such index $j$ with $j<i$. Note that $M < N$ and either both are less than $d(n)$ or both are larger than $d(n) + d^{\new}(n)$.

\begin{claim}
\leavevmode
\begin{enumerate}[(I)]
\item $N(i,n) - M(i,n) = O(\log n)$ if $i < d(n)$.
\item For each $\varepsilon > 0$, $N(i,n) - M(i,n) = O(i^{1/2 + \varepsilon})$ if $i > d(n) + d^{\new}(n)$.
\end{enumerate}
In particular, $N(i,n)$ and $M(i,n)$ are $i + O(\log n)$ or $i + O(i^{1/2+\varepsilon})$ depending on the range of $i$'s. (This follows because $i$ lies between $M(i,n)$ and $N(i,n)$.)
\end{claim}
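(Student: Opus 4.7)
The plan is to reduce the question about breakpoint spacing to an inequality between consecutive $\Delta$-slopes $v_p(\Delta_j(w_{k_n}))$, for which Proposition \ref{prop:ith_newtonslope} gives an explicit asymptotic. Set
\[
c := \frac{q}{p-1}\cdot\frac{\Cdn^2}{\Cd(\Cd+\Cdn)(2\Cd+\Cdn)},
\]
which is positive since $\Cd, \Cdn > 0$. In both ranges of interest, $j \leq \d(n)$ or $j \geq \d(n) + \dn(n)$, Lemma \ref{lemma:del} guarantees that $w_{k_n}$ is neither a zero nor a pole of $\Delta_j$, so $\Delta_j^*(w_{k_n}) = \Delta_j(w_{k_n})$ and Proposition \ref{prop:ith_newtonslope} supplies
\[
v_p(\Delta_j(w_{k_n})) = c\cdot j + O(\log n + \log j).
\]

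The bridge from $\Delta$-slopes to breakpoints is a short convexity observation. If $M < N$ are consecutive breakpoints of $\NP(G_{k_n})$ with $N \geq M + 2$ and $s$ is the slope of the segment joining them, then every $(j, v_p(g_j(w_{k_n})))$ with $M \leq j \leq N$ lies weakly above that segment, while the endpoints $(M, v_p(g_M(w_{k_n})))$ and $(N, v_p(g_N(w_{k_n})))$ lie on it. Comparing the points at $j = M+1$ and $j = N-1$ to the segment yields the chain
\[
v_p(\Delta_{M+1}(w_{k_n})) \;\geq\; s \;\geq\; v_p(\Delta_N(w_{k_n})).
\]
Substituting the $\Delta$-slope asymptotic into the outer inequality $v_p(\Delta_{M+1}(w_{k_n})) \geq v_p(\Delta_N(w_{k_n}))$ and cancelling the errors, I will conclude $N - M - 1 = O(\log n + \log N)$.

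To finish Part (I), I invoke Proposition \ref{prop:break}: for $n \gg 0$ the integer $\d(n)$ is a breakpoint, so $N \leq \d(n) = O(n)$ and hence $\log N = O(\log n)$, giving $N - M = O(\log n)$. For Part (II), the same proposition gives $M \geq \d(n) + \dn(n)$, and the estimate $N - M = O(\log n + \log N)$ together with $N \geq i$ bootstraps: since $\log$ is sublinear, $N \leq i + O(\log N)$ forces $N = O(i)$, so $\log N = O(\log i)$ and $N - M = O(\log i) \subseteq O(i^{1/2+\varepsilon})$ for any $\varepsilon > 0$. The ``in particular'' statement is immediate from $M \leq i \leq N$, since both $N - i$ and $i - M$ are bounded by $N - M$.

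I do not expect a serious obstacle: the argument rests entirely on Proposition \ref{prop:ith_newtonslope} and the one-line convexity comparison. The only point that demands any care is the bootstrap in Part (II), and that is entirely routine.
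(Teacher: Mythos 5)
Your argument for Part~(I) is exactly the paper's argument: compare $v_p(\Delta_{M+1}(w_{k_n}))$ with $v_p(\Delta_N(w_{k_n}))$ via the breakpoint convexity inequality, feed in Proposition~\ref{prop:ith_newtonslope}, and use $M,N\leq d(n)=O(n)$ to replace $\log N$ by $\log n$. So far you and the paper agree.

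For Part~(II) you take a genuinely different route, and it is worth comparing. The paper does \emph{not} repeat the $\Delta$-slope comparison there. Instead it sums the estimate of Proposition~\ref{prop:ith_newtonslope} over $j\leq i$ (following Lemma~\ref{lemma:ss_slope}) to pin all Newton points $(j,y_j(k_n))$ between the two parabolas $y=\tfrac{D}{2}x^2\pm Cx\log x$, and then invokes the calculus exercise Lemma~\ref{lemma:region}, which converts the width $Cx\log x$ of that envelope into the gap bound $N-M=O(M^{1/2+\varepsilon})$. That envelope argument is what produces the specific exponent $1/2+\varepsilon$ appearing in Theorem~\ref{thm:asymptotic}(c). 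You instead re-apply the Part~(I) mechanism verbatim: $v_p(\Delta_{M+1})\geq v_p(\Delta_N)$ combined with Proposition~\ref{prop:ith_newtonslope} gives $D(N-M-1)\leq 2C\max\{\log n,\log N\}$; then $n=O(i)$ plus the easy bootstrap $N\leq i+O(\log N)\Rightarrow N=O(i)$ yields $N-M=O(\log i)$. This is a correct argument (and the bootstrap step is indeed routine), but it establishes something \emph{stronger} than the stated claim: $O(\log i)$ rather than $O(i^{1/2+\varepsilon})$, which would in turn sharpen the error in Theorem~\ref{thm:asymptotic}(c). Your approach is therefore not merely a different proof of the same bound --- it sidesteps Lemma~\ref{lemma:region} entirely and improves the result, while the paper's route is needed only if one wished to work directly with cumulative errors in $y_i$ (of size $O(i\log i)$) rather than with the pointwise $\Delta$-slope error. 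Since what you prove certainly implies the statement, the proposal is sound; just be aware that it proves more than asked and that it diverges from the printed proof precisely in Part~(II).
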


Assuming this claim for the moment, the proof of the theorem follows quickly.  Indeed, 
 the line connecting $(M,y_M)$ to $(N,y_N)$ has slope $s_i(k_n)$ and by construction of the Newton polygon,
\begin{equation}
\label{eqn:ineq}
v_p(\Delta_N(w_{k_n})) \leq  s_i(k_n) \leq v_p(\Delta_{M+1}(w_{k_n})).
\end{equation}
Next, by Proposition \ref{prop:ith_newtonslope}, we know that 
$$
v_p(\Delta_{M+1}(w_{k_n})) = \DD \cdot M(i,n) + O(\log M(i,n),\log n)
$$
\and
$$
v_p(\Delta_{N}(w_{k_n})) = \DD \cdot N(i,n) + O(\log N(i,n),\log n)
$$
with $\DD = \frac{q}{p-1} \cdot \frac{\Cdn^2}{\Cd(\Cd+\Cdn)(2\Cd+\Cdn)}$. Finally, our (as of yet unproven) claim implies that the asymptotics become (the same)
\begin{equation*}
Di + \begin{cases}
O(\log n) & \text{if $i \leq d(n)$;}\\
O(i^{1/2+\varepsilon}) & \text{if $i > d(n) + d^{\new}(n)$}.
\end{cases}
\end{equation*}
The theorem now follows from \eqref{eqn:ineq}.

Returning to the unproven claim, we first handle the case where $i \leq d(n)$.  Then  
we know that there is some constant $C$ such that for $i,n$ large enough
\begin{equation}\label{eqn:M-abs-bound}
| v_p(\Delta_{M+1}(w_{k_n})) - \DD \cdot M(i,n)| \leq C \max\{\log M(i,n),\log n\}.
\end{equation}
As $M(i,n)$ and $i$ are less than $d(n)\sim An$, we may replace $C$ and assume the bound on the right-hand side of \eqref{eqn:M-abs-bound} is $C\log n$. Similarly, we also have (for $C$ large enough)
\begin{equation*}
| v_p(\Delta_{N}(w_{k_n})) - \DD \cdot N(i,n)|  \leq C \log n.
\end{equation*}
By \eqref{eqn:ineq}, we have $v_p(\Delta_{M+1}(w_{k_n})) \geq v_p(\Delta_N(w_{k_n}))$ and so straightforward algebra (add and subtract terms) implies that
\begin{equation*}
D\cdot N(i,n) -D\cdot M(i,n) \leq 2C \log n.
\end{equation*}
This proves part (I) of the claim.

Now we handle claim (II), where $i > d(n) + d^{\new}(n)$. We write $y_i(k_n) = v_p(g_i(w_{k_n}))$. Since $n = O(i)$ in this case, we can use the same logic as in the proof of Lemma \ref{lemma:ss_slope}  and the asymptotic in Proposition \ref{prop:ith_newtonslope} to see
\begin{align*}
y_i(k_n)
&= 
\sum_{j=1}^{i} Dj + O(\log i)
= \frac{D^2}{2} \cdot i^2 + O(\log i!) 
= \frac{D^2}{2} \cdot i^2 + O(i\log i)
\end{align*}
where the last equality uses Stirling's approximation.
In particular, all of the Newton points of $(i,y_i(k_n))$ lie between the  graphs of the two functions $y = \frac{D^2}{2} x^2 \pm C x \log x$ for some $C>0$. Lemma \ref{lemma:region} below then implies that
$$
N(i,n) - M(i,n) = O(i^{1/2+\ve}).
$$
(Apply the lemma to $N=N(i,n)$ and $M=M(i,n)$ and use finally that $M \leq i$.)
This completes the proof of claim (II), and thus the proof overall.

\begin{lemma}
\label{lemma:region}
Let $a,b>0$ and consider the region $\mathcal R$ comprised of the points on or between the graphs of $y=ax^2 \pm b x \log x$.  Let $M<N$ and assume that $y_\star$ are chosen so that the line segment connecting $(M,y_M)$ and $(N,y_N)$ is contained in $\mathcal R$.  Then for each $\ve >0$, there is a constant $C>0$ depending only on $a$ and $b$ such that
$$
N - M \leq C M^{1/2+\ve}.
$$
\end{lemma}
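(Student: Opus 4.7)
The plan is to evaluate the ``line segment contained in $\mathcal R$'' hypothesis at a single well-chosen point, namely the midpoint $x_0 := (M+N)/2$. Parametrize $y_M = aM^2 + \theta_M$ and $y_N = aN^2 + \theta_N$ with $|\theta_M| \leq b M \log M$ and $|\theta_N| \leq b N \log N$, which is exactly the condition that $(M,y_M)$ and $(N,y_N)$ lie in $\mathcal R$. A direct expansion gives the identity
\[
\frac{y_M + y_N}{2} - a x_0^2 \;=\; \frac{a (N-M)^2}{4} + \frac{\theta_M + \theta_N}{2},
\]
which just records that the secant to the parabola $y = ax^2$ from $x=M$ to $x=N$ lies above the parabola by $a(x-M)(N-x)$, which attains its maximum value $a(N-M)^2/4$ at the midpoint. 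Since the midpoint of the segment is in $\mathcal R$ by hypothesis, the left-hand side has absolute value at most $b x_0 \log x_0 \leq b N \log N$; combining this with $|\theta_M + \theta_N|/2 \leq b N \log N$ yields the key estimate
\[
(N-M)^2 \;\leq\; \tfrac{8b}{a}\, N \log N.
\]

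The second step converts this bound on $N-M$ in terms of $N$ into one in terms of $M$ by a short dichotomy. If $N \leq 2M$, then $N \log N \leq 2M \log(2M)$, so $(N-M)^2 = O_{a,b}(M \log M)$, and for any $\varepsilon > 0$ this is at most $C M^{1+2\varepsilon}$ for $M$ sufficiently large; taking square roots gives $N - M \leq C' M^{1/2+\varepsilon}$. If instead $N > 2M$, then $N-M > N/2$, and substituting back into the key estimate forces $N - M \leq \tfrac{16b}{a}\, \log(2(N-M))$, so $N - M$ is bounded by a constant depending only on $a$ and $b$. But $N - M > M$ in this branch, so $M$ is bounded as well, and the desired inequality is trivial after enlarging $C$ to absorb this initial range.

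The anticipated main obstacle is minimal: the structural work has already been done in identifying the region $\mathcal R$ in the preceding subsection. The only mild subtlety is justifying that the midpoint gives the tightest constraint rather than having to track the line across the whole interval $[M,N]$; this is what the convexity of $y = ax^2$ provides, since the secant lies above the parabola with deviation $a(x-M)(N-x)$ peaked at $x_0$, so once the midpoint fits inside a band of thickness $\approx bN\log N$ the whole segment does.
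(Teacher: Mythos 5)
Your proof is correct, and it takes a genuinely different route from the paper's. The paper constructs the extremal line segment $\mathscr L_M$ through $(M,\ell(M))$ that is tangent to the upper curve $y = u(x)$, reduces to that extremal case, and then bounds the horizontal distance from $M$ to the tangency point $t$ and from $t$ to $N$ by comparing derivatives of $u$ to secant slopes, with a fair amount of asymptotic calculus in each step. You instead evaluate the ``segment $\subset \mathcal R$'' hypothesis at a single interior point, the midpoint, and exploit the algebraic identity that the secant of the parabola $y = ax^2$ over $[M,N]$ overshoots the parabola by exactly $a(x-M)(N-x)$, peaking at $a(N-M)^2/4$. That turns the geometric confinement into a one-line inequality $a(N-M)^2/4 \le 2bN\log N$ with no calculus at all, and your dichotomy $N \le 2M$ vs.\ $N > 2M$ disposes of the conversion to a bound in $M$ cleanly. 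Your approach is both more elementary and slightly sharper: in the main branch it gives $N-M = O_{a,b}(\sqrt{M\log M})$, strictly better than $O(M^{1/2+\varepsilon})$. The only cosmetic point is that the closing sentence of your write-up (``once the midpoint fits \dots\ the whole segment does'') is not needed and is stated backwards relative to what you actually use: you do not need the midpoint constraint to be the tightest one, only that it is \emph{a} constraint, which follows immediately from the segment being contained in $\mathcal R$. Also, as in the paper, the argument implicitly requires $M > 1$ so that $\log M, \log x_0 > 0$; this matches the paper's explicit restriction to $M > 1$ and is harmless for the application.
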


\begin{proof}
We briefly explain the proof of this (complicated) calculus exercise. Write $u(x) = ax^2 + bx\log x$ and $\ell(x) = ax^2 - bx\log(x)$. Given $M > 1$, let $\mathscr L_M$ be the unique line segment with endpoint $(M,\ell(M))$, which lies tangent to the graph of $y = u(x)$ and whose other endpoint lies on the graph of $y=\ell(x)$.

\begin{figure}[htbp]
\begin{tikzpicture}[scale=.5]
\draw[domain=1.737:8.949,smooth,variable=\x,black,thick,dotted] plot ({\x},{.8099*(\x-3)+1.229});
\node[fill=white] at (7,4.8) {$\footnotesize  \mathscr L_M$};
\draw[domain=0.8:8,smooth,variable=\x,black,thick] plot ({\x},{.1*\x^2 + .1*\x*ln(\x)});
\draw[domain=.8:10,smooth,variable=\x,black,thick] plot ({\x},{.1*\x^2 - .1*\x*ln(\x)});
\draw[fill=black] (3,1.229) circle[radius=2.5pt];
\draw[fill=black] (1.737,.206) circle[radius=2.5pt];
\draw[fill=black] (8.949,6.047) circle[radius=2.5pt];
\node at (2.1,1.8) {$\footnotesize  (t,u(t))$};
\node at (2.5,-.4) {$\footnotesize  (M,\ell(M))$};
\node at (10.6,5.8) {$\footnotesize (N,\ell(N))$};
\node at (3.3,6.4) {$ \footnotesize  u(x) = ax + bx\log x$};
\node at (8.3,1.4) {$\footnotesize \ell(x) = ax - bx\log x$};
\end{tikzpicture}
\caption{}\label{fig:two-graphs}
\end{figure}

The horizontal length of a line segment is (by definition) the difference of the $x$-coordinates of its endpoints. We note that $\mathscr L_M$ is special because it has the largest horizontal distance among line segments completely contained in $\mathcal R$ and passing through a point of the form $(M,y_M)$, and so it is enough to assume that $(N,y_N) = (N,\ell(N))$. Write $(t,u(t))$ for the point on the graph $y=u(x)$ which is tangent to $\mathscr L_M$. (See Figure \ref{fig:two-graphs}.) With this notation, it is enough to confirm that
\begin{enumerate}
\item $t = M + O(M^{1/2+\varepsilon})$ for any $\varepsilon > 0$, and
\item $N = t + O(t^{1/2+\varepsilon})$ for any $\varepsilon > 0$.
\end{enumerate}
To show (a) we may show the stronger assertion that $t < M + M^{1/2+\varepsilon}$ when $M \gg 0$. Equivalently, by the choice of $t$, it is enough to show
\begin{equation}\label{eqn:slope-vs-secant}
u'(M + M^{1/2+\varepsilon}) > {u(M+M^{1/2+ \varepsilon})-\ell(M) \over M^{1/2+\varepsilon}}
\end{equation}
for $M \gg 0$. (Here, as usual, $u'$ is the derivative of $u$.) But as $M \goto \infty$, we have
\begin{equation*}
u'(M+M^{1/2+\varepsilon}) \approx 2a(M+M^{1/2+\varepsilon})
\end{equation*}
whereas
\begin{equation*}
{u(M+M^{1/2+ \varepsilon})-\ell(M) \over M^{1/2+\varepsilon}} \approx 2aM + aM^{1/2+\varepsilon}.
\end{equation*}
(Here, $f(M) \approx g(M)$ means that $f(M)/g(M) \goto 1$ as $M \goto \infty$.) This proves \eqref{eqn:slope-vs-secant} for $M \gg 0$ as claimed. Likewise, to show (b) it is enough to prove that
\begin{equation*}
u'(t) < {\ell(t+t^{1/2+\varepsilon})-u(t) \over t^{1/2+\varepsilon}}
\end{equation*}
when $t \gg 0$. This is proven in a completely analogous manner.
\end{proof}

\section{Arithmetic progressions}\label{sec:progressions}

\subsection{Statement of results}

In this section, under a new hypothesis on the functions $\d$ and $\dn$ (see \eqref{QL} below), we will show that the slopes of $\NP(G_\kappa)$ form a union of arithmetic progression, up to finitely many exceptions, for any $\kappa$ with $w_\kappa \nin \Zp$. 

\begin{definition}\label{defn:quasi-linear}
A function $d : \Z \goto \Z$ is quasi-linear if there exists positive integers $P_d, Q_d > 0$ such that $d(n+P_d) = d(n) + Q_d$ for all $n$. We call $P_d$ the period and $Q_d$ the defect (of $d$).
\end{definition}

We note the following obvious facts we will use throughout. First, if $d$ is quasi-linear then then $d(n) \sim (Q_d/P_d)n$. Second, the sum of two quasi-linear functions is quasi-linear.
 Third, if $d$ and $d'$ are quasi-linear with the same period and defect and $d(n) = d'(n)$ for $n \gg 0$, then $d = d'$. Fourth, if $d(n)$ is a function defined for $n \gg 0$ and $d(n+P_d) = d(n)+Q_d$ over its domain, then it can be uniquely extended to a quasi-linear function.

Now we make our new assumption on the functions $\d$ and $\dn$:
\begin{equation}
\label{QL}
\tag{QL}
\text{$\d$ and $\dn$ are quasi-linear}.
\end{equation}
By the previous paragraph, \eqref{QL} implies \eqref{LG} and it implies that $d+d^{\new}$ and $d_p$ are quasi-linear.  We now fix periods $P_{\star}$ and defects $Q_{\star}$ for $\star$ each of $\d$, $\dn$, $\d+\dn$, and $\ddp$.

\begin{sex-full}
We verify \eqref{QL} with periods and defects given by
$$
\Pd = \Pdn =  \ds \frac{12}{\gcd(12,\delta)}, 
~\Qd =
 \frac{\delta\mu_0(N)}{\gcd(12,\delta)} \and\Qdn = (p-1) \Qd.
$$
(This gives the constants $A,B$ in the condition \eqref{LG} that we previously claimed.) We will also show that we can take 
$$
\Pddn = \begin{cases}
\ds \frac{12}{\gcd(12,\delta)} & \text{if~}p>3\\
2 & \text{if~}p=3\\
3 & \text{if~}p=2
\end{cases},
\and 
\Qddn = 
 \begin{cases} 
 p\Qd & \text{if~}p>3\\
 \mu_0(N)  &\text{if~}p=2,3,
 \end{cases}
 $$
as well as
$$
\Pdp = \begin{cases}
1 & \text{if~}p>3\\
p  & p=2,3
\end{cases}
\and Q_{\ddp} = 
\begin{cases}
\ds {(p-1)(p+1)\mu_0(N)\over 12} & \text{if~} p>3\\
(p-1) \mu_0(N) & p=2,3.
\end{cases}
$$
\end{sex-full}
\begin{sex-rhobar}
We will verify \eqref{QL} and that the periods of $\d$ and $\dn$ can be taken to be $\Pd = \Pdn = p+1$. The defects will satisfy $(p-1)\Qd = \Qdn$. (And, $\Qd$ is effectively computable). For $\ddp$ we will have period $\Pdp = 1$ and $\Qdp = \Qd$. (This gives the constants $A,B$ in the condition \eqref{LG} that we previously claimed.)
\end{sex-rhobar}

We need some notation for the main theorem of this section.  First, if $w_{\kappa} \nin \Z_p$, we set 
$$
\alpha_{\kappa} = \sup_{w' \in \Z_p} v_p(w_{\kappa}-w') \in (0,\infty).
$$
If $v_p(w_\kappa) \not \in \Z$, then $\alpha_{\kappa}$ is simply $v_p(w_\kappa)$.  But, for example, if $w_\kappa = p + p^{3/2}$, then $\alpha_\kappa = 3/2$. Second, let 
\begin{equation*}
v_0 = \begin{cases}
1 & \text{if $p$ is odd},\\
3 & \text{if $p = 2$.}
\end{cases}
\end{equation*}
So, the integers $k \in \mathscr W_{k_0}$ all lie in $v_p(w_k) \geq v_0$. Finally, define
\begin{equation*}
\Qf := \lcm(\Qd,\Qdp,\Qddn),
\end{equation*}
and for each integer $r \geq 0$, write
\begin{equation}\label{eqn:defn-Qr}
\Qfr = \begin{cases}
p^r \Qf & \text{if $p$ is odd,}\\
\Qf & \text{if $p=2$ and $r < v_0$,}\\
2^{r-2}\Qf & \text{if $p=2$ and $r \geq v_0$}.
\end{cases}
\end{equation}
Note that $Q_r = Q$ if $r = 0$, regardless of $p$.

\begin{theorem}\label{theorem:global-halo-progressions}
Assume $\kappa \in \mathscr W_{k_0}$ and $w_{\kappa} \nin \Z_p$. Set $r = \floor{\alpha_\kappa}$. Then,
the slopes of $\NP(G_{\kappa})$  form a finite union of $\Qfr$-many arithmetic progressions with common difference
\begin{equation*}
\Qf \cdot \left( \frac{\Pd}{\Qd} - \frac{4 \Pdp}{\Qdp} + \frac{\Pddn}{\Qddn}\right) \cdot \left(\alpha_{\kappa} + \sum_{v=v_0}^r (p-1)p^{r-v}\cdot v \right),
\end{equation*}
up to finitely many exceptional slopes contained within the first $\Qfr$-many slopes.
\end{theorem}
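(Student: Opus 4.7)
The plan is to study the $\Delta$-slopes $v_p(\Delta_i(w_\kappa))$ directly: I will show that, modulo finitely many exceptions, they split into $Q_r$ arithmetic progressions indexed by $i \bmod Q_r$, each with the claimed common difference, and then transfer this to the Newton-polygon slopes of $G_\kappa$. Because $w_\kappa \notin \Z_p$ while every zero of every $g_i$ lies at some $w_{k_n} \in \Z_p$, all $g_i(w_\kappa) \neq 0$, so every $\Delta$-slope is a well-defined finite number.

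Using Lemma \ref{lemma:describe_delta}, the $\Delta$-slope expands as a signed sum
\begin{equation*}
v_p(\Delta_i(w_\kappa)) = \sum_{n=\LZ(\Delta_i^+)}^{\HZ(\Delta_i^+)} v_p(w_\kappa - w_{k_n}) - \sum_{n=\LZ(\Delta_i^-)}^{\HZ(\Delta_i^-)} v_p(w_\kappa - w_{k_n}).
\end{equation*}
Setting $f(n) := v_p(w_\kappa - w_{k_n})$ and picking a best approximation $w^* \in \Z_p$, ultrametricity gives $f(n) = \min(\alpha_\kappa, v_p(w^* - w_{k_n}))$; by Remark \ref{remark:w-to-k-change}, the condition $v_p(w^* - w_{k_n}) \geq v$ becomes an arithmetic-progression condition on $n$ modulo $p^{v-v_0}$ for each $v \geq v_0$ (appropriately adjusted when $p=2$). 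A direct residue-class inventory in any interval whose length is a multiple of $p^{r+1-v_0}$ then produces a per-period sum
\begin{equation*}
\alpha_\kappa + (p-1) \sum_{v=v_0}^r v \cdot p^{r-v},
\end{equation*}
which is precisely the second factor of the theorem's common difference.

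Next I transport the shift $i \mapsto i + Q_r$. Condition \eqref{QL} makes each of $\HZ(\Delta_i^\pm)$ and $\LZ(\Delta_i^\pm)$ quasi-linear in $i$, with increments under $i \mapsto i + Q_r$ equal to $\Pd Q_r/\Qd$, $2\Pdp Q_r/\Qdp$, $2\Pdp Q_r/\Qdp$, and $\Pddn Q_r/\Qddn$ respectively, each of which is by construction a multiple of the period $p^{r+1-v_0}$. The difference $v_p(\Delta_{i+Q_r}(w_\kappa)) - v_p(\Delta_i(w_\kappa))$ telescopes into four $f$-sums on consecutive-integer intervals of those lengths, and by the per-period formula these combine to exactly
\begin{equation*}
Q \cdot \left( \frac{\Pd}{\Qd} - \frac{4\Pdp}{\Qdp} + \frac{\Pddn}{\Qddn} \right) \cdot \left( \alpha_\kappa + (p-1) \sum_{v=v_0}^r v \cdot p^{r-v} \right),
\end{equation*}
the advertised common difference. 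Hence within each residue class mod $Q_r$, the sequence $\{v_p(\Delta_i(w_\kappa))\}$ is arithmetic with this common difference for all sufficiently large $i$.

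To conclude, I must identify these $\Delta$-slopes with the slopes of $\NP(G_\kappa)$. The common difference is positive and uniform across the $Q_r$ classes, so for $i$ beyond some threshold the $\Delta$-slopes from different classes interleave into a strictly increasing sequence and coincide with the Newton-polygon slopes. Confining the exceptions to the first $Q_r$ slopes (not merely to a finite set) will follow by showing that the initial values of the $Q_r$ arithmetic progressions all lie in a window of width less than the common difference, which in turn reduces to applying the same $f$-sum bookkeeping to a window of $Q_r$ consecutive indices. The main technical obstacle is the combinatorial inventory in the second paragraph, particularly checking the formulas in the $p=2$ case where $v_0 = 3$ and the definition of $Q_r$ bifurcates at $r = v_0$, together with verifying in the final step that the exceptional range is sharp enough to sit within the first $Q_r$ indices.
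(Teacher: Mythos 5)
Your core computation --- expanding the $\Delta$-slope via Lemma \ref{lemma:describe_delta}, replacing $v_p(w_\kappa - w_{k_n})$ by $\min(\alpha_\kappa, v_p(w_{k^+}-w_{k_n}))$, observing that the resulting function of $n$ has period $p^{r+1-v_0}$, tallying the per-period residue count, and using Lemma \ref{lemma:hzlz} to see that the shift $i \mapsto i + \Qfr$ moves each $\HZ(\Delta_i^\pm)$ and $\LZ(\Delta_i^\pm)$ by a multiple of that period --- is exactly the paper's, up to packaging: the paper organizes the same telescoping as a factorization $\Delta_{i+\Qfr}^\pm = a^\pm \cdot b^\pm$ and computes $v_p(b^\pm(w_\kappa))$ via Lemma \ref{lemma:easy}. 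Two small caveats: the best approximation must be taken to be $w_{k^+}$ for an \emph{integer} $k^+$ (Lemma \ref{lemma:k-plus-weight}) rather than an arbitrary element of $\Zp$ so that Remark \ref{remark:w-to-k-change} applies, and this requires $v_p(w_\kappa) \geq v_0$; the complementary case $v_p(w_\kappa) < v_0$ must be settled separately first, which the paper does via Corollary \ref{cor:AP-theorem-halo-cases}.

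The real gap is the final transfer from $\Delta$-slopes to the slopes of $\NP(G_\kappa)$. You assert that the $\Delta$-slopes eventually ``interleave into a strictly increasing sequence and coincide with the Newton-polygon slopes,'' and propose to justify this by showing the $\Qfr$ initial $\Delta$-slopes lie in a window of width less than the common difference $c$. The window condition does not give what you need: if two initial $\Delta$-slopes are out of order, say the one at index $1$ exceeds the one at index $2$ (both inside the window), the same inequality persists after every shift by $\Qfr$, so the sequence of $\Delta$-slopes is never eventually monotone, and the Newton-polygon slopes are then not the sorted $\Delta$-slopes but genuinely different quantities coming from a strictly lower convex hull (compare the three-point set $y_0=0$, $y_1=2$, $y_2=3$, whose $\Delta$-slopes are $2,1$ but whose Newton-polygon slopes are $3/2, 3/2$). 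Monotonicity is also unnecessary. The paper instead invokes Lemma \ref{lemma:newton_slopes}: the identity $v_p(\Delta_{i+\Qfr}(w_\kappa)) = v_p(\Delta_i(w_\kappa)) + c$ makes $y_{i+\Qfr} - y_i$ affine in $i$, so there is a shear of the plane carrying $\{(i, y_i) : i\geq 0\}$ onto $\{(i, y_i) : i\geq \Qfr\}$ while preserving lower convex hulls; hence any breakpoint index $x \geq \Qfr$ has $x - \Qfr$ a breakpoint as well, which is precisely the periodicity needed to conclude. You should replace the monotonicity heuristic with an appeal to that lemma.
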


\begin{remark}\label{rmk:Qd-even}
If $\Qdp$ is chosen to be even then Theorem \ref{theorem:global-halo-progressions} is true with $Q = \lcm(\Qd,\Qdp/2,\Qddn)$ instead (see Remark \ref{remark:why-Qd-even-is-ok}). This happens to be the case in the $\Gamma_0(N)$-level example, for instance, and gives an optimal version of the result.
\end{remark}
\begin{remark}
The conclusion of Theorem \ref{theorem:global-halo-progressions} is also true if $w_\kappa \in \Zp$ but $v_p(w_\kappa) < v_0$ (Corollary \ref{cor:AP-theorem-halo-cases}). This is more general only if $p=2$.
\end{remark}

We now unravel the complicated constants in Theorem \ref{theorem:global-halo-progressions} for our examples.

\begin{sex-full}
Taking Remark \ref{rmk:Qd-even} into account, the number of progressions predicted in Theorem \ref{theorem:global-halo-progressions} when $r=0$ is 
\begin{equation*}
Q = 
\begin{cases}
\ds {p(p-1)(p+1)\mu_0(N)\over 24} & \text{if~}p>2,\\
\mu_0(N) & \text{if~}p=2,
\end{cases}
\end{equation*}
while the common difference of these progressions is
$$
\Qf \cdot \left( \frac{\Pd}{\Qd} - \frac{4 \Pdp}{\Qdp} + \frac{\Pddn}{\Qddn} \right)
= 
\begin{cases}
\ds {(p-1)^2 \over 2} & \text{if~}p>2,\\
1 & \text{if~}p=2.
\end{cases}
$$
Thus, Theorem \ref{theorem:global-halo-progressions} matches what was claimed in \cite[Theorem 4.2]{BergdallPollack-GhostPaperShort}. A small calculation shows that the number of progressions and common difference is completely consistent with what follows from \cite[Section 3]{BergdallPollack-FredholmSlopes}.
\end{sex-full}

\begin{sex-rhobar}
The quantity $Q$ is equal to $p\Qdp$. Thus the common difference given in Theorem \ref{theorem:global-halo-progressions} when $r=0$ is equal to
\begin{equation*}
p\Qd\left({p+1\over \Qd} - {4 \over \Qd} + {p+1 \over p\Qd}\right) = (p-1)^2.
\end{equation*}
This is twice the corresponding value in the $\Gamma_0(N)$-level example. Numerically though it appears that if we combine the $\rhobar$-slopes with the $\rhobar \otimes \omega^{(p-1)/2}$-slopes, the arithmetic progressions mesh nicely and cause this common difference to be cut in half.
\end{sex-rhobar}

\begin{remark}
\label{rmk:wadic}
As explained in \cite[Section 4]{BergdallPollack-GhostPaperShort}, the ghost series satisfies a halo property. Namely if $v_p(w_\kappa) < v_0$, then $v_p(w_{\kappa}-w_{k_n}) = v_p(w_\kappa)$ for all $n$ and so the scaling ${1\over v_p(w_\kappa)}\NP(G_\kappa)$ is independent of $\kappa$. More precisely, write $\bar G(t) \in \Fp[[w]][[t]]$ for the mod $p$ reduction of $G(w,t)$. Then we can compute $\NP(\bar G)$ with respect to the $w$-adic valuation on $\Fp[[w]]$, and it is clear that the common value of ${1\over v_p(w_\kappa)}\NP(G_\kappa)$ is equal to $\NP(\bar G)$. So, Theorem \ref{theorem:global-halo-progressions} implies that the slopes of $\NP(\overline{G})$ form a union of arithmetic progressions up to finitely many exceptional slopes. (Actually we will show this result first in the text below. See Corollary \ref{cor:AP-theorem-halo-cases}.)
\end{remark}

The remainder of this section is devoted to the proof of  Theorem \ref{theorem:global-halo-progressions}.  Our strategy (like in Section \ref{sec:distribution}) is to first verify a corresponding statement for $\Delta$-slopes. Then, we deduce the theorem by the following general lemma on Newton polygons.

\begin{lemma}
\label{lemma:newton_slopes}
Consider a collection $\cP = \{(i,y_i) : i\geq 0\}$ such that $y_i \in \R_{\geq 0}$.  If the $\Delta$-slopes of $\cP$ form a union of $C$ arithmetic progressions with the same common difference, then the same holds for the slopes of $\NP(\cP)$ up to finitely many exceptional slopes contained within the first $C$ slopes.
\end{lemma}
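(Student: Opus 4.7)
Set $\Delta_i = y_i - y_{i-1}$, so the hypothesis gives a common difference $d > 0$ with $\Delta_{i+C} = \Delta_i + d$ for all $i \geq 1$. Writing $u = mC + r$ with $m \geq 0$ and $0 \leq r < C$, iterating the periodicity yields the key structural formula
\begin{equation*}
y_u = Q(u) + c_r, \qquad Q(u) = \frac{d}{2C}\, u^2 + Lu + y_0,
\end{equation*}
where $L = \frac{S}{C} - \frac{d}{2}$, $S = \Delta_1 + \cdots + \Delta_C$, and $c_r = y_r - Q(r)$ depends only on the residue $r$. So $y$ is a strictly convex quadratic plus a bounded periodic correction.

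The central step will be to show that for $u$ sufficiently large, whether $(u, y_u)$ is an $\NP$ vertex depends only on $u \bmod C$. Recall $(u,y_u)$ is a vertex iff $y_u(u'' - u') \leq y_{u'}(u'' - u) + y_{u''}(u - u')$ for all $u' < u < u''$ in the domain. Writing $\alpha = u - u'$, $\beta = u'' - u$ and using the algebraic identity
\begin{equation*}
\beta\, Q(u - \alpha) + \alpha\, Q(u + \beta) - (\alpha + \beta)\, Q(u) = \frac{d}{2C}\, \alpha\beta(\alpha + \beta),
\end{equation*}
the vertex condition rearranges to
\begin{equation*}
\frac{d}{2C}\, \alpha\beta(\alpha+\beta) \;\geq\; \alpha(c_r - c_{r''}) + \beta(c_r - c_{r'}),
\end{equation*}
where $r, r', r''$ denote the residues mod $C$ of $u$, $u-\alpha$, $u+\beta$. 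The crucial feature is that $u$ has disappeared entirely: the condition depends only on the residues $r, r', r''$ and the offsets $\alpha, \beta$. Since the $c_r$ are uniformly bounded, the inequality holds automatically once $\alpha\beta$ exceeds a constant $M$ depending only on $d$, $C$, and $\max_r c_r - \min_r c_r$. Hence for $u \geq M$, vertex-ness of $(u,y_u)$ is determined by $r = u \bmod C$ alone, and the set $T = \{r_1 < \cdots < r_t\} \subseteq \{0, \dots, C-1\}$ of vertex residues stabilizes.

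Finally I would tally the slopes. A direct computation using $y_u = Q(u) + c_r$ shows that the slope between consecutive $\NP$ vertices $mC + r_j$ and its successor equals $md + A_j$ for a constant $A_j$ independent of $m$, with horizontal length $k_j = r_{j+1} - r_j$ (and $k_t = C + r_1 - r_t$ at the block transition). Since $\sum_j k_j = C$, each block contributes exactly $C$ slopes, and so the full slope sequence decomposes (with multiplicity) into $C$ arithmetic progressions $\{md + A_j\}_m$ of common difference $d$. The finitely many exceptional slopes arise from the forced initial vertex at $(0, y_0)$ and from indices $u < M$ where the periodic analysis has not yet engaged; a short case analysis places these within the first $C$ positions. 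The main obstacle is the middle step: the identity for $\beta Q(u-\alpha) + \alpha Q(u+\beta) - (\alpha+\beta) Q(u)$ is what eliminates the explicit $u$-dependence of the vertex condition and reduces the problem to a finite check per residue class.
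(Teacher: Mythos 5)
Your proof is correct, but it takes a genuinely different route than the paper's, which is a one-line observation. The paper's entire proof is: if $x \geq C$ is a breakpoint index of $\NP(\cP)$, then so is $x - C$. This is seen by noting that the points of $\cP$ with index $\geq C$, after shifting indices by $-C$ and subtracting the affine (in $i$) quantity $y_{i+C} - y_i$, coincide with $\cP$ itself; hence $\NP(\cP_{\geq C})$ is a sheared translate of $\NP(\cP)$, and a vertex of the full hull lying among those points is automatically a vertex of the hull of the subset. You instead make the structure explicit by writing $y_u = Q(u) + c_{u\bmod C}$ (quadratic plus bounded-periodic) and feed this into the vertex inequality, exploiting the identity $\beta Q(u-\alpha) + \alpha Q(u+\beta) - (\alpha+\beta)Q(u) = \tfrac{d}{2C}\alpha\beta(\alpha+\beta)$ to erase the explicit $u$-dependence. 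Both arguments hinge on the same underlying fact — that $y_u - y_{u-C}$ is affine in $u$ — but the paper packages it as a self-similarity of the Newton polygon under index shift and shear, whereas you package it as an explicit quadratic decomposition plus a finite residue check; yours is more computational and gives concrete slope formulas as a bonus, while the paper's is shorter and more geometric. One point to watch: both your writeup and the paper's assert, without spelling it out, that the exceptional slopes sit within the first $C$ positions. This is true, and the cleanest way to see it is actually via the paper's breakpoint observation: the first breakpoint $v$ with $v \geq C$ must satisfy $v < 2C$ (else $v - C \geq C$ would be an earlier one), and then $s_i = s_{i-C} + d$ for $i > v$ shows that the slopes from index $v-C+1 \leq C$ onward already fall into $C$ arithmetic progressions. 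Your ``short case analysis'' needs to supply an argument of this flavor; as written it is a flagged gap, comparable in size to what the paper's ``follows immediately'' sweeps under the rug.
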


\begin{proof}
This follows immediately from observing that if $x\geq C$ is the index of a breakpoint of $\NP(\cP)$, then $x-C$ is also the index of a breakpoint of $\NP(\cP)$.
\end{proof}

\subsection{Changes in $\lambda$-invariants}
Now we begin to use the assumption \eqref{QL}.
\begin{lemma}
\label{lemma:hzlz}
We have
\begin{enumerate}
\item $\HZ(\Delta^+_{i+\Qd}) = \HZ(\Delta^+_i) + \Pd$;
\item $\LZ(\Delta^+_{i+\Qdp}) = \LZ(\Delta^+_i) + 2\Pdp$;
\item $\HZ(\Delta^-_{i+\Qdp}) = \HZ(\Delta^-_i) + 2\Pdp$;
\item $\LZ(\Delta^-_{i+\Qddn}) = \LZ(\Delta^-_i) + \Pddn$.
\end{enumerate}
\end{lemma}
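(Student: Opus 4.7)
The plan is to reduce each of the four claims (a)--(d) to the quasi-linearity relation for a single one of the functions $\d$, $\d + \dn$, or $\ddp = 2\d + \dn$, all of which are quasi-linear with the fixed periods and defects by \eqref{QL}. The whole argument is really just bookkeeping with the definitions of $\HZ$ and $\LZ$, combined with \eqref{ND} (to identify the supremum/infimum with a point where the defining inequality flips) and the appropriate quasi-linearity relation.

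For (a), write $\HZ(\Delta_i^+) = \sup\{n : \d(n) < i\}$ and let $m$ be this supremum. By \eqref{ND} we have $\d(m) < i$ and $\d(m+1) \geq i$. Evaluating $\d(n + \Pd) = \d(n) + \Qd$ at $n = m$ and $n = m+1$ yields $\d(m+\Pd) < i + \Qd$ and $\d(m+\Pd+1) \geq i + \Qd$, so that $\HZ(\Delta_{i+\Qd}^+) = m + \Pd$. Part (d) is identical with $\d$ replaced by $\d + \dn$, using the strict inequality $i < \d(n) + \dn(n)$ defining $\LZ(\Delta_i^-)$ together with quasi-linearity of $\d + \dn$ with period $\Pddn$ and defect $\Qddn$.

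For (b) and (c), the first step is to clear denominators. The condition $i \leq \d(n) + \dn(n)/2$ defining $\LZ(\Delta_i^+)$ is equivalent to $2i \leq \ddp(n)$, and the condition $\d(n) + \dn(n)/2 + 1 \leq i$ defining $\HZ(\Delta_i^-)$ is equivalent to $\ddp(n) \leq 2i - 2$. Now apply quasi-linearity of $\ddp$: since $\ddp(n + 2\Pdp) = \ddp(n) + 2\Qdp$, a shift of $n$ by $2\Pdp$ translates into a shift of $2i$ by $2\Qdp$, i.e.\ a shift of $i$ by $\Qdp$. With this translation, the same infimum/supremum argument as in (a) gives (b) and (c).

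No real obstacle appears; the only point worth highlighting is the factor of $2$ multiplying $\Pdp$ in parts (b) and (c). It is forced by working with the integer inequality $2i \leq \ddp(n)$ rather than the half-integer inequality $i \leq \ddp(n)/2$: a single period shift $\Pdp$ of $n$ moves $\ddp$ by $\Qdp$, which is a half-integer change of $i$ that need not even be an integer when $\Qdp$ is odd, so we are forced to double the period to obtain the cleanest integer shift $\Qdp$ of~$i$.
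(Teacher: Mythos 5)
Your proof is correct and takes essentially the same approach as the paper, which argues part (a) via the translation bijection $n \mapsto n + \Pd$ induced by $\d(n+\Pd) = \d(n) + \Qd$ and dispatches (b)--(d) with ``the rest of the parts are similar.'' Your denominator-clearing step, rewriting $i \leq \d(n) + \dn(n)/2$ as $2i \leq \ddp(n)$, usefully makes explicit why the period doubles to $2\Pdp$ in parts (b) and (c), a point the paper returns to only in a later remark about what happens when $\Qdp$ is even.
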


\begin{proof}
For part (a), \eqref{QL} implies that $n \mapsto n+\Pd$ gives a bijection
$$
\{ n ~:~ \d(n) < i \} \to \{ n ~:~ \d(n) < i + \Qd \}.
$$
Thus the supremums of these two sets differ by $\Pd$, proving the claim. The rest of the parts are similar.
\end{proof}

Before the next proposition, recall that  $\lambda_i^\pm = \lambda(\Delta_i^\pm)$ and $\lambda_i=\lambda(\Delta_i) = \lambda^+_i-\lambda^-_i$.

\begin{proposition}
\label{prop:lambda_change}
Set $\Qplus = \lcm(\Qd,\Qdp)$, $\Qminus=\lcm(\Qdp,\Qddn)$ and $\Qf=\lcm(\Qplus,\Qminus)$. Then,
\begin{enumerate}
\item 
$\lambda^+_{i+\Qplus} = \lambda^+_i + \frac{\Qplus}{\Qd} \cdot  \Pd - 
\frac{\Qplus}{\Qdp} \cdot  2\Pdp$,
\item 
$\lambda^-_{i+\Qminus} = \lambda^-_i + 
\frac{\Qminus}{\Qdp} \cdot  2\Pdp - \frac{\Qminus}{\Qddn} \cdot  \Pddn$,
\item 
$\lambda_{i + \Qf} = \lambda_i + \Qf \cdot ( \frac{\Pd}{\Qd} - \frac{4 \Pdp}{\Qdp} + \frac{\Pddn}{\Qddn})$.
\end{enumerate}
\end{proposition}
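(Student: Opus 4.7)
The plan is to obtain each part as a direct consequence of Lemma \ref{lemma:hzlz}, combined with the identity $\lambda(\Delta_i^{\pm}) = \HZ(\Delta_i^{\pm}) - \LZ(\Delta_i^{\pm}) + 1$ from Lemma \ref{lemma:describe_delta}. Parts (a) and (b) are essentially the same calculation on opposite sides, and part (c) is a formal consequence of (a) and (b).

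For part (a), I would first iterate Lemma \ref{lemma:hzlz}(a) a total of $\Qplus/\Qd$ times to obtain
$\HZ(\Delta^+_{i+\Qplus}) = \HZ(\Delta^+_i) + (\Qplus/\Qd)\Pd$,
and similarly iterate part (b) of that lemma $\Qplus/\Qdp$ times to obtain
$\LZ(\Delta^+_{i+\Qplus}) = \LZ(\Delta^+_i) + (\Qplus/\Qdp)\cdot 2\Pdp$.
Subtracting (and noting the $+1$ cancels) gives the formula for $\lambda^+_{i+\Qplus}$. Part (b) is obtained in the same fashion, this time iterating parts (c) and (d) of Lemma \ref{lemma:hzlz} the appropriate numbers of times and subtracting.

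For part (c), the idea is to further iterate (a) $\Qf/\Qplus$ times and (b) $\Qf/\Qminus$ times, yielding
\begin{equation*}
\lambda^+_{i+\Qf} = \lambda^+_i + \Qf\left(\tfrac{\Pd}{\Qd} - \tfrac{2\Pdp}{\Qdp}\right), \qquad \lambda^-_{i+\Qf} = \lambda^-_i + \Qf\left(\tfrac{2\Pdp}{\Qdp} - \tfrac{\Pddn}{\Qddn}\right).
\end{equation*}
Using $\lambda_i = \lambda_i^+ - \lambda_i^-$ and subtracting one of the above equations from the other produces the stated formula for $\lambda_{i+\Qf}$.

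There is no real obstacle here; the content is entirely in Lemma \ref{lemma:hzlz}, which itself is a transparent consequence of \eqref{QL}. The only care needed is to observe that the divisibility relations $\Qd \mid \Qplus$, $\Qdp \mid \Qplus$, $\Qdp \mid \Qminus$, $\Qddn \mid \Qminus$, $\Qplus \mid \Qf$, $\Qminus \mid \Qf$ all hold by definition of $\Qplus$, $\Qminus$, and $\Qf$, so all the iteration counts are positive integers and the sums telescope cleanly.
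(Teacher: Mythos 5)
Your proof is correct and follows the same route as the paper, which simply states that parts (a) and (b) are immediate from Lemmas \ref{lemma:hzlz} and \ref{lemma:describe_delta} and that (c) follows from (a) and (b). Your version just spells out the iteration counts and the cancellation of the $+1$, which is exactly the intended reasoning.
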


\begin{proof}
Parts (a) and (b) follow immediately from Lemma \ref{lemma:hzlz} and Lemma  \ref{lemma:describe_delta}. Part (c) follows from (a) and (b).
\end{proof}

\begin{remark}\label{remark:why-Qd-even-is-ok}
Suppose that $\Qdp$ is even. Then parts (b) and (c) of Lemma \ref{lemma:hzlz} can be replaced by the stronger statements that $\LZ(\Delta^+_{i+\Qdp/2}) = \LZ(\Delta_i^+) + \Pdp$ and $\HZ(\Delta^-_{i+\Qdp/2}) = \HZ(\Delta_i^-) + \Pdp$. In turn, Proposition \ref{prop:lambda_change} remains true with $\Qplus = \lcm(\Qd, \Qdp/2)$ and $\Qminus = \lcm(\Qdp/2,\Qddn)$. These changes do not alter the proof of Theorem \ref{theorem:global-halo-progressions} below, so this justifies Remark \ref{rmk:Qd-even}.
\end{remark}

\begin{corollary}\label{cor:AP-theorem-halo-cases}
Theorem \ref{theorem:global-halo-progressions} is true if $v_p(w_\kappa) < v_0$ (even if $w_\kappa \in \Zp$).
\end{corollary}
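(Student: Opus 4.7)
The plan is to exploit that, under the hypothesis $v_p(w_\kappa) < v_0$, every $\Delta$-slope of $G_\kappa$ reduces to $\lambda_i \cdot v_p(w_\kappa)$, after which the periodicity of $\lambda_i$ from Proposition \ref{prop:lambda_change}(c) together with Lemma \ref{lemma:newton_slopes} gives the conclusion almost for free.

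The key ultrametric observation, already flagged in Remark \ref{rmk:wadic}, is that every integer weight satisfies $v_p(w_{k_n}) \geq v_0$ by the normalization of the coordinate, so the hypothesis $v_p(w_\kappa) < v_0$ forces
\[
v_p(w_\kappa - w_{k_n}) = v_p(w_\kappa) \quad\text{for every } n,
\]
independently of whether $w_\kappa$ lies in $\Zp$. In particular $w_\kappa$ is never a root of any $\Delta_i^\pm$, so the starred variant $\Delta_i^\ast$ is unnecessary. Using the product description of $\Delta_i^\pm$ from Lemma \ref{lemma:describe_delta}, this yields
\[
v_p(\Delta_i(w_\kappa)) = (\lambda_i^+ - \lambda_i^-)\cdot v_p(w_\kappa) = \lambda_i \cdot v_p(w_\kappa).
\]
Proposition \ref{prop:lambda_change}(c) now gives $\lambda_{i+\Qf} - \lambda_i = \Qf \cdot C$ with $C = \tfrac{\Pd}{\Qd} - \tfrac{4\Pdp}{\Qdp} + \tfrac{\Pddn}{\Qddn}$. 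Multiplying by $v_p(w_\kappa)$, the $\Delta$-slopes of $G_\kappa$, partitioned by residue class modulo $\Qf$, form $\Qf$ arithmetic progressions with common difference $\Qf C \cdot v_p(w_\kappa)$. Lemma \ref{lemma:newton_slopes} then transfers this to the genuine Newton polygon slopes, up to finitely many exceptions concentrated among the first $\Qf$ slopes.

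It remains to match the bookkeeping in Theorem \ref{theorem:global-halo-progressions}. In this regime I interpret $\alpha_\kappa := v_p(w_\kappa)$: when $w_\kappa \notin \Zp$, this equality follows directly from the sup definition since integer-weight approximations never improve on $v_p(w_\kappa)$ once $v_p(w_\kappa) < v_0$; when $w_\kappa \in \Zp$ (which can only happen for $p=2$, by the integrality of $v_p$ on $\Zp$), extend $\alpha_\kappa$ by this formula. Then $r := \lfloor \alpha_\kappa \rfloor < v_0$, so $\Qfr = \Qf$ by \eqref{eqn:defn-Qr}, and the sum $\sum_{v=v_0}^r (p-1)p^{r-v} v$ is vacuous. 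The theorem's prescribed common difference collapses to $\Qf C \cdot v_p(w_\kappa)$, which is exactly what we derived. No step is a real obstacle; the only mild subtlety is extending the definition of $\alpha_\kappa$ to $w_\kappa \in \Zp$, which the hypothesis $v_p(w_\kappa) < v_0$ makes natural.
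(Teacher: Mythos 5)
Your core argument is exactly the paper's: when $v_p(w_\kappa) < v_0$ every factor $w_\kappa - w_{k_n}$ has valuation $v_p(w_\kappa)$, so $v_p(\Delta_i(w_\kappa)) = \lambda_i\cdot v_p(w_\kappa)$, and Proposition~\ref{prop:lambda_change}(c) together with Lemma~\ref{lemma:newton_slopes} finish the job. The paper phrases this by passing to the mod $p$ reduction $\bar G$ via Remark~\ref{rmk:wadic} rather than working with $v_p(\Delta_i(w_\kappa))$ directly, but these are the same computation.

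One caution on the extra bookkeeping you added. Your justification for $\alpha_\kappa = v_p(w_\kappa)$ appeals only to integer-weight approximations, but $\alpha_\kappa$ is a supremum over all of $\Zp$, not just the $w_{k_n}$ (which lie in $p\Zp$, resp.\ $8\Z_2$). For $p$ odd this does not matter: since $v_0=1$ and $0 < v_p(w_\kappa)<1$, any $w'\in\Zp$ has integer valuation, and the ultrametric inequality gives $v_p(w_\kappa - w') \le v_p(w_\kappa)$. But for $p=2$ one can have $v_p(w_\kappa)\in\{1,2\}$, and then a $w'\in\Z_2$ of the same (integer) valuation can produce $v_p(w_\kappa - w')>v_p(w_\kappa)$; e.g.\ $w_\kappa = 2+2^{3/2}$ has $v_2(w_\kappa)=1<v_0=3$ yet $\alpha_\kappa=3/2$. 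In such cases your derived common difference $QC\cdot v_p(w_\kappa)$ is the correct one, while the theorem's displayed formula reads $QC\cdot\alpha_\kappa$; the paper's own proof of the corollary (via $\bar G$) proves only the qualitative conclusion (slopes form $Q$-many progressions) and does not actually verify the displayed constant, so this subtlety is latent there too. It is worth being aware of, but it does not affect the validity of the corollary as it is used in the proof of Theorem~\ref{theorem:global-halo-progressions}.
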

\begin{proof}
By Remark \ref{rmk:wadic} it is enough to prove the analogous statement about $\NP(\bar G)$ (for $\bar G$ described in that remark). But Proposition \ref{prop:lambda_change}(c) implies that the $\Delta$-slopes of $\NP(\bar G)$, which are the $\lambda_i$, form $Q$-many arithmetic progressions of common difference $\Qf \cdot ( \frac{\Pd}{\Qd} - \frac{4 \Pdp}{\Qdp} + \frac{\Pddn}{\Qddn})$. So the corollary follows from Lemma \ref{lemma:newton_slopes}.
\end{proof}

\subsection{Proof of Theorem \ref{theorem:global-halo-progressions}}
To give the proof of Theorem \ref{theorem:global-halo-progressions} (beyond Corollary \ref{cor:AP-theorem-halo-cases}) we will use the proposition above and the next three lemmas.

\begin{lemma}\label{lemma:minimal-valuations}
Suppose that $x_0 \in \mathcal O_{\mathbf C_p} - \Zp$. Then, for any choice of $x \in \Zp$ such that $v_p(x_0 - x) = \sup_{y\in\Zp} v_p(x_0 - y)$ we have
\begin{equation*}
v_p(x_0 - x') = \min(v_p(x_0-x),v_p(x-x'))
\end{equation*}
for all $x' \in \Zp$.
\end{lemma}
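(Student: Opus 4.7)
The plan is to set $\alpha = v_p(x_0 - x)$, which by hypothesis equals $\sup_{y \in \Z_p} v_p(x_0 - y)$, and then analyze three cases based on how $v_p(x-x')$ compares to $\alpha$. Before that, I would observe that $\alpha$ is finite: since $\Z_p$ is closed in $\C_p$ and $x_0 \nin \Z_p$, we have $\inf_{y \in \Z_p}|x_0 - y|_p > 0$, so the supremum cannot be $\infty$. (This is needed to make sense of the claim that the sup is attained, although the statement only requires us to pick some $x$ realizing the sup.)

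Writing the telescoping identity $x_0 - x' = (x_0 - x) + (x - x')$, if $v_p(x - x') \neq \alpha$ the strong form of the ultrametric inequality applies directly and yields
\begin{equation*}
v_p(x_0 - x') = \min\bigl(v_p(x_0 - x),\, v_p(x - x')\bigr),
\end{equation*}
which is exactly the claimed identity.

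The only case requiring extra input is $v_p(x - x') = \alpha$. Here the ordinary ultrametric inequality gives only the lower bound $v_p(x_0 - x') \geq \alpha$. However, since $x' \in \Z_p$, the defining property of the supremum forces $v_p(x_0 - x') \leq \alpha$, and so $v_p(x_0 - x') = \alpha = \min(\alpha,\alpha)$ as required.

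I do not anticipate any real obstacle; the entire content of the lemma is that when two valuations tie in the strong ultrametric, the maximality of $\alpha$ among $v_p(x_0 - y)$ for $y \in \Z_p$ is what pins the answer down to the minimum rather than possibly something strictly larger. Once that observation is made, the three-case argument is immediate.
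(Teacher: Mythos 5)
Your proof is correct and is essentially the same as the paper's: both arguments rest on the two facts that $v_p(x_0-x') \leq v_p(x_0-x)$ by maximality, and that the ultrametric inequality is an equality whenever the two terms have distinct valuations. The paper organizes the case split on $v_p(x_0-x')$ versus $v_p(x_0-x)$ while you split on $v_p(x-x')$ versus $v_p(x_0-x)$, but this is a cosmetic difference.
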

\begin{proof}
Let $s=v_p(x_0-x)$, $t=v_p(x-x')$, and $u=v_p(x_0-x')$.  By the choice of $x$, $u \leq s$.  If $u<s$, then $t=u<s$ by the ultrametric (in)equality, so $u=\min(s,t)$.  If $u=s$, then the ultrametric inequality implies $t\geq u=s$, so $u=\min(s,t)$ again.
\end{proof}

\begin{lemma}\label{lemma:easy}
Suppose that $k_1,\dotsc,k_M$ is an ordered list of integers which form an arithmetic progression of length $M$ and difference co-prime to $p$. Then, if $p^e \dvd M$ and $D = M/p^e$, we have
\begin{enumerate}
\item $\sizeof\set{k_i \st v_p(k_i) \geq e} = D$, and
\item if $0 \leq v < e$ then $\sizeof \set{k_i \st v_p(k_i) = v} = D\varphi(p^{e-v}) = D(p-1)p^{e-v-1}$.
\end{enumerate}
\end{lemma}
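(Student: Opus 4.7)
The plan is to reduce both parts to a single uniform counting statement: for any integer $v$ with $0 \leq v \leq e$, exactly $M/p^v = Dp^{e-v}$ of the $k_i$ satisfy $v_p(k_i) \geq v$. Once this is established, part (a) is just the case $v=e$, and part (b) follows from the identity
\begin{equation*}
\sizeof\set{k_i \st v_p(k_i) = v} = \sizeof\set{k_i \st v_p(k_i) \geq v} - \sizeof\set{k_i \st v_p(k_i) \geq v+1},
\end{equation*}
which gives $Dp^{e-v} - Dp^{e-v-1} = D(p-1)p^{e-v-1} = D\varphi(p^{e-v})$; note we are allowed to apply the counting to $v+1$ because the hypothesis $v < e$ forces $v+1 \leq e$.

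To prove the uniform counting statement, let $d$ denote the common difference of the progression, so that $k_i = k_1 + (i-1)d$ with $\gcd(d,p) = 1$. The condition $v_p(k_i) \geq v$ is equivalent to $k_1 + (i-1)d \equiv 0 \pmod{p^v}$, and since $d$ is a unit modulo $p^v$, this pins $i-1$ down to a single residue class modulo $p^v$. As $i$ ranges over $\{1,2,\dotsc,M\}$ and $p^v$ divides $M$ (because $v \leq e$), the number of such $i$ is exactly $M/p^v = Dp^{e-v}$.

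There is no real obstacle here; the argument is elementary counting. The only subtlety worth stating carefully is that the coprimality of $d$ with $p$ is precisely what makes $d$ invertible modulo every $p^v$ appearing in the proof, which is what turns a divisibility condition into a single residue condition on $i$.
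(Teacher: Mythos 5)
The paper leaves this lemma to the reader, so there is no reference proof to compare against; your argument is correct and is the natural one. The reduction to the uniform count ``$\sizeof\set{k_i \st v_p(k_i) \geq v} = M/p^v$ for $0 \leq v \leq e$'' cleanly yields (a) as the case $v = e$ and (b) by subtraction, and the key observation that coprimality of $d$ to $p$ turns the congruence $k_1 + (i-1)d \equiv 0 \pmod{p^v}$ into a single residue class for $i$ is exactly right; the hypothesis $p^v \dvd M$ then guarantees that the $M$ consecutive indices hit that class exactly $M/p^v$ times.
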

\begin{proof}
This is left to the reader.
\end{proof}

Now fix $\kappa \in \mathscr W_{k_0}$ such that $w_\kappa \nin \Zp$. Then, we define $\alpha_\kappa = \sup_{w \in \Zp} v_p(w_\kappa -w)$.
\begin{lemma}\label{lemma:k-plus-weight}
If $v_p(w_\kappa) \geq v_0$, then there exists an integer $k^+ \in \mathscr W_{k_0}$ such that $v_p(w_\kappa - w_{k^+}) = \alpha_\kappa$.
\end{lemma}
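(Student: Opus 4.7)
The plan is to reduce to approximating an arbitrary element of $\Z_p$ by $w$-coordinates of integer weights in $\mathscr{W}_{k_0}$, and then transfer the conclusion back to $\kappa$ using Lemma~\ref{lemma:minimal-valuations}. First, since $\Z_p$ is compact and $w' \mapsto p^{-v_p(w_\kappa - w')}$ is continuous, the supremum defining $\alpha_\kappa$ is attained at some $w'_0 \in \Z_p$; that is, $v_p(w_\kappa - w'_0) = \alpha_\kappa$. The hypothesis $v_p(w_\kappa) \geq v_0$, together with the obvious inequality $\alpha_\kappa \geq v_p(w_\kappa - 0) = v_p(w_\kappa)$ and the ultrametric inequality, forces $v_p(w'_0) \geq v_p(w_\kappa) \geq v_0$ as well.

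Next, I would establish that the set $S_{k_0} := \{w_k : k \in k_0 + \delta\Z\}$ of integer-weight coordinates is dense in $\{w \in \Z_p : v_p(w) \geq v_0\}$. For odd $p$, this reduces, via $w_k = \gamma^k - 1$, to checking that $\gamma^{p-1}$ topologically generates $1+2p\Z_p$, which is immediate from $\gcd(p-1,p) = 1$ together with $\gamma$ being a topological generator of the pro-$p$ group $1+2p\Z_p$. For $p = 2$, one instead notes that $v_2(\gamma^2 - 1) = v_2(\gamma-1)+1 = 3$, so $\gamma^2$ topologically generates $1 + 8\Z_2$; the normalization involving $\gamma^{-1}$ in the case $k_0 = 1$ only shifts the orbit by a unit factor and leaves the density conclusion intact. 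Applying density to $w'_0$, pick $k^+ \in k_0 + \delta\Z$ with $v_p(w_{k^+} - w'_0) > \alpha_\kappa$.

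Finally, Lemma~\ref{lemma:minimal-valuations}, applied with $x_0 = w_\kappa$, $x = w'_0$, and $x' = w_{k^+}$, yields
\begin{equation*}
v_p(w_\kappa - w_{k^+}) = \min\bigl(v_p(w_\kappa - w'_0),\, v_p(w'_0 - w_{k^+})\bigr) = \min\bigl(\alpha_\kappa,\, v_p(w'_0 - w_{k^+})\bigr) = \alpha_\kappa,
\end{equation*}
which is the desired equality. The only genuinely delicate step is the density claim in the second paragraph; everything else is a routine application of the ultrametric inequality and compactness of $\Z_p$.
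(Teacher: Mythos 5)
Your proof is correct and follows the same approach as the paper: the paper's proof is the one-liner ``This follows from the density of the $w_k$ in $p\Z_p$ if $p$ is odd or $8\Z_2$ if $p=2$,'' and your argument simply spells out that density claim (via the fact that $\gamma^\delta$ topologically generates $1+2p\Z_p$), together with the attainment of $\alpha_\kappa$ by compactness and the transfer via Lemma~\ref{lemma:minimal-valuations}.
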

\begin{proof}
This follows from the density of the $w_k$ in $p\Zp$ if $p$ is odd or $8\Z_2$ if $p=2$.
\end{proof}

And now we complete the proof of Theorem \ref{theorem:global-halo-progressions}.

\begin{proof}[Proof of Theorem \ref{theorem:global-halo-progressions}]
Recall that $r = \floor{\alpha_\kappa}$. By Corollary \ref{cor:AP-theorem-halo-cases} we may assume that $v_p(w_\kappa)\geq v_0$. In particular, $r\geq 1$ if $p$ is odd and $r \geq 3$ if $p = 2$. Define $e = r$ for $p$ odd and $e = r-2$ for $p=2$, and then $Q_r = p^e Q$ regardless of $p$ ($Q_r$ as in \eqref{eqn:defn-Qr}). By Lemma \ref{lemma:k-plus-weight} we also fix an integer $k^+ \in \mathscr W_{k_0}$ such that $\alpha_\kappa = v_p(w_\kappa - w_{k^+})$. Then, from Lemma \ref{lemma:minimal-valuations} we have that
\begin{equation}\label{eqn:reminder-min}
v_p(w_{\kappa}-w_{k_n}) = \min(\alpha_\kappa, v_p(w_{k_n}-w_{k^+}))
\end{equation}
for all integers $n$.

We are going to apply Lemma \ref{lemma:newton_slopes} so our goal is to compare the $(i+\Qfr)$-th $\Delta$-slope with the $i$-th $\Delta$-slope in weight $\kappa$. That is, we must compare $v_p(\Delta_{i+\Qfr}(w_{\kappa}))$ with $v_p(\Delta_{i}(w_{\kappa}))$. To ease notation, define $\DD^\pm$ and $D$ to be those constants in Proposition \ref{prop:lambda_change} which satisfy 
$$
\lambda^\pm_{i+Q} = \lambda^\pm_i + \DD^\pm
\and
\lambda_{i+Q} = \lambda_i + \DD
$$
(so $D = D^+ - D^-$).  We observe that $\lambda_{i+\Qfr}^{\pm} =  \lambda_i^{\pm} + p^e \DD^\pm$, and so $\Delta^\pm_{i+\Qfr}$ has $p^e \DD^\pm$ more zeroes than $\Delta_i^\pm$. Thus we can write $\Delta_{i+\Qfr}^\pm = a^\pm\cdot b^\pm$ with $a^\pm,b^\pm \in \Z_p[w]$ products of linear factors with $a^\pm$ vanishing at largest $\lambda_i^{\pm}$-many zeros of  $\Delta_{i+\Qfr}^\pm$ and $b^\pm$ vanishing at the remaining $p^eD^\pm$-many.

Now let $h$ be one of the polynomials $h \in \set{a^{\pm},b^{\pm},\Delta_i^{\pm}}$. The roots of $h$ are all of the form $w_{k_n}$ for a consecutive list of integers $n$ (by the construction of $a^{\pm}$ and $b^{\pm}$ in particular). It also follows from \eqref{eqn:reminder-min} that
\begin{multline}\label{eqn:h-equation-halo}
v_p(h(w_\kappa)) = \alpha_\kappa \cdot \sizeof \set{k_n \st h(w_{k_n}) = 0 \text{ and } v_p(w_{k_n}-w_{k^+}) \geq r+1}\\
+ \sum_{v=0}^r v\cdot \sizeof \set{k_n \st h(w_{k_n}) = 0 \text{ and } v_p(w_{k_n} -w_{k^+}) = v}.
\end{multline}
\begin{claim}
$v_p(a^{\pm}(w_\kappa)) = v_p(\Delta_i^{\pm}(w_\kappa))$
\end{claim}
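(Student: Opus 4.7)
The plan is to realize $\Delta_i^{\pm}$ and $a^{\pm}$ as products $\prod_{n \in R}(w - w_{k_n})$ over two consecutive-integer ranges $R_\Delta^{\pm}$ and $R_a^{\pm}$ of equal length that differ only by a translation, and then to exploit the fact that this translation is by a multiple of $p^e$ while the formula \eqref{eqn:h-equation-halo} is invariant under such translations.

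First I would pin down the ranges. By Lemma \ref{lemma:describe_delta}, $R_\Delta^+ = \{\LZ(\Delta_i^+),\dots,\HZ(\Delta_i^+)\}$, while by construction $R_a^+$ is the top $\lambda_i^+$ integers inside $\{\LZ(\Delta_{i+\Qfr}^+),\dots,\HZ(\Delta_{i+\Qfr}^+)\}$; Lemma \ref{lemma:hzlz}(a) then gives
$$
R_a^+ = R_\Delta^+ + S^+, \qquad S^+ = \HZ(\Delta_{i+\Qfr}^+) - \HZ(\Delta_i^+) = \frac{\Qfr}{\Qd}\cdot \Pd.
$$
The definition \eqref{eqn:defn-Qr} of $\Qfr$ and the standing reduction $r \geq v_0$ made earlier in the proof show $\Qfr = p^e Q$, so $p^e \mid S^+$. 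The same reasoning, now invoking Lemma \ref{lemma:hzlz}(c), produces $R_a^- = R_\Delta^- + S^-$ with $S^- = (\Qfr/\Qdp)\cdot 2\Pdp$ in the minus case, again a multiple of $p^e$.

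For the main step, I would plug both ranges into \eqref{eqn:h-equation-halo}. Writing $k^+ = k_{n^+}$ and combining Remark \ref{remark:w-to-k-change} with $k_n - k^+ = \delta(n-n^+)$ yields the uniform identity $v_p(w_{k_n} - w_{k^+}) = v_0 + v_p(n - n^+)$, so the counts appearing in \eqref{eqn:h-equation-halo} become $\#\{n \in R : v_p(n - n^+) \geq e\}$ (for the $\alpha_\kappa$ term) and $\#\{n \in R : v_p(n - n^+) = v - v_0\}$ for $v_0 \leq v \leq r$. In the latter range, $v - v_0 \in \{0,1,\dots,e-1\}$, so each such count depends only on the residues of $R$ modulo $p^e$. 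Translating $R$ by the $p^e$-multiple $S^{\pm}$ leaves those residues intact, so the counts for $R_a^{\pm}$ and $R_\Delta^{\pm}$ agree, and \eqref{eqn:h-equation-halo} yields the claim. The main obstacle — and the reason the definition \eqref{eqn:defn-Qr} of $\Qfr$ takes the form it does — is precisely the divisibility $p^e \mid \Qfr$; once this is in hand, the rest is just bookkeeping with the ultrametric.
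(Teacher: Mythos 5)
Your argument is correct and follows the same approach as the paper's proof: identify $R_a^{\pm}$ as a translate of $R_\Delta^{\pm}$ by an integer $S^{\pm}$ divisible by $p^e$ (via Lemma \ref{lemma:hzlz} and $\Qfr = p^e Q$), and then observe that formula \eqref{eqn:h-equation-halo}, rewritten via $v_p(w_{k_n}-w_{k^+}) = v_0 + v_p(n-n^+)$, depends only on the residues of the index set modulo $p^e$. The paper phrases the congruence at the level of the ordered lists of $w_{k_n}$ being congruent modulo $p^{r+1}$, which is equivalent to your residue-mod-$p^e$ bookkeeping after Remark \ref{remark:w-to-k-change}; you have simply spelled out the "direct examination" the paper refers to.
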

To prove the claim, first note that by construction $a^{\pm}$ and $\Delta_i^{\pm}$ have the same number of zeroes and both sets of zeroes form an arithmetic progression with the same common difference. Second, Lemma \ref{lemma:hzlz} implies that $\HZ(a^{\pm}) = \HZ(\Delta_{i+Q_r}^{\pm}) \congruent \HZ(\Delta_i^{\pm}) \bmod p^e$. It follows (remember Remark \ref{remark:w-to-k-change}) that the ordered lists of $w_{k_n}$ for which $a^{\pm}(w_{k_n}) = 0$ and $\Delta_i^{\pm}(w_{k_n}) = 0$ are congruent to each other (as ordered lists) modulo $p^{r+1}$ (regardless of $p$). So the claim follows by direct examination of the right-hand side of \eqref{eqn:h-equation-halo}. 

The claim being shown, we have that
\begin{equation*}
v_p\left(\Delta_{i+Q_r}^{\pm}(w_\kappa)\over \Delta_i^{\pm}(w_\kappa)\right) = v_p(b^{\pm}(w_\kappa)).
\end{equation*}
For clarity, let us assume that $p>2$ now. Then, $b^{\pm}$ has $p^r D^{\pm}$-many zeros $w_{k_n}$ and the $k_n$ lie in an arithmetic progression of difference co-prime to $p$. In particular, the same is true for the $p^rD^{\pm}$-many $k_n - k^+$. Since $p$ is odd we deduce from \eqref{eqn:h-equation-halo} and Lemma \ref{lemma:easy} that
\begin{equation*}
v_p(b^{\pm}(w_\kappa)) = D^{\pm}\left(\alpha_\kappa + \sum_{v=0}^r v \cdot (p-1)p^{r-v} \right).
\end{equation*}
The $v=0$ part of this sum is clearly zero, and since $D = D^+ - D^-$, we see that
\begin{equation*}
v_p\left(\Delta_{i+Q_r}(w_\kappa)\over \Delta_i(w_\kappa)\right) = D\left(\alpha_\kappa + \sum_{v=1}^r v(p-1)p^{r-v}\right).
\end{equation*}
Applying Lemma \ref{lemma:newton_slopes} finishes the proof when $p > 2$.

If $p=2$ one must be slightly more careful in the previous paragraph. The sequence $k_n-k^+$ is an arithmetic progression of length $2^e D^{\pm} = 2^{r-2} D^{\pm}$ but the common difference is $p=2$. However, the elements in the sequence are all even, so one can apply Lemma \ref{lemma:easy} to the sequence ${k_n-k^+\over 2}$ instead. Using that $v_2(w_{k_n}-w_{k^+}) = 2 + v_2(k_n-k^+)$, it is straightforward to check that from \eqref{eqn:h-equation-halo} that
\begin{equation*}
v_2(b^{\pm}(w_\kappa)) = D^{\pm}\left(\alpha_\kappa + \sum_{v=3}^r v \cdot 2^{r-v}\right).
\end{equation*}
The final result follows in the same manner as before.
\end{proof}

\section{Example:\ cuspforms of level $\Gamma_0(N)$}\label{sec:full-space}
Let $\Gamma = \Gamma_0(N)$ and $\Gamma_0 = \Gamma_0(Np)$ for $p \nmid N$. In this section, we verify that `dimensions' of spaces of cuspforms of level $\Gamma$ and $\Gamma_0$ satisfy the axioms \eqref{ND} and \eqref{QL}, with explicit constants.

To begin recall (\cite[Section 6.1]{Stein-ModularForms}) that for $k > 2$ and even, we have
\begin{footnotesize}
\begin{multline}\label{eqn:dim_Gamma0(N)}
\dim S_k(\Gamma)
=
\frac{(k-1)\mu_0(N)}{12} 
+ \left( \bfloor{\frac{k}{4}} - \frac{k-1}{4} \right) \mu_{0,2}(N)\\
+ \left( \bfloor{\frac{k}{3}} - \frac{k-1}{3} \right) \mu_{0,3}(N)
- \frac{c_0(N)}{2},
\end{multline}
\end{footnotesize}
and
\begin{footnotesize}
\begin{multline}
\label{eqn:dim_new}
\dim S_{k}(\Gamma_0)^{p-\new} = {(k-1)(p-1)\over 12}\mu_0(N) + \left(\bfloor{{k\over 4}} - {k-1\over 4}\right)\left(-1 + \left({-4 \over p}\right)\right)\mu_{0,2}(N)\\ + \left(\bfloor{{k\over 3}} - {k-1\over 3}\right)\left(-1 + \left({-3 \over p}\right)\right)\mu_{0,3}(N).
\end{multline}
\end{footnotesize}

Here we use standard notations:\ $\mu_0(N)$ is the index of $\Gamma_0(N)$ in $\SL_2(\Z)$, $c_0(N)$ is the number of cusps on $X_0(N)$, $\mu_{0,2}(N)$ is the number of order two elliptic points on $X_0(N)$, $\mu_{0,3}(N)$ is the number of order three elliptic points on $X_0(N)$, and $\left({a\over b}\right)$ is the Kronecker symbol.

For $k \in \Z$, write $D_k$ for the right-hand side of \eqref{eqn:dim_Gamma0(N)} and $D_k^{\new}$ for the right-hand side of \eqref{eqn:dim_new}.  Fix an even $k_0$ such that $2 \leq k_0 < p+1$, set $k_n = k_0 + n(p-1)$ and finally set
$$
\d(n) = D_{k_n}
\and
\dn(n) = D_{k_n}^{\new}.
$$
We will verify now that \eqref{QL} and \eqref{ND} hold, provided that $pN > 3$.

\subsubsection{The condition \eqref{QL}}

\begin{proposition}
\label{prop:fullQL}
The functions $\d$ and $\dn$  are quasi-linear. Moreover,
\begin{equation*}
P_d = P_d^{\new} = {12\over \gcd(12,\delta)} \and Q_{d} = {\delta \mu_0(N)\over \gcd(12,\delta)} \and Q_{d^{\new}} = (p-1)Q_d
\end{equation*}
for any $p$. We have the following periods and defects for $d+d^{\new}$ and $d_p$:
\begin{equation*}
P_{d+d^{\new}} = \begin{cases}
P_d & \text{if $p > 3$;}\\
2 & \text{if $p=3$;}\\
3 & \text{if $p=2$,}
\end{cases}
\and
Q_{d + d^{\new}} = \begin{cases}
pQ_d & \text{if $p > 3$;}\\
\mu_0(N) & \text{if $p=2,3$,}
\end{cases}
\end{equation*}
whereas
\begin{equation*}
P_{d_p} = \begin{cases}
1 & \text{if $p > 3$;}\\
p & \text{if $p=2,3$,}
\end{cases}
\and
Q_{d_p} = \begin{cases}
{p(p-1)(p+1)\over 12}\mu_0(N) & \text{if $p > 3$;}\\
(p-1)\mu_0(N) & \text{if $p=2,3$.}
\end{cases}
\end{equation*}
\end{proposition}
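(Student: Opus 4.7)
The plan is to exploit a common structural decomposition of the two dimension formulas \eqref{eqn:dim_Gamma0(N)} and \eqref{eqn:dim_new}: each is the sum of a linear function of $k$ and a correction that depends only on $k$ modulo $12$. Indeed, $\bfloor{k/4} - (k-1)/4$ depends only on $k \bmod 4$ and $\bfloor{k/3} - (k-1)/3$ depends only on $k \bmod 3$. Writing $D_k = L(k) + \pi(k)$ with $L$ linear of slope $\mu_0(N)/12$ and $\pi$ periodic of period dividing $12$, I would substitute $k = k_n = k_0 + n\delta$, which makes $\pi(k_n)$ periodic in $n$ with period dividing $P := 12/\gcd(12,\delta)$. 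Consequently $d(n+P) - d(n) = P \cdot \delta \cdot \mu_0(N)/12 = \delta\mu_0(N)/\gcd(12,\delta)$, giving the quasi-linearity of $d$ with the stated $\Pd$ and $\Qd$. The same argument, with linear coefficient $(p-1)\mu_0(N)/12$ and periodic coefficients $-1 + \left(\frac{-4}{p}\right)$ and $-1 + \left(\frac{-3}{p}\right)$, handles $d^{\new}$ and yields $Q_{d^{\new}} = (p-1)\Qd$.

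For the sums $d_p = 2d + d^{\new}$ and $d + d^{\new}$, adding the two formulas merges the periodic corrections into new coefficients. In $d_p$ the $\mu_{0,2}$-correction carries the factor $c_2 := 1 + \left(\frac{-4}{p}\right)$ and the $\mu_{0,3}$-correction the factor $c_3 := 1 + \left(\frac{-3}{p}\right)$, while in $d + d^{\new}$ those factors become $\left(\frac{-4}{p}\right)$ and $\left(\frac{-3}{p}\right)$. The linear slopes are $(p+1)\mu_0(N)/12$ for $d_p$ and $p\mu_0(N)/12$ for $d + d^{\new}$. For $p > 3$, a case split on $p \bmod 4$ and $p \bmod 3$ handles $d_p$: when $p \equiv 1 \pmod 4$ then $c_2 = 2$ but $4 \mid \delta = p-1$, so $k_n \bmod 4$ is already constant; when $p \equiv 3 \pmod 4$ then $c_2 = 0$ outright. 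Either way the $\mu_{0,2}$-contribution to $d_p$ is constant in $n$, and the analogous analysis of $c_3$ makes $d_p$ linear in $n$, giving $P_{d_p} = 1$ and $Q_{d_p}$ equal to $\delta \cdot (p+1)\mu_0(N)/12$. For $d + d^{\new}$ with $p > 3$ both Kronecker symbols are nonzero, so the periodic parts persist; then $P_{d+d^{\new}} = \Pd$ and the defect is additive, $Q_{d+d^{\new}} = p\Qd$.

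For $p \in \{2, 3\}$ exactly one of $\left(\frac{-4}{p}\right)$ and $\left(\frac{-3}{p}\right)$ vanishes and the other equals $\pm 1$, so exactly one of the two periodic parts survives in each of $d+d^{\new}$ and $d_p$. The period in $n$ is then controlled by how $k_n \bmod 4$ or $k_n \bmod 3$ cycles with $\delta = 2$: for $p = 3$ one gets $\mu_{0,2}$ surviving in $d+d^{\new}$ (period $2$ in $n$, since $\gcd(2,4) = 2$) and $\mu_{0,3}$ surviving in $d_p$ (period $3$ in $n$, since $\gcd(2,3) = 1$); the two roles swap for $p = 2$. In each case the defect is obtained by multiplying the surviving linear slope by $P\delta$, producing the stated values.

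The main obstacle is the bookkeeping of the $p \bmod 12$ cases, which is a finite computation but requires care to distinguish when a periodic factor is constant because the coefficient $c_i$ vanishes from when it is constant because $k_n \bmod i$ does not vary along the progression $k_n = k_0 + n\delta$. Once this is organized into a short table, all six quasi-linearity claims follow from the single identity $D_k = L(k) + \pi(k)$ and the evaluation of the respective linear coefficients. Sharpness of the stated periods (when $pN > 3$) can be checked by exhibiting one pair $n, n+1$ where $d(n+1) - d(n) \neq \Qd/\Pd$, which uses nothing beyond the nonvanishing of the surviving periodic correction.
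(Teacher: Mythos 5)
Your proof is correct and its engine is the same as the paper's for $d$, $d^{\new}$, and $d + d^{\new}$: write each dimension formula as a linear term plus a piece that depends only on $k \bmod 12$, substitute $k = k_0 + n\delta$, and read off the period $12/\gcd(12,\delta)$ and the defect from the slope of the linear term. Where you diverge from the paper is in the treatment of $d_p$. You work directly from $d_p = 2d + d^{\new}$, merge the two $\mu_{0,2}$- and $\mu_{0,3}$-corrections into coefficients $1 + \left(\frac{-4}{p}\right)$ and $1 + \left(\frac{-3}{p}\right)$, and then split on $p \bmod 4$ and $p \bmod 3$ to show each surviving correction is constant along the progression $k_n$. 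The paper's route is slicker: it observes that for $n \geq 0$, $d_p(n)$ literally equals the right-hand side of \eqref{eqn:dim_Gamma0(N)} with $N$ replaced by $Np$, so one applies a single formula (Kronecker symbols never enter) and just checks invariance under $k \mapsto k + (p-1)$ for $p > 3$. Your approach costs a little more bookkeeping with quadratic residues, but it avoids having to know that $\dim S_k(\Gamma_0(Np))^{p\text{-}\mathrm{new}} + 2\dim S_k(\Gamma_0(N)) = \dim S_k(\Gamma_0(Np))$; both arrive at the same place.

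One discrepancy you should flag explicitly: for $p > 3$ you correctly obtain
$$Q_{d_p} = \delta \cdot \frac{(p+1)\mu_0(N)}{12} = \frac{(p-1)(p+1)\mu_0(N)}{12},$$
but the proposition as printed states $Q_{d_p} = \frac{p(p-1)(p+1)}{12}\mu_0(N)$. The statement in the proposition has a stray factor of $p$; your value is the correct one, and it matches both the $\Gamma_0(N)$-level example box in Section \ref{sec:progressions} (which lists $Q_{\ddp} = \frac{(p-1)(p+1)\mu_0(N)}{12}$) and a numerical check (for $p=5$, $N=1$, $k_0=4$ one has $\dim S_4(\Gamma_0(5)) = 1$, $\dim S_8(\Gamma_0(5)) = 3$, $\dim S_{12}(\Gamma_0(5)) = 5$, so $Q_{d_p} = 2 = \frac{4\cdot 6}{12}$, not $10$). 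Since your last paragraph asserts you obtain ``the stated values,'' you should note that for $p > 3$ you are silently correcting a typo rather than reproducing the statement verbatim.
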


\begin{proof}
From \eqref{eqn:dim_Gamma0(N)} and \eqref{eqn:dim_new} we see $D_{k+12} = D_k + \mu_0(N)$ and $D_{k+12}^{\new} = D_k^{\new} + (p-1)\mu_0(N)$. This implies $d$ and $d^{\new}$ are quasi-linear and explains the values of $\Pd$, $\Qd$, $\Pdn$, and $\Qdn$ (for any $p$). Since $\d$ and $\dn$ are quasi-linear, it is clear that $\d+\dn$ is quasi-linear as well. Moreover, we can take $\Pddn =\Pd =\Pdn$ and $\Qddn =\Qd + \Qdn = p \Qd$. For $p=2,3$ one can do better and the claimed values in those cases follow from direct examination of \eqref{eqn:dim_Gamma0(N)} and \eqref{eqn:dim_new}.

Finally we consider $d_p = 2d + d^{\new}$. The quasi-linearity of $d_p$ is clear, but we can optimize the periods and defects beyond just realizing $d_p$ through its definition as a sum. For this, note that for $n\geq 0$ (or $n\geq 1$ if $k_0=2$), the value of $d_p(n)$ is the same as the right-hand side of \eqref{eqn:dim_Gamma0(N)} at $k=k_n$ and replacing $N$ by $Np$ (because that is how dimensions of spaces of cuspforms work). Since $d_p(n)$ is quasi-linear, this holds for all $n$ actually.

To derive the periods and defects of $d_p$ is now straightforward. In fact, after replacing $N$ by $Np$ one checks, case-by-case, that the right-hand side of \eqref{eqn:dim_Gamma0(N)} is invariant under $k \mapsto k + p-1$ if $p > 3$ and $k \mapsto k + 2p$ if $p=2,3$; the values of $Q_{d_p}$ are easy to determine as well. For instance, if $p \congruent 5 \bmod 12$, then $\mu_{0,3}(Np) = 0$ whereas $\bfloor{\frac{k}{4}} - \frac{k-1}{4}$ depends only on $k \bmod p-1$.
\end{proof}

\subsubsection{The condition \eqref{ND}}

\begin{proposition}
\label{prop:fullND}
If $pN>3$, then $\d$ and $\ddp$ are non-decreasing.
\end{proposition}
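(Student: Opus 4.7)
My plan is to leverage the multiplicative structure of the graded ring of modular forms to prove monotonicity in the range of weights where $\d(n)$ and $\ddp(n)$ coincide with genuine dimensions of spaces of cusp forms, and then propagate the result to all $n$ using the quasi-linearity already established in Proposition \ref{prop:fullQL}.

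First I would observe that for $k_n \geq 4$ the defining formulas match true dimensions: $\d(n) = \dim S_{k_n}(\Gamma_0(N))$ and, via the standard oldform/newform decomposition at level $\Gamma_0(Np)$, $\ddp(n) = 2\d(n) + \dn(n) = \dim S_{k_n}(\Gamma_0(Np))$. So in this range it suffices to compare actual dimensions.

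The main step is to exhibit a nonzero modular form $g \in M_\delta(\Gamma_0(N))$. Given such a $g$, multiplication by $g$ gives an injection $S_k(\Gamma_0(N)) \hookrightarrow S_{k+\delta}(\Gamma_0(N))$ for every $k \geq 2$, and viewing $g$ as a form on $\Gamma_0(Np) \subset \Gamma_0(N)$ likewise yields $S_k(\Gamma_0(Np)) \hookrightarrow S_{k+\delta}(\Gamma_0(Np))$ (multiplication by any holomorphic modular form preserves vanishing at all cusps). Combined with the previous paragraph, this gives $\d(n) \leq \d(n+1)$ and $\ddp(n) \leq \ddp(n+1)$ for every $n$ with $k_n \geq 4$. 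For the existence of $g$: when $p \geq 5$ we have $\delta \geq 4$, and the level-one Eisenstein series $E_\delta$ already lies in $M_\delta(\Gamma_0(N))$; when $p \in \{2,3\}$ we have $\delta = 2$, and the assumption $pN > 3$ forces $N \geq 2$, so the standard holomorphic weight-$2$ Eisenstein series $g = E_2(\tau) - N E_2(N\tau)$ furnishes a nonzero element of $M_2(\Gamma_0(N))$.

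Finally, I would extend to every integer $n$ via Proposition \ref{prop:fullQL}: quasi-linearity with positive period and defect implies that the difference sequences $n \mapsto \d(n+1) - \d(n)$ and $n \mapsto \ddp(n+1) - \ddp(n)$ are periodic in $n$, so non-negativity on one complete period (automatic once $k_n \geq 4$) forces non-negativity for all $n$. The only delicate point is handling the small primes $p \in \{2,3\}$, where the level-one Eisenstein series $E_{p-1}$ is unavailable and one must use the hypothesis $pN > 3$ to produce a weight-$2$ form at level $\Gamma_0(N)$.
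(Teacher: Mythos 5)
Your proof is correct and follows essentially the same route as the paper: multiplication by a nonzero weight-$\delta$ form ($E_{p-1}$ for $p>3$, a weight-$2$ Eisenstein series of level $N$ for $p\in\{2,3\}$ using $pN>3$ to force $N\geq 2$) gives the injection for $k_n\geq 4$, and quasi-linearity from Proposition~\ref{prop:fullQL} propagates non-decrease to all $n$. The only cosmetic difference is that you spell out an explicit weight-$2$ Eisenstein series $E_2(\tau)-NE_2(N\tau)$ where the paper simply asserts one exists.
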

\begin{proof}
If $p>3$, let $f$ denote the weight $p-1$ Eisenstein series of level 1.  If $p=2$ or $3$, then $N > 1$ by assumption, so let $f$ denote a weight 2 Eisenstein series of level $N$. In either case, multiplication by $f$ yields an injection
$$
S_k(\Gamma') \hookrightarrow S_{k+\delta}(\Gamma')
$$
for $\Gamma'$ either $\Gamma$ or $\Gamma_0$.
Thus, $\d(n+1) \geq \d(n)$ and $\ddp(n+1) \geq \ddp(n)$ for $k_n \geq 4$.  Since $\d$ and $\ddp$ are quasi-linear (Proposition \ref{prop:fullQL}), the inequalities hold for all $n$.
\end{proof}

The Eisenstein trick in Proposition \ref{prop:fullND} does not apply to check that $\d + \dn$ is non-decreasing because multiplication by an Eisenstein series has no reason to preserve spaces of newforms. But, we have a different argument.

\begin{proposition}\label{prop:fullND-2}
The function $\d+\dn$ is non-decreasing.
\end{proposition}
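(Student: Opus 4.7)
The plan is to reinterpret $\d + \dn$ in terms of a quotient of spaces of cusp forms and then adapt the Eisenstein-multiplication trick from the proof of Proposition \ref{prop:fullND}. Since $\ddp = 2\d + \dn$ by definition, for every $n$ with $k_n \geq 4$ we have $\d(n) + \dn(n) = \dim\left(S_{k_n}(\Gamma_0(Np))/S_{k_n}(\Gamma_0(N))\right)$, where $S_{k_n}(\Gamma_0(N))$ sits inside $S_{k_n}(\Gamma_0(Np))$ through the natural inclusion coming from $\Gamma_0(Np) \subset \Gamma_0(N)$.

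Fix a non-zero $E \in M_\delta(\Gamma_0(N))$ as in the proof of Proposition \ref{prop:fullND}. Multiplication by $E$ fits into a commutative square
\begin{equation*}
\begin{array}{ccc}
S_{k_n}(\Gamma_0(N)) & \hookrightarrow & S_{k_n}(\Gamma_0(Np)) \\
\downarrow & & \downarrow \\
S_{k_{n+1}}(\Gamma_0(N)) & \hookrightarrow & S_{k_{n+1}}(\Gamma_0(Np))
\end{array}
\end{equation*}
in which the vertical maps (all given by multiplication by $E$) are injective. The square induces a linear map on the vertical cokernels
\begin{equation*}
S_{k_n}(\Gamma_0(Np))/S_{k_n}(\Gamma_0(N)) \longrightarrow S_{k_{n+1}}(\Gamma_0(Np))/S_{k_{n+1}}(\Gamma_0(N)).
\end{equation*}

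The crux is to verify that this induced map is itself injective. Suppose $f \in S_{k_n}(\Gamma_0(Np))$ satisfies $Ef \in S_{k_{n+1}}(\Gamma_0(N))$. For any $\gamma \in \Gamma_0(N)$, the weight-$k_{n+1}$ slash operator fixes $Ef$ and factors as $E$ times the weight-$k_n$ slash applied to $f$ (using that $E$ itself is modular for $\Gamma_0(N)$). Consequently $E \cdot (f|_\gamma - f) = 0$ as holomorphic functions on the upper half-plane $\mathcal H$. Since $E$ is non-zero and $\mathcal O(\mathcal H)$ is an integral domain, $f|_\gamma = f$; holomorphy of $f$ at the cusps of $X_0(N)$ is inherited from holomorphy at the cusps of $X_0(Np)$, so $f \in S_{k_n}(\Gamma_0(N))$. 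Injectivity of the cokernel map then gives $\d(n) + \dn(n) \leq \d(n+1) + \dn(n+1)$ for every $n$ with $k_n \geq 4$.

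To extend to all $n$ I would invoke the quasi-linearity from Proposition \ref{prop:fullQL}. Since $\Qddn > 0$ and $\d + \dn$ has period $\Pddn$, for any $n$ I may choose $M$ large enough that $k_{n + M \Pddn} \geq 4$, and then
\begin{equation*}
(\d + \dn)(n+1) - (\d + \dn)(n) = (\d + \dn)(n+1+M\Pddn) - (\d + \dn)(n+M\Pddn) \geq 0
\end{equation*}
by the case already handled. The only genuine subtlety lies in the cokernel injectivity, which reduces to the integral-domain argument above; everything else is routine bookkeeping.
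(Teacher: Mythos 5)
Your proposal is correct and takes a genuinely different route from the paper. The paper observes that $\d+\dn = \ddp - \d$, reduces to showing $\ddp(n+1)-\ddp(n) \geq \d(n+1)-\d(n)$, and then proves this by numeric comparison of the defects $\Qdp$ and $\Qd$ when $p>3$ and by direct manipulation of the combinatorial dimension formulas \eqref{eqn:dim_Gamma0(N)}--\eqref{eqn:dim_new} when $p=2,3$. In fact the paper explicitly remarks, just before Proposition \ref{prop:fullND-2}, that ``the Eisenstein trick \ldots\ does not apply \ldots\ because multiplication by an Eisenstein series has no reason to preserve spaces of newforms''; your insight is that one need not preserve newforms, only the cokernel of the inclusion $S_{k}(\Gamma_0(N)) \hookrightarrow S_{k}(\Gamma_0(Np))$, whose dimension equals $\d(n)+\dn(n)$. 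Multiplication by the fixed form $E\in M_\delta(\Gamma_0(N))$ sits in the commutative square you wrote, and the integral-domain argument gives injectivity of the induced map on cokernels: if $Ef$ is $\Gamma_0(N)$-modular with $f$ modular of level $\Gamma_0(Np)$, then cancelling $E$ shows $f|_{k_n}\gamma = f$ for all $\gamma\in\Gamma_0(N)$, and the cuspidality condition descends. This is cleaner and more conceptual than the paper's argument: it avoids the case split on $p$, requires no arithmetic with the defects of quasi-linearity, and makes manifest \emph{why} the inequality must hold. The step extending the conclusion from $k_n\geq 4$ to all $n$ by shifting through a multiple of $\Pddn$ is routine and correct, given the quasi-linearity established in Proposition \ref{prop:fullQL}.
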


\begin{proof}
Since $\d+\dn = \ddp-\d$, it suffices to see that
\begin{equation}
\label{eqn:check}
\ddp(n+1)-\ddp(n) \geq \d(n+1) - \d(n).
\end{equation}
First suppose that $p > 3$ and let $Q_d$ and $Q_{d_p}$ be as in Proposition \ref{prop:fullQL}. Then, the left-hand side of \eqref{eqn:check} is $\ddp(n+1)-\ddp(n) = Q_{d_p}$ whereas it follows from Proposition \ref{prop:fullND} that $Q_d \geq \d(n+1)-\d(n)$. Thus we want to show
\begin{equation*}
{(p-1)(p+1)\mu_0(N)\over 12} = Q_{d_p} \overset{?}{\geq} Q_d = {(p-1)\mu_0(N)\over \gcd(12,p-1)}.
\end{equation*}
This is clearly true if $p>3$, so we are finished in this case.

For $p=2,3$, one argues even more explicitly. Specifically, by \eqref{eqn:dim_Gamma0(N)} and \eqref{eqn:dim_new}, we have
\begin{small}
\begin{align*}
(D_{k+2} + D^{\new}_{k+2}) - (D_{k} + D^{\new}_{k})
&=
\begin{cases}
\frac{1}{3} \mu_0(N) - \left(\floor{\frac{k+2}{3}} - \floor{\frac{k}{3}} - \frac{2}{3} \right) \mu_{0,3}(N) & \text{if~}p=2; \\
\frac{1}{2} \mu_0(N) - \left(\floor{\frac{k+2}{4}} - \floor{\frac{k}{4}} - \frac{1}{2} \right) \mu_{0,2}(N) & \text{if~}p=3, 
\end{cases} \\
&\geq
\begin{cases}
\frac{1}{3} \mu_0(N) - \frac{1}{3} \mu_{0,3}(N) & \text{if~}p=2;\\
\frac{1}{2} \mu_0(N) - \frac{1}{2} \mu_{0,2}(N) & \text{if~}p=3.
\end{cases}
\end{align*}
\end{small}
Then, one ends by noting that $\mu_0(N) \geq \mu_{0,2}(N), \mu_{0,3}(N)$.
\end{proof}

\section{Example:\ $\rhobar$-components}\label{sec:rhobar-space}
Write $G_{\Q}$ for the absolute Galois group $\Gal(\bar \Q/\Q)$. Fix a decomposition group $D$ at $p$, and let $I \subset D$ be its inertia subgroup. We will use $\rhobar: G_{\Q} \rightarrow \GL_2(\Fpbar)$ to denote a continuous, odd, and semi-simple representation.  We also write $\omega$ for the mod $p$ cyclotomic character. Throughout Section \ref{sec:rhobar-space} we also make the following assumption:
\begin{equation}
\label{noE2}
\tag{No E2}
\rhobar \not\simeq (1 \oplus \omega)\otimes \omega^j \;\;\;\;\;\; (\text{for any $j$}).
\end{equation}
That is, $\rhobar$ is not a cyclotomic twist of the Galois representation associated with the `$E_2$-eigensystem.'

Unlike the previous section, we let $\Gamma = \Gamma_1(N)$ and $\Gamma_0 = \Gamma_1(N)\cap \Gamma_0(p)$ for $p \nmid N$. The spaces $S_k(\Gamma)$ and $S_k(\Gamma_0)^{p-\new}$ have $\Z$-linear bases and we write $S_k(\Gamma,\Zp)$ and $S_k(\Gamma_0,\Zp)^{p-\new}$ for the scalar extension of the corresponding $\Z$-modules to $\Zp$. These are finite free $\Zp$-modules. Write $\mathbf T$ for the commutative $\mathbf Z_p$-algebra generated by formal symbols $T_\ell$ as $\ell$ runs over primes $\ell \ndvd Np$. The algebra $\mathbf T$ acts by Hecke operators on many spaces, such as $S_k(\Gamma,\Zp)$. Further, each $\rhobar$ defines a canonical maximal ideal $\mathfrak m_{\rhobar} \subset \mathbf T$. Thus we may define $S_k(\Gamma,\Zp)_{\rhobar} = S_k(\Gamma,\Zp)_{\mathfrak m_{\rhobar}}$ and $S_k(\Gamma)_{\rhobar} = S_k(\Gamma,\Zp)_{\rhobar}[1/p]$ (and similarly for new spaces).

Now assume that $\rhobar$ is modular of level $N$, choose $k(\rhobar)$ to be the  least integer $k$ where $S_k(\Gamma)_{\rhobar}$ is non-zero. Set $k_0\geq 2$ to be the least integer $k_0 \congruent k(\rhobar) \bmod p-1$ and finally set $k_n = k_0 + n(p-1)$. We define $d(n) = \dim S_{k_n}(\Gamma)_{\rhobar}$ and $d^{\new}(n) = \dim S_{k_n}(\Gamma_0)^{p-\new}_{\rhobar}$ for $n\geq 0$. 

We will show below that these functions are quasi-linear on their domains and thus we can extend $d$ and $\dn$ canonically to all $n$. After doing that, we will check the axiom \eqref{ND} is satisfied. In all cases we will make the periods of quasi-linearity explicit; when $\rhobar$ is irreducible but reducible upon restriction to $D$, we will do the same for the defects of quasi-linearity (see Section \ref{subsec:buzzard-regular}).

In order to carry out the analysis, we need to use modular symbols. This is the topic of the next subsection.

\subsection{Recollection of modular symbols}\label{subsec:mod-symbs}

Throughout this subsection we use $g$ to denote a non-negative integer. Let $R = \Zp$, $\Fp$, or $\Qp$ and consider $\Sym^g(R^2)$ as homogenous polynomials of degree $g$ in variables $X$ and $Y$, equipped with a right action of $\gamma \in \GL_2(R)$ 
\begin{equation*}
P|_\gamma(X,Y) = P(dX - cY, aY - bX).
\end{equation*}
Any subgroup of $\SL_2(\Z)$ naturally maps to $\GL_2(R)$, so $\Sym^g(R^2)$ is endowed with the structure of right $\Gamma'$-module with $\Gamma' = \Gamma$ or $\Gamma_0$.  Thus we can consider the finite $R$-modules given by the cohomology $H^i_c(\Gamma', \Sym^g(R^2))$ ($i=1,2$). We note two things:\ $H^1_c(\Gamma', \Sym^g(R^2))$ is torsion-free and either cohomology is equipped with an $R$-linear action of the $\Zp$-algebra $\mathbf T$, so one can localize at the ideal $\mathfrak m_{\rhobar}$ as above. 

We recall that $\Sym^g(\Fp^2)$ is completely reducible under the action of $\Gamma_0$. Namely, write $B \subset \GL_2(\Fp)$ for the Borel subgroup of upper-triangular matrices. Then, if we set $F_{-1} = (0)$ and for $0 \leq j \leq g$ we define $F_j = F_{j-1} + \Fp X^{g-j} Y^{j}$ then it is straightforward to check that $(0) \sci F_0 \sci F_1 \sci \dotsb \sci \Sym^g(\Fp^2)$ is a $B$-stable filtration by $\Fp$-vector spaces whose consecutive quotients are one-dimensional. The action of $B$ on the $j$-th quotient is 
\begin{equation*}
F_j / F_{j-1} \simeq \Fp(a^jd^{g-j})
\end{equation*}
where $\Fp(a^s d^r)$ means the $B$-representation  sending  $\begin{smallpmatrix} a & b \\ 0 & d \end{smallpmatrix} \in B$ to $a^s d^{r}$. We note that this implies that the subquotients $H^i_c(\Gamma_0,F_j/F_{j-1})$ of the cohomology $H^i_c(\Gamma_0,\Sym^g(\Fp^2))$ are also Hecke-stable.

\begin{lemma}\label{lemma:Gamma0eis}
$H^2_c(\Gamma_0,\Fp(a^sd^r)) = (0)$ unless $s \congruent r \bmod p-1$. If $s \congruent r \bmod p-1$ then
\begin{equation*}
H^2_c(\Gamma_0,\Fp(a^sd^r))  = H^2_c(\Gamma_0,\Fp((ad)^r))  \simeq \F_p,
\end{equation*}
with the action of $T_\ell$ through $\ell^{r} + \ell^{1+r}$.
\end{lemma}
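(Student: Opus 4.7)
My strategy is to reduce the computation to one of coinvariants, then verify the two assertions separately.

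First I would apply Poincar\'e duality on the (non-compact) modular curve $Y_0 = \Gamma_0 \backslash \cal{H}$. Under the standing assumption $p \geq 5$, the group $\Gamma_0 = \Gamma_1(N) \cap \Gamma_0(p)$ is torsion-free, so $Y_0$ is an orientable $2$-manifold and duality gives a Hecke-equivariant isomorphism $H^2_c(\Gamma_0, M) \cong M_{\Gamma_0}$ (coinvariants) for every $\Gamma_0$-module $M$. Since $\Fp(a^s d^r)$ is $1$-dimensional over $\Fp$, this reduces the whole lemma to examining the resulting character of $\Gamma_0$ on the line and how the Hecke semigroup extends it.

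Next I would pin down the character. For $\gamma = \begin{smallpmatrix} a & b \\ c & d \end{smallpmatrix} \in \Gamma_0$ one has $c \equiv 0 \pmod p$ and so $\bar \gamma$ is upper triangular; moreover $\bar a \bar d = 1$ in $\Fp$ because $\det \gamma = 1$. Thus $\gamma$ acts on $\Fp(a^s d^r)$ by $\bar a^s \bar d^r = \bar a^{s-r}$. The map $\Gamma_0 \to \Fp^\times$, $\gamma \mapsto \bar a$, is surjective: given $x \in \Fp^\times$, choose $a, d$ with $a \equiv d^{-1} \equiv x \pmod p$ and $a \equiv d \equiv 1 \pmod N$ via CRT, and then $\begin{smallpmatrix} a & (ad-1)/p \\ p & d \end{smallpmatrix} \in \Gamma_0$ has $\bar a = x$. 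Hence the character $\bar a^{s-r}$ is trivial iff $s \equiv r \pmod{p-1}$; when it is, $M_{\Gamma_0} \cong \Fp$, and otherwise $M_{\Gamma_0} = 0$ (because then $\bar a^{s-r} - 1 \in \Fp^\times$ for some $\gamma$, killing every class).

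Finally, to compute the action of $T_\ell$ on $\Fp((ad)^r)_{\Gamma_0}$, I would observe that $(ad)^r$ is the $r$-th power of the determinant character, trivial on $\Gamma_0 \subset \SL_2(\mathbf Z)$ but extending to the Hecke semigroup by $\det^r$. Using the standard coset decomposition (valid for $\ell \nmid Np$), $\Gamma_0 \begin{smallpmatrix}1 & 0 \\ 0 & \ell\end{smallpmatrix} \Gamma_0 = \bigsqcup_{i=0}^{\ell-1} \Gamma_0 \begin{smallpmatrix}1 & i \\ 0 & \ell\end{smallpmatrix} \sqcup \Gamma_0 \begin{smallpmatrix}\ell & 0 \\ 0 & 1\end{smallpmatrix}$, each of the $\ell+1$ representatives $\alpha_i$ has determinant $\ell$, so $T_\ell$ acts on $\Fp$ by $\sum_i (\det \alpha_i)^r = (\ell+1)\ell^r = \ell^r + \ell^{1+r}$. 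The step I expect to consume the most care is confirming that the Hecke action passes through the Poincar\'e duality isomorphism compatibly with the Ash--Stevens formalism of Section \ref{subsec:mod-symbs}, in particular that no unexpected index $[\Gamma_0 : \Gamma_0 \cap \alpha \Gamma_0 \alpha^{-1}]$ factor appears; I would sanity-check by specializing to $r=0$, where the formula recovers the classical Eisenstein eigenvalue $1 + \ell$.
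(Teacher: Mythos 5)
Your proof follows the same route as the paper's: Poincar\'e duality reduces $H^2_c(\Gamma_0,-)$ to coinvariants, the one-dimensional module has trivial $\Gamma_0$-action exactly when $s\equiv r\pmod{p-1}$, and the $T_\ell$-eigenvalue is computed from the usual $\ell+1$ coset representatives, each acting by $(\det)^r=\ell^r$. Your explicit argument for the surjectivity of $\gamma\mapsto \bar a$ spells out a step the paper leaves to the reader, and the determinant-character interpretation of the Hecke computation is a clean equivalent of the paper's term-by-term sum.

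One claim is false as stated, though: it is \emph{not} true that $\Gamma_0=\Gamma_1(N)\cap\Gamma_0(p)$ is torsion-free whenever $p\geq 5$. For $N\leq 3$ the group $\Gamma_1(N)$ already contains nontrivial torsion (e.g.\ $N=1$ gives $\Gamma_0=\Gamma_0(p)$, which has elliptic points of order $2$ and $3$ for many primes, such as $p=5$). What saves the argument is that for $p\geq 5$ the orders of torsion elements of $\Gamma_0$ (which divide $4$ or $6$) are all prime to $p$, so with $\Fp$-coefficients the orbifold version of Poincar\'e duality still gives $H^2_c(\Gamma_0,M)\cong M_{\Gamma_0}$ Hecke-equivariantly; equivalently one passes to a torsion-free finite-index normal subgroup and uses that restriction/corestriction are isomorphisms on $\Fp$-cohomology. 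The paper simply invokes Poincar\'e duality without comment, so this is a place where your attempt to justify the step introduced an incorrect intermediate assertion; replace the torsion-freeness claim with the coprimality observation and the proof is sound.
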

\begin{proof}
Let $M = \F_p(a^sd^r)$. By Poincar\'e duality, $H^2_c(\Gamma_0,M) \simeq H_0(\Gamma_0,M)$ is the largest quotient of $\F_p(a^sd^r)$ on which $\Gamma_0$ acts trivially. This proves the vanishing claim and it proves that $H_0(\Gamma_0,\F_p((ad)^r)) = \F_p((ad)^r)$ as a Hecke module. If $\mathbf e$ is a basis of $\F_p((ad)^r)$ then we have
\begin{equation*}
T_\ell(\mathbf e) =  \mathbf e|_{\begin{smallpmatrix} \ell \\ & 1 \end{smallpmatrix}} + \sum_{b=0}^{\ell-1} \mathbf e|_{\begin{smallpmatrix}1 & b \\ & \ell\end{smallpmatrix}} = (\ell^r + \ell^{1+r})\mathbf e.
\end{equation*}
This completes the proof.
\end{proof}
Now recall we assume $\rhobar$ satisfies  \eqref{noE2}.

\begin{lemma}\label{lemma:eis}
If $\Gamma' = \Gamma$ or $\Gamma_0$ and $M = \Sym^g(\Z_p^2)$ or $\Sym^g(\F_p^2)$, then $H^2_c(\Gamma',M)_{\rhobar} = (0)$.
\end{lemma}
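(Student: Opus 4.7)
The plan is to first reduce to $M = \Sym^g(\F_p^2)$ and then handle the two levels in turn, trading the harder case $\Gamma' = \Gamma$ for the easier case $\Gamma' = \Gamma_0$ by a restriction--corestriction argument. For the reduction, consider the short exact sequence $0 \to \Sym^g(\Z_p^2) \xrightarrow{p} \Sym^g(\Z_p^2) \to \Sym^g(\F_p^2) \to 0$ and identify $H^2_c(\Gamma',-) \cong (-)_{\Gamma'}$ via Poincar\'e duality. Right-exactness of coinvariants then yields a natural isomorphism $H^2_c(\Gamma',\Sym^g(\Z_p^2))\otimes_{\Z_p}\F_p \cong H^2_c(\Gamma',\Sym^g(\F_p^2))$. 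As the left-hand side is finitely generated over $\Z_p$, Nakayama's lemma applied after localizing at $\mathfrak m_\rhobar$ reduces us to the case $M = \Sym^g(\F_p^2)$.

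For $\Gamma' = \Gamma_0$, I would exploit that $\Gamma_0 \subset \Gamma_0(p)$ reduces modulo $p$ into the Borel $B$, so that the filtration $(0) \subsetneq F_0 \subsetneq \dotsb \subsetneq F_g = \Sym^g(\F_p^2)$ described just before Lemma \ref{lemma:Gamma0eis} is $\Gamma_0$-stable. An induction on $j$ using the long exact cohomology sequence associated with $0 \to F_{j-1} \to F_j \to F_j/F_{j-1} \to 0$ reduces the desired vanishing to showing that $H^2_c(\Gamma_0,F_j/F_{j-1})_\rhobar = 0$ for every $0\le j\le g$. By Lemma \ref{lemma:Gamma0eis} that cohomology is either zero or---when $j \equiv g-j \pmod{p-1}$---a one-dimensional $\F_p$-space on which $T_\ell$ acts by $\ell^{g-j}+\ell^{g-j+1}$ for every $\ell\nmid Np$. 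The semisimple Galois representation attached to this eigensystem is $\omega^{g-j}\oplus\omega^{g-j+1} = (1\oplus\omega)\otimes\omega^{g-j}$, which \eqref{noE2} excludes. Hence $\mathfrak m_\rhobar$ does not contain the annihilator of this Hecke module, and its $\rhobar$-localization vanishes.

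For $\Gamma' = \Gamma$, I would transfer the vanishing along the restriction map. Since $p\nmid N$, the index $[\Gamma:\Gamma_0] = p+1$ is a unit in $\F_p$, so the identity $\mathrm{cor}\circ \mathrm{res} = (p+1)\cdot \mathrm{id}$ forces the restriction map $\mathrm{res}\colon H^2_c(\Gamma,\Sym^g(\F_p^2)) \to H^2_c(\Gamma_0,\Sym^g(\F_p^2))$ to be injective. Because $\mathrm{res}$ is equivariant for the prime-to-$Np$ Hecke algebra $\mathbf T$, localizing at $\mathfrak m_\rhobar$ preserves injectivity, and therefore $H^2_c(\Gamma,\Sym^g(\F_p^2))_\rhobar \hookrightarrow H^2_c(\Gamma_0,\Sym^g(\F_p^2))_\rhobar = 0$. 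The main technical point is this Hecke-equivariance of $\mathrm{res}$, which I would verify by choosing compatible coset decompositions of the double coset $\Gamma\alpha_\ell \Gamma$ inside both $\Gamma$ and $\Gamma_0$ for each $\ell\nmid Np$, relying on the fact that at such primes the level structure at $p$ is inert under the Hecke correspondence.
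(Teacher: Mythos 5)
Your proof is correct, and the core of it matches the paper: both handle the $(\Gamma_0, \F_p)$ case by running the Borel filtration $(0)\subsetneq F_0 \subsetneq \dotsb \subsetneq \Sym^g(\F_p^2)$ through the long exact sequence, invoking Lemma \ref{lemma:Gamma0eis} to identify the only surviving graded pieces with the Eisenstein eigensystem $\omega^{g-j}\oplus\omega^{g-j+1}$, and using \eqref{noE2} to conclude the $\rhobar$-localization dies; and both reduce $\Z_p$ to $\F_p$ coefficients via right-exactness of $H^2_c \cong H_0$ (equivalently Nakayama). Where you genuinely diverge is the reduction from $\Gamma$ to $\Gamma_0$. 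You work on the $H^2_c$ side and argue that restriction is injective because $\mathrm{cor}\circ\mathrm{res}=[\Gamma:\Gamma_0]=p+1$ is a unit in $\F_p$, then transport the vanishing backwards along the injection; this requires the numerical observation $p+1\not\equiv 0\pmod p$ plus the Hecke-equivariance check of $\mathrm{res}$ that you flag as the technical point. The paper instead stays entirely on the $H_0$ side after one application of Poincar\'e duality and uses the natural surjection $H_0(\Gamma_0,M)\twoheadrightarrow H_0(\Gamma,M)$ coming simply from $\Gamma_0\subset\Gamma$ (coinvariants for the smaller group surject onto coinvariants for the larger), so that vanishing of the source forces vanishing of the target after $\rhobar$-localization. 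The paper's route is slightly cleaner: it is unconditional in the index, and Hecke-equivariance of the coinvariants quotient map is immediate since the prime-to-$Np$ Hecke correspondences are defined by the same double cosets at both levels. Your version is valid, but you should be aware that the index being coprime to $p$ is the extra arithmetic input your argument requires and the paper's does not.
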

\begin{proof}
As in the previous lemma, it suffices to show that $H_0(\Gamma',M)_{\rhobar} = (0)$. Since there is a natural quotient map $H_0(\Gamma_0,M) \twoheadrightarrow H_0(\Gamma,M)$, $H_0$ is right exact and the $H_0$'s are both finite over $\Z_p$, it suffices to let $\Gamma' = \Gamma_0$ and $M = \Sym^g(\Fp^2)$.

In that specific case,  Lemma \ref{lemma:Gamma0eis} and the assumption \eqref{noE2} imply together that $H_0(\Gamma_0,F_j/F_{j-1})_{\rhobar} = (0)$ for all $j$. By the right-exactness of $H_0(\Gamma_0,-)$ and descending induction on $0\leq j \leq g$, we deduce $H_0(\Gamma_0,M/F_{j-1})_{\rhobar} = (0)$ as well. Taking $j = 0$ proves our claim.
\end{proof}

\begin{proposition}\label{prop:Zp-vs-Fp}
If $\Gamma' = \Gamma$ or $\Gamma_0$, then
\begin{equation*}
\rank H^1_c(\Gamma', \Sym^g(\Zp^2))_{\rhobar} = \dim H^1_c(\Gamma', \Sym^g(\Fp^2))_{\rhobar}.
\end{equation*}
\end{proposition}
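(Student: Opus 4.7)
The plan is to use the short exact sequence of coefficient modules
$$0 \to \Sym^g(\Zp^2) \xrightarrow{\,p\,} \Sym^g(\Zp^2) \to \Sym^g(\Fp^2) \to 0$$
together with the associated long exact sequence for compactly supported cohomology. After localizing at the maximal ideal $\mathfrak m_{\rhobar}$ (an exact operation), the relevant portion reads
$$H^1_c(\Gamma', \Sym^g(\Zp^2))_{\rhobar} \xrightarrow{\,p\,} H^1_c(\Gamma', \Sym^g(\Zp^2))_{\rhobar} \to H^1_c(\Gamma', \Sym^g(\Fp^2))_{\rhobar} \to H^2_c(\Gamma', \Sym^g(\Zp^2))_{\rhobar}.$$

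The key step is then to invoke Lemma \ref{lemma:eis}, which applies to $M = \Sym^g(\Zp^2)$ and tells us that the last term in the displayed sequence vanishes. So the cokernel of multiplication-by-$p$ on $H^1_c(\Gamma', \Sym^g(\Zp^2))_{\rhobar}$ is isomorphic to $H^1_c(\Gamma', \Sym^g(\Fp^2))_{\rhobar}$.

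To conclude, I would use the observation recorded in Section \ref{subsec:mod-symbs} that $H^1_c(\Gamma', \Sym^g(\Zp^2))$ is torsion-free, hence a finite free $\Zp$-module, and therefore so is its localization $H^1_c(\Gamma', \Sym^g(\Zp^2))_{\rhobar}$. For such a free module $L$, $\dim_{\Fp}(L/pL) = \rank_{\Zp}(L)$, so the isomorphism above yields exactly the claimed equality.

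There is essentially no obstacle here beyond what is already packaged into Lemma \ref{lemma:eis}; the only minor subtlety is remembering that localization at $\mathfrak m_{\rhobar}$ commutes with taking cohomology (since $\mathbf T$ acts $\Zp$-linearly on each cohomology group and everything in sight is finitely generated over $\Zp$) and preserves short exact sequences. The torsion-freeness assertion is in turn a standard consequence of the fact that $\Gamma'$ has virtual cohomological dimension $1$ for compactly supported cohomology of this type (or, alternatively, is contained in the paper's earlier remarks).
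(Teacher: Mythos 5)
Your proposal is correct and follows essentially the same route as the paper: the paper's ``canonical exact sequence'' $0 \to H^1_c \otimes_{\Zp} \Fp \to H^1_c(\Sym^g(\Fp^2)) \to H^2_c(\Sym^g(\Zp^2))$ is precisely what you obtain from the multiplication-by-$p$ short exact sequence of coefficients, and both arguments then finish by localizing, applying Lemma~\ref{lemma:eis} to kill the $H^2_c$ term, and using torsion-freeness of $H^1_c(\Gamma',\Sym^g(\Zp^2))$.
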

\begin{proof}
In general there is a canonical exact sequence
\begin{equation*}
0 \rightarrow H^1_c(\Gamma',\Sym^g(\Zp^2)) \otimes_{\Zp} \Fp \rightarrow H^1_c(\Gamma', \Sym^g(\Fp^2)) \rightarrow H^2_c(\Gamma', \Sym^g(\Zp^2)).
\end{equation*}
The proposition then follows by localizing at $\rhobar$ and applying Lemma \ref{lemma:eis}. 
\end{proof}

We now recall a result of Ash and Stevens (\cite{AshStevens-Duke}) to study the ranks in Proposition \ref{prop:Zp-vs-Fp}. For $g \geq 0$ we let $I_g = \Ind_{\Gamma_0}^{\Gamma}(\F_p(a^g))$. This only depends on $g \bmod p-1$.

\begin{theorem}[Ash--Stevens]\label{theorem:ash-stevens}
\leavevmode
\begin{enumerate}
\item If $0 < g < p$ then there is a natural short exact sequence
\begin{equation*}
0 \goto H^1_c(\Gamma,\Sym^g(\F_p^2))_{\rhobar} \goto H^1_c(\Gamma,I_g)_{\rhobar} \goto H^1_c(\Sym^{p-1-g}(\F_p^2))_{\rhobar \otimes \omega^{-g}} \goto 0.
\end{equation*}
\item If $g > p$, then there is a natural short exact sequence
\begin{equation*}
0 \goto H^1_c(\Gamma,\Sym^{g-(p+1)}(\F_p^2))_{\rhobar\otimes \omega^{-1}} \goto H^1_c(\Gamma,\Sym^g(\F_p^2))_{\rhobar} \goto H^1_c(\Gamma,I_g)_{\rhobar} \goto 0.
\end{equation*}
\end{enumerate}
\end{theorem}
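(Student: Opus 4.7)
The plan is to obtain both parts of the theorem as the cohomological shadow of two short exact sequences of $\Gamma$-modules
\begin{align*}
0 &\to \Sym^g(\F_p^2) \to I_g \to \Sym^{p-1-g}(\F_p^2)\otimes \det\nolimits^g \to 0 \qquad (0<g<p),\\
0 &\to \Sym^{g-(p+1)}(\F_p^2)\otimes \det \to \Sym^g(\F_p^2) \to I_g \to 0 \qquad (g>p),
\end{align*}
reflecting the two-step structure of the classical $\GL_2(\F_p)$-principal series $\Ind_{B(\F_p)}^{\GL_2(\F_p)}\F_p(a^g)$. For part (a), I would build the injection $\Sym^g(\F_p^2) \hookrightarrow I_g$ by Frobenius reciprocity, lifting the $\Gamma_0$-equivariant projection $\Sym^g(\F_p^2) \twoheadrightarrow F_g/F_{g-1} = \F_p(a^g)$ from the Borel filtration of Section~\ref{subsec:mod-symbs}; concretely, $\phi_g(P)$ is the function on $\Gamma_0 \backslash \Gamma \leftrightarrow \mathbf{P}^1(\F_p)$ given (up to scaling) by evaluating $P$ at the line attached to each coset, from which injectivity for $g<p$ is immediate. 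For part (b), the injection is multiplication by the $\GL_2(\F_p)$-semi-invariant $\theta = X^p Y - X Y^p$ of degree $p+1$ and weight $\det$, whose cokernel in $\Sym^g(\F_p^2)$ is identified with $I_g$ by direct inspection of the Borel filtration.

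Granting the algebraic SESs, I would pass to the long exact sequence in compactly supported group cohomology and localize everywhere at $\mathfrak m_{\rhobar}$. The $H^0_c$-terms vanish because the modular curve is non-compact. Lemma~\ref{lemma:eis} kills $H^2_c(\Gamma,\Sym^g(\F_p^2))_{\rhobar}$, while Shapiro's lemma identifies $H^2_c(\Gamma,I_g)$ with $H^2_c(\Gamma_0,\F_p(a^g))$; by Lemma~\ref{lemma:Gamma0eis} the latter is nonzero only when $g \equiv 0 \pmod{p-1}$, in which case the Hecke eigensystem on it is that of $1 \oplus \omega$, so the $\rhobar$-localization vanishes by the hypothesis~\eqref{noE2}. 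Thus the long exact sequence collapses to a short exact sequence among the $H^1_c$'s.

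The twists $\rhobar \otimes \omega^{-g}$ and $\rhobar \otimes \omega^{-1}$ appearing in the theorem absorb the $\det^g$ and $\det$ twists on the coefficient modules. These twists are trivial as $\Gamma$-modules since $\Gamma \subset \SL_2(\Z)$, but they do shift the Hecke action: twisting coefficients by $\det^j$ multiplies every $T_\ell$-eigenvalue by $\ell^j$, which via the mod-$p$ Eichler-Shimura correspondence matches a twist of $\rhobar$ by $\omega^j$ on the Galois side. Consequently, the $\mathfrak m_{\rhobar}$-localization of $H^1_c(\Gamma,\Sym^{p-1-g}(\F_p^2)\otimes \det\nolimits^g)$ agrees with the $\mathfrak m_{\rhobar \otimes \omega^{-g}}$-localization of $H^1_c(\Gamma,\Sym^{p-1-g}(\F_p^2))$, and similarly for part (b).

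The main obstacle is the verification of the algebraic SESs themselves, and in particular the precise identification of the cokernel (resp.\ kernel) with $\Sym^{p-1-g}(\F_p^2)$ (resp.\ $\Sym^{g-(p+1)}(\F_p^2)$) carrying the correct $\det$-twist. The cleanest route is to observe that $I_g$, pulled back along the reduction $\Gamma \to \SL_2(\F_p)$, is the restriction of the classical principal series, whose socle is $\Sym^g(\F_p^2)$ and whose cosocle is $\Sym^{p-1-g}(\F_p^2)\otimes \det^g$ when $0<g<p-1$; an analogous structural statement (together with the $\theta$-construction) underlies part (b). The boundary cases ($g=p-1$ and the smallest values of $g>p$) need a case-by-case check but follow the same pattern.
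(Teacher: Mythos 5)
Your proposal follows essentially the same route as the paper: the same two short exact sequences of $\GL_2(\F_p)$-modules (which the paper simply imports from Ash--Stevens, citing their Lemma~3.2 and Theorem~3.4, rather than re-deriving them via Frobenius reciprocity and $\theta$ as you sketch), followed by the long exact sequence in $H^i_c$ localized at $\mathfrak m_{\rhobar}$, with the boundary $H^2_c$-terms killed by Lemma~\ref{lemma:eis}. Your explicit handling of the $H^2_c(\Gamma,I_g)_{\rhobar}$ vanishing via Shapiro's lemma together with Lemma~\ref{lemma:Gamma0eis} and~\eqref{noE2} is a welcome clarification of a step the paper subsumes, somewhat loosely, under a single citation of Lemma~\ref{lemma:eis}.
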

\begin{proof}
This theorem follows from \cite[Theorem 3.4(a,c)]{AshStevens-Duke}, except our normalizations are slightly different (both in the action and in that we use cohomology with compact supports). But as in \cite[Lemma 3.2]{AshStevens-Duke}, we can construct canonical short exact sequences of $\GL_2(\F_p)$-modules
\begin{equation*}
0 \goto \Sym^g(\F_p^2) \goto I_g \goto \Sym^{p-1-g}(\F_p^2)\otimes \F_p({\det}^g)\goto 0 \;\;\;\;\;\;\;\;\; (0 < g < p),
\end{equation*}
and
\begin{equation*}
0 \goto \Sym^{g-(p+1)}(\F_p^2) \otimes \F_p({\det}) \overset{\theta}{\rightarrow} \Sym^g(\F_p^2) \goto I_g \goto 0 \;\;\;\;\;\;\;\;\; (g >p).
\end{equation*}
The results (a) and (b) now follow from the long exact sequence in $H^i_c$ and the vanishing of the relevant $H^2_c$'s by Lemma \ref{lemma:eis}.
\end{proof}

For $t \in \Z/(p-1)\Z$ and $g \geq 0$ we now define
\begin{equation*}
D(g,\rhobar,t) = \dim H^1_c(\Gamma,\Sym^g(\Fp^2))_{\rhobar\otimes \omega^t}.
\end{equation*}
Theorem \ref{theorem:ash-stevens}(b) has the following immediate consequence.

\begin{corollary}\label{cor:quasi-linearity-modsymb}
Assume that $g \geq p^2-1$. Then, for each $t$,
\begin{equation*}
D(g,\rhobar,t) = D(g-(p^2-1),\rhobar,t) + \sum_{i=0}^{p-2} \dim H^1_c(\Gamma,I_{g-2i})_{\rhobar \otimes \omega^{t-i}}.
\end{equation*}
\end{corollary}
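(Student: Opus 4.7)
The plan is to iterate Theorem \ref{theorem:ash-stevens}(b) exactly $p-1$ times and then simplify the result using that $I_g$ depends only on $g \bmod p-1$ and that $\omega^{p-1}$ is trivial.

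More precisely, first I would rewrite Theorem \ref{theorem:ash-stevens}(b), after replacing $\rhobar$ with $\rhobar \otimes \omega^t$ and taking dimensions of the three terms in the short exact sequence, as the identity
\begin{equation*}
D(g,\rhobar,t) = D(g-(p+1),\rhobar,t-1) + \dim H^1_c(\Gamma,I_g)_{\rhobar \otimes \omega^t},
\end{equation*}
valid whenever $g > p$. Iterating this identity $p-1$ times with $g$ replaced successively by $g,g-(p+1),\dotsc,g-(p-2)(p+1)$ — each step legal because the hypothesis $g \geq p^2-1$ guarantees $g - i(p+1) > p$ for $0 \leq i \leq p-2$ — telescopes to
\begin{equation*}
D(g,\rhobar,t) = D(g-(p-1)(p+1),\rhobar,t-(p-1)) + \sum_{i=0}^{p-2} \dim H^1_c(\Gamma,I_{g-i(p+1)})_{\rhobar \otimes \omega^{t-i}}.
\end{equation*}

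To finish, I would observe two simplifications. Since $\omega^{p-1} = 1$, the twist $\rhobar \otimes \omega^{t-(p-1)}$ equals $\rhobar \otimes \omega^t$, and $(p-1)(p+1) = p^2-1$, so the leading term becomes $D(g-(p^2-1),\rhobar,t)$. For the summands, $I_g$ depends only on $g \bmod p-1$ (as noted in the text preceding Theorem \ref{theorem:ash-stevens}), and since $p+1 \equiv 2 \bmod p-1$, we have $I_{g-i(p+1)} = I_{g-2i}$. Substituting these two identifications yields precisely the claimed formula.

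There is no real obstacle: the only thing to check carefully is that every iterate falls within the $g > p$ hypothesis of Theorem \ref{theorem:ash-stevens}(b), which is exactly what forces the lower bound $g \geq p^2-1$, and that the bookkeeping on $\omega$-twists and $I_g$-indices is done consistently mod $p-1$.
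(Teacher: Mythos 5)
Your proof is correct and follows essentially the same route as the paper: iterate Theorem \ref{theorem:ash-stevens}(b) exactly $p-1$ times (the paper phrases this as an induction on $j$ up to $p-1$) and then reduce the indices and twists modulo $p-1$. The only cosmetic difference is that the paper's intermediate identity already writes the summands as $I_{g-2i}$, whereas you keep $I_{g-i(p+1)}$ throughout and convert at the end; both rely on the same observation that $I_g$ depends only on $g \bmod p-1$.
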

\begin{proof}
It follows by induction on $j$ and Theorem \ref{theorem:ash-stevens}(b) that if $1 \leq j \leq p-1$, then
\begin{equation*}
D(g,\rhobar,t) - D(g-j(p+1),\rhobar,t-j) = \sum_{i=0}^{j-1} \dim H^1_c(\Gamma,I_{g-2i})_{\rhobar \otimes \omega^{t-i}}.
\end{equation*}
Taking $j = p-1$ gives the claim.
\end{proof}

By Corollary \ref{cor:quasi-linearity-modsymb}, $g \mapsto D(g,\rhobar,t)$ is quasi-linear with period $p^2-1$ and an explicit defect (for fixed $\rhobar$, $t$). We now aim to show the same for
\begin{equation*}
D_p(g,\rhobar,t) = \dim H^1_c(\Gamma_0,\Sym^g(\Fp^2))_{\rhobar\otimes \omega^t}.
\end{equation*}
In fact, the periods of $D_p$ and $D$ will be different, but their defects will be the same.

\begin{proposition}\label{prop:Symg-decompose}
If $g \geq 0$, then
\begin{equation*}
\dim H^1_c(\Gamma_0, \Sym^g(\Fp^2))_{\rhobar} = \sum_{j=0}^{g} \dim H^1_c(\Gamma_0, \Fp(a^{2j-g}))_{\rhobar \otimes \omega^{j-g}}.
\end{equation*}
\end{proposition}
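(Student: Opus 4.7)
The plan is to use the $B$-stable filtration $0 = F_{-1} \subset F_0 \subset F_1 \subset \dotsb \subset F_g = \Sym^g(\F_p^2)$ constructed at the start of Section \ref{subsec:mod-symbs}, which is also $\Gamma_0$-stable because $\Gamma_0 \subseteq \Gamma_0(p)$ reduces mod $p$ into $B$. After localizing at $\mathfrak m_{\rhobar}$, the intent is to compute $\dim \Hc{\Sym^g(\F_p^2)}_{\rhobar}$ as the sum of the $\dim \Hc{F_j/F_{j-1}}_{\rhobar} = \dim \Hc{\F_p(a^j d^{g-j})}_{\rhobar}$, and then to reconcile each such term with the corresponding summand $\Hc{\F_p(a^{2j-g})}_{\rhobar \otimes \omega^{j-g}}$ via a $\det$-twist.

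First I would establish $\Htwoc{F_j}_{\rhobar} = 0$ for every $j$ by ascending induction on $j$, exactly along the template of Lemma \ref{lemma:eis}. The input is Lemma \ref{lemma:Gamma0eis}: the graded piece $\F_p(a^j d^{g-j})$ either has vanishing $H^2_c$ (if $2j \not\equiv g \bmod p-1$) or has one-dimensional $H^2_c$ on which $T_\ell$ acts by $\ell^{g-j} + \ell^{g-j+1}$ -- the Eisenstein eigensystem $(\mathbf 1 \oplus \omega) \otimes \omega^{g-j}$, excluded by \eqref{noE2}. Together with $H^0_c = 0$ (the modular curve is non-compact), the long exact sequence attached to $0 \to F_{j-1} \to F_j \to F_j/F_{j-1} \to 0$ then collapses after $\rhobar$-localization into short exact sequences
\begin{equation*}
0 \to \Hc{F_{j-1}}_{\rhobar} \to \Hc{F_j}_{\rhobar} \to \Hc{F_j/F_{j-1}}_{\rhobar} \to 0,
\end{equation*}
from which induction on $j$ gives $\dim \Hc{\Sym^g(\F_p^2)}_{\rhobar} = \sum_{j=0}^{g} \dim \Hc{\F_p(a^j d^{g-j})}_{\rhobar}$.

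For the final identification I would observe that $a^j d^{g-j} = a^{2j-g}\cdot (ad)^{g-j}$, so $\F_p(a^j d^{g-j}) \simeq \F_p(a^{2j-g}) \otimes (\det)^{g-j}$ as modules for the Hecke semigroup generated by $B$ together with the coset representatives $\begin{smallpmatrix} \ell & \\ & 1 \end{smallpmatrix}$ and $\begin{smallpmatrix} 1 & b \\ & \ell \end{smallpmatrix}$. Because $\det$ is trivial on $\Gamma_0$, this does not alter the underlying $\Gamma_0$-module; but on each of those coset representatives $\det$ evaluates to $\ell$, so tensoring by $(\det)^{g-j}$ multiplies every $T_\ell$-eigenvalue by $\ell^{g-j}$, i.e., twists the Hecke eigensystem by $\omega^{g-j}$. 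Consequently
\begin{equation*}
\Hc{\F_p(a^j d^{g-j})}_{\rhobar} = \Hc{\F_p(a^{2j-g})}_{\rhobar \otimes \omega^{j-g}},
\end{equation*}
and summing over $j$ yields the proposition.

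The main obstacle is purely bookkeeping: one has to be sure the twist goes the correct direction, so that the localization shifts by $\omega^{j-g}$ rather than $\omega^{g-j}$. This is verified directly against the explicit $T_\ell$ formula that appears in the proof of Lemma \ref{lemma:Gamma0eis}.
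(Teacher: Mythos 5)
Your proof is correct and follows essentially the same strategy as the paper: filter $\Sym^g(\F_p^2)$ by the $F_j$, kill the localized $H^2_c$ using Lemma \ref{lemma:Gamma0eis} together with \eqref{noE2}, split $H^1_c$ into the one-dimensional subquotient contributions, and then perform the $\det$-twist to relate $\F_p(a^j d^{g-j})$ to $\F_p(a^{2j-g})$ with an $\omega^{j-g}$-twist of the localization. The one small divergence is directional: you run the filtration bottom-up via $0 \to F_{j-1} \to F_j \to F_j/F_{j-1} \to 0$, which forces a preliminary induction to establish $\Htwoc{F_j}_{\rhobar} = 0$ for the subobjects $F_j$ before the splitting of $H^1_c$ can be asserted. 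The paper instead runs it top-down via $0 \to F_j/F_{j-1} \to \Sym^g(\F_p^2)/F_{j-1} \to \Sym^g(\F_p^2)/F_j \to 0$; there the $H^2_c$ term obstructing right-exactness is $\Htwoc{F_j/F_{j-1}}_{\rhobar}$ itself, so Lemma \ref{lemma:Gamma0eis} applies immediately without the extra inductive step. Both give the same result with the same ingredients; the paper's choice is marginally more economical.
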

\begin{proof}
If $0 \leq j \leq g$, then there is a natural exact sequence
\begin{equation*}
0 \rightarrow H^1_c(\Gamma_0,F_j/F_{j-1})_{\rhobar} \rightarrow H^1_c(\Gamma_0,\Sym^g(\Fp^2)/F_{j-1})_{\rhobar} \rightarrow H^1_c(\Gamma_0,\Sym^g(\Fp^2)/F_j)_{\rhobar} \rightarrow 0
\end{equation*}
(exactness on the right follows from Lemma \ref{lemma:Gamma0eis}). We deduce by induction that
\begin{equation*}
\dim H^1_c(\Gamma_0,\Sym^g(\Fp^2))_{\rhobar} = \sum_{j=0}^g \dim H^1_c(\Gamma_0,F_j/F_{j-1})_{\rhobar}.
\end{equation*}
The proposition then follows from observing that 
\begin{equation*}
F_j/F_{j-1} = \Fp(a^j d^{g-j}) = \Fp(a^{2j-g}(ad)^{g-j}),
\end{equation*}
and so
\begin{equation*}
H^1_c(\Gamma_0,F_j/F_{j-1})_{\rhobar} \simeq H^1_c(\Gamma_0,\Fp(a^{2j-g}))_{\rhobar \otimes \omega^{j-g}}.
\end{equation*}
This completes the proof.
\end{proof}

\begin{corollary}\label{cor:Dp-quasi-linearity}
If $g \geq p-1$, then
\begin{equation*}
D_p(g,\rhobar,t) = D_p(g-(p-1),\rhobar,t) + \sum_{i=0}^{p-2} \dim H^1_c(\Gamma,I_{g-2i})_{\rhobar\otimes \omega^{t-i}}
\end{equation*}
\end{corollary}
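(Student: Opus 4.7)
The plan is to apply Proposition \ref{prop:Symg-decompose} --- after replacing $\rhobar$ with $\rhobar \otimes \omega^t$ --- to both $g$ and $g-(p-1)$, and then match up terms. The main tools beyond Proposition \ref{prop:Symg-decompose} are Shapiro's lemma (Hecke-equivariantly, since $I_g = \Ind_{\Gamma_0}^{\Gamma} \F_p(a^g)$ and $\mathbf T$ acts only through primes away from $Np$) and the periodicity of the characters $a^\ast$ and $\omega^\ast$ modulo $p-1$.

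First, re-indexing Proposition \ref{prop:Symg-decompose} by $i = g - j$ yields the convenient form
\begin{equation*}
D_p(g,\rhobar,t) = \sum_{i=0}^{g} \dim H^1_c(\Gamma_0, \F_p(a^{g-2i}))_{\rhobar \otimes \omega^{t-i}},
\end{equation*}
and likewise
\begin{equation*}
D_p(g-(p-1),\rhobar,t) = \sum_{i=0}^{g-(p-1)} \dim H^1_c(\Gamma_0, \F_p(a^{g-(p-1)-2i}))_{\rhobar \otimes \omega^{t-i}}.
\end{equation*}

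Next, I would split the sum for $D_p(g,\rhobar,t)$ at index $p-1$. The head ($0 \leq i \leq p-2$) is identified with $\sum_{i=0}^{p-2} \dim H^1_c(\Gamma, I_{g-2i})_{\rhobar \otimes \omega^{t-i}}$ via Shapiro's lemma. For the tail ($p-1 \leq i \leq g$), substitute $i \mapsto i+(p-1)$ so that $i$ now runs from $0$ to $g-(p-1)$: the new exponent $g - 2i - 2(p-1)$ differs from $g-(p-1)-2i$ by $-(p-1)$, and the new twist $\omega^{t-i-(p-1)}$ differs from $\omega^{t-i}$ by $\omega^{-(p-1)}$. Since $a^{p-1}=1$ in $\F_p^\times$ (so $\F_p(a^\ast)$ factors through $\mathbf{Z}/(p-1)\mathbf{Z}$ as a $\Gamma_0$-module) and $\omega^{p-1}=1$, both shifts are trivial and the tail equals $D_p(g-(p-1),\rhobar,t)$ on the nose. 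Adding the head and tail gives the corollary.

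The main obstacle is nothing beyond the bookkeeping of the re-indexing and tracking the mod-$(p-1)$ periodicity; no new cohomological input is needed.
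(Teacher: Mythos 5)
Your proof is correct and takes essentially the same route as the paper: apply Proposition \ref{prop:Symg-decompose} (after twisting by $\omega^t$) to both $g$ and $g-(p-1)$, identify the head of the first sum with $\sum_i \dim H^1_c(\Gamma,I_{g-2i})_{\rhobar\otimes\omega^{t-i}}$ via Shapiro's lemma, and match the tail with the second sum using the $(p-1)$-periodicity of $a^\ast$ and $\omega^\ast$. The paper phrases this as directly taking the difference $D_p(g,\rhobar,t)-D_p(g-(p-1),\rhobar,t)$ and then reindexing, but the underlying calculation is identical; you simply make the mod-$(p-1)$ cancellation more explicit.
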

\begin{proof}
We note first that Shapiro's lemma implies $H^1_c(\Gamma,I_g) \simeq H^1_c(\Gamma_0,\F_p(a^g))$ for any $g$. Thus Proposition \ref{prop:Symg-decompose} implies that
\begin{equation*}
D_p(g,\rhobar,t) -  D_p(g-(p-1),\rhobar,t)= \sum_{j=g-(p-1)+1}^{g} \dim H^1_c(\Gamma,I_{2j-g})_{\rhobar\otimes \omega^{t+j-g}}.
\end{equation*}
Now this sum differs from the sum we want just by a change of indexing.
\end{proof}

\subsection{$\rhobar$-component of cuspforms}
\label{subsec:rhobar-spaces}
We turn now towards studying dimensions of spaces of $\rhobar$-components of cuspforms. For $k\geq 2$, $\Gamma'=\Gamma$ or $\Gamma_0$, and $\rhobar$ fixed we write $\mathcal E_k(\Gamma')_{\rhobar}$ for the $\rhobar$-component of the space of Eisenstein series of weight $k$. By the classification of Eisenstein series,  and because $\rhobar$ satisfies \eqref{noE2}, the function $k \mapsto \dim \mathcal E_k(\Gamma')_{\rhobar}$ depends only on $k \bmod 2$, and $2\dim \mathcal E_k(\Gamma)_{\rhobar} = \dim \mathcal E_{k}(\Gamma_0)_{\rhobar}$. We note that these spaces vanish if $\rhobar$ is irreducible.
\begin{lemma}\label{lemma:ES}
 If $\Gamma' = \Gamma$ or $\Gamma_0$, and $k\geq 2$ is an integer, then
\begin{equation*}
\dim \mathcal E_k(\Gamma')_{\rhobar} + 2 \dim S_k(\Gamma')_{\rhobar} =  \dim H^1_c(\Gamma',\Sym^{k-2}(\Fp^2))_{\rhobar}.
\end{equation*}
\end{lemma}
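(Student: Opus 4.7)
The identity is the Eichler--Shimura isomorphism, read off in the $\mathfrak m_{\rhobar}$-component of the Hecke algebra. My strategy is to reduce to the classical characteristic-zero Eichler--Shimura decomposition and then transfer back to $\F_p$-coefficients using Proposition \ref{prop:Zp-vs-Fp}.

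First, I would use Proposition \ref{prop:Zp-vs-Fp} to replace the right-hand side by the $\Qp$-dimension of $H^1_c(\Gamma', \Sym^{k-2}(\Qp^2))_{\rhobar}$; then it suffices to establish the analogous identity with $\Qp$-coefficients. The classical Eichler--Shimura theorem, transported from $\mathbf C$ to $\Qp$ by base change along a fixed embedding $\overline{\mathbf Q}\hookrightarrow \overline{\Qp}$, provides a Hecke-equivariant isomorphism
\begin{equation*}
H^1_c(\Gamma', \Sym^{k-2}(\overline{\Qp}^2)) \simeq S_k(\Gamma')_{\overline{\Qp}} \oplus \overline{S_k(\Gamma')}_{\overline{\Qp}} \oplus \mathcal E_k(\Gamma')_{\overline{\Qp}},
\end{equation*}
so on dimensions one has $2\dim S_k(\Gamma') + \dim \mathcal E_k(\Gamma')$, with the cuspidal part contributing twice from the holomorphic and anti-holomorphic summands.

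Second, I would localize at $\mathfrak m_{\rhobar}$. Localization is exact and the Eichler--Shimura decomposition is Hecke-equivariant, so the identity descends; the only nontrivial point is that $\dim \overline{S_k(\Gamma')}_{\rhobar} = \dim S_k(\Gamma')_{\rhobar}$. This holds because the Hecke eigensystems on $\overline{S_k(\Gamma')}$ are the complex conjugates of those on $S_k(\Gamma')$ (complex conjugation being realized as an element of $\Gal(\overline{\mathbf Q}/\mathbf Q)$ via the chosen embedding), and this Galois action preserves $\mathfrak m_{\rhobar}$ because $\rhobar$ is defined over $\overline{\F_p}$ and the maximal ideal $\mathfrak m_{\rhobar}$ depends only on residual Hecke eigenvalues. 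Putting these pieces together yields the desired identity.

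The main obstacle is the reconciliation of the classical complex-analytic Eichler--Shimura decomposition with the $\Z_p$-algebraic Hecke action used to define $\mathfrak m_{\rhobar}$. A cleaner alternative, if available in this paper's framework, would be to invoke an intrinsic $p$-adic Eichler--Shimura statement (via the Hodge filtration on de Rham cohomology over $\Qp$), or to work with parabolic cohomology and appeal to Poincar\'e duality to obtain the factor of two on the cuspidal part without ever passing through complex conjugation.
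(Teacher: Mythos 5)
Your overall strategy --- reduce the right-hand side to a $\Q_p$-dimension via Proposition~\ref{prop:Zp-vs-Fp} and torsion-freeness, then invoke Eichler--Shimura over $\Q_p$ --- is the same as the paper's. The paper simply cites a form of the $p$-adic Eichler--Shimura isomorphism already localized (Bellaiche--Dasgupta, Prop.~2.5), so it never has to open the antiholomorphic can of worms.

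The step where you try to supply the missing content yourself has a genuine gap. You claim that $\dim \overline{S_k(\Gamma')}_{\rhobar} = \dim S_k(\Gamma')_{\rhobar}$ because ``complex conjugation preserves $\mathfrak m_{\rhobar}$.'' This is not true in general. The Hecke eigenvalues on the antiholomorphic summand are the complex conjugates $\overline{a_\ell(f)}$, and $\mathfrak m_{\rhobar}$ is cut out by comparing residual eigenvalues under a \emph{fixed} embedding $\iota\colon\overline{\mathbf Q}\hookrightarrow\overline{\Q}_p$. The embeddings $\iota$ and $\iota\circ c$ ($c$ complex conjugation) need not induce the same $p$-adic place, so $\iota(\overline{a_\ell(f)})$ and $\iota(a_\ell(f))$ can have different reductions mod $p$. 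Concretely, at level $\Gamma_1(N)$ a newform $f$ with nebentypus $\chi$ has $\overline{a_\ell(f)} = \overline\chi(\ell)\,a_\ell(f)$, so the eigensystem you get is that of the twist $f\otimes\overline\chi$, whose residual representation is $\rhobar\otimes\overline\chi$ --- generally a different $\rhobar$. So the antiholomorphic summand, taken literally, does not localize to something of the same dimension as $S_k(\Gamma')_{\rhobar}$ for the reason you give.

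The correct way to get the factor of two $\Q_p$-rationally --- and the one you actually gesture at in your final paragraph --- is via the $\pm$-eigenspaces for the archimedean involution $\iota_\infty$ on $H^1_c(\Gamma', \Sym^{k-2}(\Q))$: each is a $\Q$-rational Hecke module isomorphic to $S_k(\Gamma';\Q)$ (via a choice of period), so each localizes to a copy of $S_k(\Gamma')_{\rhobar}$ after base change to $\Q_p$. Equivalently, one can note that the characteristic polynomial of $T_\ell$ on parabolic cohomology over $\Q$ is the square of that on cusp forms, which forces the localized dimensions to double. Either of those replaces your complex-conjugation step; the rest of your argument then matches the paper's.
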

\begin{proof}
First note that $H^1_c(\Gamma',\Sym^{k-2}(\Zp^2))_{\rhobar} \subset H^1_c(\Gamma',\Sym^{k-2}(\Qp^2)_{\rhobar}$ is an isomorphism after inverting $p$. By Eichler--Shimura (see \cite[Proposition 2.5]{BellaicheDasgupta}) we deduce that 
\begin{equation*}
\dim \mathcal E_k(\Gamma')_{\rhobar} + 2 \dim S_k(\Gamma')_{\rhobar} = \rank H^1_c(\Gamma',\Sym^{k-2}(\Zp^2))_{\rhobar}.
\end{equation*}
The result as stated now follows from Proposition \ref{prop:Zp-vs-Fp}.
\end{proof}

Now we turn to the data needed for an abstract ghost series. We will make a slight switch in notation and write $\rbar$ for a continuous, odd, and semi-simple Galois representation modulo $p$ that is modular of  level $N$. That is, $S_k(\Gamma)_{\rbar}$ is non-zero for some $k$.  The previous results will be applied to twists $\rhobar = \rbar \otimes \omega^t$ into our setup. 

In this direction, if $t \in \Z/(p-1)\Z$ and $k$ is an integer such that $k \congruent k(\rbar) + 2t \bmod p-1$, we define
\begin{align*}
S(k,t) &= \dim S_{k}(\Gamma)_{\rbar \otimes \omega^t};\\
S(k,t)^{\new} &= \dim S_k(\Gamma_0)^{p-\new}_{\rbar \otimes \omega^t}.
\end{align*}
(The condition on $k$ is necessary to get non-zero values.) Further, write $k_{0,t}$ for the least integer $2 \leq k < p+1$ such that $k_{0,t} \congruent k(\rbar) + 2t \bmod p-1$. Then, for $n\geq 0$ define $k_{n,t} = k_{0,t} + n(p-1)$. Finally, the data we will use for an abstract ghost series will be 
\begin{align*}
d_t(n) &= S(k_{n,t},t);\\
d_t^{\new}(n) &= S^{\new}(k_{n,t},t).
\end{align*}
We note that these functions are defined only on $n\geq 0$. In the next subsections, we show that they are quasi-linear, so that we can extend them fully to $n \in \Z$. Our method for that requires defining $d_{p,t}(n) = \dim S_{k_{n,t}}(\Gamma_0)_{\rbar \otimes \omega^t}$, which satisfies $d_{p,t} = 2d_t + d^{\new}_t$ on its domain. After that, we check the condition \eqref{ND} is satisfied as well.

\subsubsection{The condition \eqref{QL}}
\begin{proposition}\label{prop:quasi-linear-galoisreps}
\leavevmode
\begin{enumerate}
\item The function $d_t$ is quasi-linear on $n\geq 0$, with period $P_{d_t} = p+1$;
\item The function $d_{p,t}$ is quasi-linear on $n\geq 0$, with period $P_{d_{p,t}} = 1$;
\item The defects $Q_{d_t}$ and $Q_{d_{p,t}}$ of quasi-linearity for $d_t$ and $d_{p,t}$ are equal.
\end{enumerate}
\end{proposition}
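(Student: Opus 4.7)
The plan is to reduce each part to a statement about the mod $p$ cohomology dimensions $D(g, \rbar, t)$ and $D_p(g, \rbar, t)$ via Lemma \ref{lemma:ES}, then to invoke Corollaries \ref{cor:quasi-linearity-modsymb} and \ref{cor:Dp-quasi-linearity}. Applying Lemma \ref{lemma:ES} with $\rhobar = \rbar\otimes\omega^t$ and $k = k_{n,t}$ yields
\begin{align*}
2 d_t(n) &= D(k_{n,t}-2, \rbar, t) - \dim \mathcal E_{k_{n,t}}(\Gamma)_{\rbar\otimes\omega^t},\\
2 d_{p,t}(n) &= D_p(k_{n,t}-2, \rbar, t) - \dim \mathcal E_{k_{n,t}}(\Gamma_0)_{\rbar\otimes\omega^t}.
\end{align*}
Since $p \geq 5$, $p-1$ is even, so $k_{n,t}$ has constant parity as $n$ varies and the Eisenstein contributions are $n$-independent. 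Hence it suffices to prove (a) and (b) with $D$ and $D_p$ in place of $2 d_t$ and $2 d_{p,t}$, respectively; part (c) will then follow by comparing the resulting defect formulas, halved by the same factor of $2$ on both sides.

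For part (a), I would apply Corollary \ref{cor:quasi-linearity-modsymb} at $g = k_{n+p+1, t} - 2$. The hypothesis $g \geq p^2-1$ holds for every $n \geq 0$ because $k_{0,t} \geq 2$ and $(p+1)(p-1)=p^2-1$, and one has $g-(p^2-1) = k_{n,t}-2$. The corollary then yields
\[
D(k_{n+p+1,t}-2,\rbar,t) - D(k_{n,t}-2,\rbar,t) = \sum_{i=0}^{p-2}\dim H^1_c(\Gamma, I_{k_{n+p+1,t}-2-2i})_{\rbar\otimes\omega^{t-i}}.
\]
The right-hand side is visibly independent of $n$ because $I_g$ depends only on $g \bmod p-1$ and $k_{n+p+1,t} \equiv k_{0,t} \pmod{p-1}$, which establishes part (a). Part (b) proceeds along identical lines using Corollary \ref{cor:Dp-quasi-linearity} applied at $g = k_{n+1,t}-2$, where the bound $g \geq p-1$ again holds for all $n \geq 0$.

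Part (c) will fall out from the parallel structure of the two defect sums. The right-hand sides of both corollaries are of the form $\sum_{i=0}^{p-2}\dim H^1_c(\Gamma, I_{g-2i})_{\rbar \otimes \omega^{t-i}}$, and these sums depend only on $g \bmod p-1$. In both applications one has $g \equiv k_{0,t} \pmod{p-1}$, so the two sums are literally equal; dividing by $2$ gives $Q_{d_t} = Q_{d_{p,t}}$. I do not anticipate a serious obstacle in this proposition: the cohomological machinery of Section \ref{subsec:mod-symbs} has already done the conceptual work, and all that remains are the elementary $g$-bounds ensuring the two corollaries apply for all $n\geq 0$, together with the parity observation about the Eisenstein dimensions.
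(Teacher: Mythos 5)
Your proof follows the paper's argument exactly: reduce via Lemma \ref{lemma:ES} to the cohomological dimension functions $D$ and $D_p$, observe the Eisenstein contributions are $n$-independent, and apply Corollaries \ref{cor:quasi-linearity-modsymb} and \ref{cor:Dp-quasi-linearity}, noting that both defect sums depend only on $g\bmod p-1$ and are therefore equal. You supply more detail than the paper (explicit $g$-substitutions and bound-checking) but the underlying route is identical.
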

\begin{proof}
We prove all three statements at the same time. Recall the notation $D$ and $D_p$ from the end of Section \ref{subsec:mod-symbs}.

By Lemma \ref{lemma:ES}, we have
\begin{equation*}
d_t(n) = {1\over 2}\left(D(k_n-2,\rbar,t) -\dim \mathcal E_{k_n}(\Gamma)_{\rbar\otimes \omega^t}\right),
\end{equation*}
and
\begin{equation*}
d_{p,t}(n) = {1\over 2}\left(D_p(k_n-2,\rbar,t) - \dim \mathcal E_{k_n}(\Gamma_0)_{\rbar\otimes \omega^t}\right).
\end{equation*}
The dimensions of the Eisenstein series only depend on $k_n \bmod 2$, so they are independent of $n$ (since $p$ is odd). Thus the claims (a) through (c) follow from Corollary \ref{cor:quasi-linearity-modsymb} and Corollary \ref{cor:Dp-quasi-linearity}. (The defects of quasi-linearity can be seen to be positive using Theorem \ref{theorem:ash-stevens}(a); see the proof of Corollary \ref{cor:qldt}.)
\end{proof}

\begin{remark}
The defect $Q_{d_t}$ is always effectively computable. In Section \ref{subsec:buzzard-regular} below, we will study $Q_{d_t}$ when $\rbar$ is globally irreducible, but reducible upon restriction to a decomposition group at $p$.
\end{remark}

\begin{corollary}
$d_t^{\new}$ is quasi-linear. In particular, so is $d_t + d_t^{\new}$. The periods and defects  may be taken to be
\begin{align*}
P_{d_t^{\new}} &= p+1; & P_{d_t+d_t^{\new}} &= p+1;\\
Q_{d_t^{\new}} &= (p-1)Q_{d_t}; & Q_{d_t+d_t^{\new}} &= pQ_{d_t}.
\end{align*}
\end{corollary}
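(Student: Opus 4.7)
The plan is to deduce everything from Proposition \ref{prop:quasi-linear-galoisreps} together with the identity $d_{p,t} = 2d_t + d_t^{\new}$ that holds on the common domain $n\geq 0$ (which is simply the decomposition $\dim S_k(\Gamma_0) = 2\dim S_k(\Gamma) + \dim S_k(\Gamma_0)^{p\text{-new}}$ restricted to a $\rbar\otimes\omega^t$-isotypic component). Solving for the new-part function gives $d_t^{\new} = d_{p,t} - 2 d_t$, and this expresses $d_t^{\new}$ as a difference of two functions that are already known to be quasi-linear on $n\geq 0$. Since differences of quasi-linear functions (extended if necessary to a common period) are quasi-linear, we get quasi-linearity of $d_t^{\new}$ on $n\geq 0$, and the fourth general fact recorded right after Definition \ref{defn:quasi-linear} lets us extend $d_t^{\new}$ uniquely to all of $\mathbf{Z}$.

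For the precise period and defect, I would simply compute. Using that $d_t$ has period $P_{d_t} = p+1$ with defect $Q_{d_t}$, and $d_{p,t}$ has period $P_{d_{p,t}} = 1$ with defect $Q_{d_{p,t}} = Q_{d_t}$ (part (c) of Proposition \ref{prop:quasi-linear-galoisreps}), I would check
\begin{align*}
d_t^{\new}(n+(p+1)) &= d_{p,t}(n+(p+1)) - 2\,d_t(n+(p+1)) \\
&= \bigl(d_{p,t}(n) + (p+1)\,Q_{d_{p,t}}\bigr) - 2\bigl(d_t(n) + Q_{d_t}\bigr) \\
&= d_t^{\new}(n) + (p+1)\,Q_{d_t} - 2\,Q_{d_t} \\
&= d_t^{\new}(n) + (p-1)\,Q_{d_t},
\end{align*}
which gives $P_{d_t^{\new}} = p+1$ and $Q_{d_t^{\new}} = (p-1)Q_{d_t}$. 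Adding this to the corresponding identity $d_t(n+(p+1)) = d_t(n) + Q_{d_t}$ gives
\begin{equation*}
(d_t+d_t^{\new})(n+(p+1)) = (d_t+d_t^{\new})(n) + p\,Q_{d_t},
\end{equation*}
which yields the final assertion with $P_{d_t+d_t^{\new}} = p+1$ and $Q_{d_t+d_t^{\new}} = p\,Q_{d_t}$.

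There is really no obstacle here — the content has already been absorbed into Proposition \ref{prop:quasi-linear-galoisreps}. The only thing worth remarking on is the extension to negative $n$: one defines $d_t^{\new}$ on $\mathbf{Z}$ by the formula $d_t^{\new}(n) = d_t^{\new}(n+m(p+1)) - m(p-1)Q_{d_t}$ for any $m$ large enough that $n + m(p+1) \geq 0$, and independence of $m$ is immediate from the established recursion on the domain $n \geq 0$.
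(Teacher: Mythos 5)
Your proof is correct and follows the same route as the paper: express $d_t^{\new} = d_{p,t} - 2d_t$, invoke Proposition \ref{prop:quasi-linear-galoisreps} for the periods and (equal) defects of $d_{p,t}$ and $d_t$, and read off $P_{d_t^{\new}} = p+1$, $Q_{d_t^{\new}} = (p-1)Q_{d_t}$ and then $Q_{d_t + d_t^{\new}} = pQ_{d_t}$ by direct computation. You simply spell out the arithmetic that the paper leaves implicit.
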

\begin{proof}
We note that $d_t^{\new} = d_{p,t} - 2d_t$ is a sum of quasi-linear functions, hence quasi-linear. Moreover, it can be taken to have period $P_{d_t^{\new}} = P_{d_t} = p+1$, in which case its defect is $Q_{d_t^{\new}} = (p-1)Q_{d_t}$. By the same logic, $d_t+d_t^{\new}$ is quasi-linear with the claimed periods and defects.
\end{proof}

We reiterate that having now checked all the functions above are quasi-linear, we extend their defintions to all $n \in \Z$.

\subsubsection{The condition \eqref{ND}}
\begin{proposition}\label{prop:dt-ND}
$d_t$ is non-decreasing.
\end{proposition}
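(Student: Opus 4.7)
The plan is to adapt the Eisenstein-multiplication trick from Proposition~\ref{prop:fullND}, but to carry it out modulo $p$ rather than in characteristic zero. The reason for the modification is that multiplication by a weight $p-1$ Eisenstein series is \emph{not} Hecke-equivariant in characteristic zero: the action of $T_\ell$ in weight $k$ differs from its action in weight $k+(p-1)$ by a factor of $\ell^{p-1}$, so a verbatim copy of the $\Gamma_0(N)$-level argument will not preserve the $\rhobar$-component. Modulo $p$, however, $\ell^{p-1}\equiv 1$ for $\ell\nmid p$, so Hecke-equivariance is restored exactly for weight shifts by $\delta = p-1$.

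The argument would proceed as follows. First, by the quasi-linearity of $d_t$ established in Proposition~\ref{prop:quasi-linear-galoisreps}, the differences $d_t(n+1) - d_t(n)$ are periodic in $n$, so it suffices to check $d_t(n+1)\geq d_t(n)$ for $n\geq 0$. In that range $d_t$ is an honest dimension of a space of cusp forms. Next, since $p \geq 5$, the level-$1$ Eisenstein series $E_{p-1}$ is holomorphic with integral $q$-expansion and satisfies $E_{p-1}\equiv 1\pmod{p}$ by von~Staudt--Clausen. Multiplication by $E_{p-1}$ then gives an injection
\begin{equation*}
\phi: S_{k_{n,t}}(\Gamma,\Z_p)\hookrightarrow S_{k_{n+1,t}}(\Gamma,\Z_p),
\end{equation*}
whose reduction $\bar\phi$ modulo $p$ acts as the identity on $q$-expansions; by the $q$-expansion principle for mod-$p$ cusp forms of level $\Gamma_1(N)$, $\bar\phi$ is therefore injective. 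Hecke-equivariance of $\bar\phi$ for the $\Z_p$-algebra $\mathbf T$ then follows from the standard $q$-expansion formula for $T_\ell$ together with $\ell^{p-1}\equiv 1\pmod p$ for $\ell\nmid p$.

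Localizing $\bar\phi$ at $\mathfrak m_{\rbar\otimes\omega^t}$ yields an injection on $\rhobar$-components modulo $p$. To transfer this inequality back to characteristic zero, I would use that $S_k(\Gamma,\Z_p)_{\rbar\otimes\omega^t}$ is a direct summand of the free $\Z_p$-module $S_k(\Gamma,\Z_p)$ (hence itself free), and that localization at $\mathfrak m_{\rbar\otimes\omega^t}$ commutes with reduction modulo $p$; this gives
\begin{equation*}
d_t(n) = \rank_{\Z_p} S_{k_{n,t}}(\Gamma,\Z_p)_{\rbar\otimes\omega^t} = \dim_{\F_p}\bigl(S_{k_{n,t}}(\Gamma,\Z_p)\otimes_{\Z_p}\F_p\bigr)_{\rbar\otimes\omega^t},
\end{equation*}
and analogously in weight $k_{n+1,t}$. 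The mod-$p$ injection thus upgrades to the desired inequality $d_t(n)\leq d_t(n+1)$.

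The main obstacle is precisely the failure of Hecke-equivariance of $\times E_{p-1}$ in characteristic zero, which is what prevents a straightforward port of the proof of Proposition~\ref{prop:fullND}. The mod-$p$ workaround is clean but requires assembling three somewhat orthogonal standard ingredients---the $q$-expansion principle in $\Gamma_1(N)$-level, the congruence $\ell^{p-1}\equiv 1\pmod p$ for $\ell\nmid p$, and the comparison of $\rhobar$-ranks over $\Z_p$ with $\rhobar$-dimensions over $\F_p$ via freeness---each fine for $p\geq 5$ and $p\nmid N$.
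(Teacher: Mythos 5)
Your argument is correct and follows essentially the same route as the paper: both proofs reduce modulo $p$, use multiplication by $E_{p-1}\equiv 1\pmod p$ to get a Hecke-equivariant injection (Hecke-equivariance restored mod $p$ by $\ell^{p-1}\equiv 1$), deduce $d_t(n+1)\geq d_t(n)$ for $n\geq 0$, and then invoke quasi-linearity to extend the inequality to all $n$. You spell out explicitly why the characteristic-zero argument fails and why the $q$-expansion principle gives injectivity, which the paper leaves implicit, but the substance is identical.
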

\begin{proof}
This is a more robust version of the proof of Proposition \ref{prop:fullND}. Specifically, if $k\geq 2$, then
$$
\dim S_{k}(\Gamma) = \dim_{\Fp} S_k(\Gamma,\Fp)
$$
where $S_k(\Gamma,\Fp)$ is the space of mod $p$ modular forms. Then, since $p>3$, multiplication by the Eisenstein series $E_{p-1}$ yields a Hecke-equivariant injection
$$
S_k(\Gamma,\Fp)_{\rhobar} \hookrightarrow S_{k+p-1}(\Gamma,\Fp)_{\rhobar}.
$$
Thus $\dim S_{k+p-1}(\Gamma)_{\rhobar}\geq \dim S_{k}(\Gamma)_{\rhobar}$ if $k\geq 2$ and hence $\dt(n+1)\geq \dt(n)$ for $n \geq 0$. Since $\dt(n)$ is quasi-linear in $n$, it follows that $\dt(n+1)\geq \dt(n)$ in general.
\end{proof}

\begin{proposition}
Both $\ddpt$ and $\dt + \dnt$ are non-decreasing.
\end{proposition}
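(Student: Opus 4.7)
The plan is to treat the two functions separately; the case of $\dt+\dnt$ is the harder one.

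For $\ddpt$, we repeat the argument of Proposition~\ref{prop:dt-ND} with $\Gamma$ replaced by $\Gamma_0$. Namely, since $p>3$, multiplication by the weight-$(p-1)$ Eisenstein series of level $1$ gives a Hecke-equivariant injection $S_k(\Gamma_0,\Fp)_{\rbar\otimes\omega^t}\hookrightarrow S_{k+p-1}(\Gamma_0,\Fp)_{\rbar\otimes\omega^t}$. Combined with the equality of characteristic-$0$ and characteristic-$p$ dimensions for the $(\rbar\otimes\omega^t)$-component (via Proposition~\ref{prop:Zp-vs-Fp}), this yields $\ddpt(n+1)\geq\ddpt(n)$ for $n\geq 0$, and quasi-linearity (Proposition~\ref{prop:quasi-linear-galoisreps}) propagates the inequality to all $n\in\Z$.

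For $\dt+\dnt$ we use the identity $\dt+\dnt=\ddpt-\dt$ together with Lemma~\ref{lemma:ES}. Because $\dim\mathcal E_{k_n}(\Gamma)_{\rbar\otimes\omega^t}$ depends only on $k_n\bmod 2$ (and so is constant in $n$), and $\dim\mathcal E_{k_n}(\Gamma_0)_{\rbar\otimes\omega^t}$ is twice it, the desired monotonicity reduces to proving
$$D_p(g+(p-1),\rbar,t)-D_p(g,\rbar,t)\;\geq\;D(g+(p-1),\rbar,t)-D(g,\rbar,t),$$
where $g=k_n-2$. The crux is that both sides are controlled by the single combinatorial sum
$$S:=\sum_{i=0}^{p-2}\dim H^1_c(\Gamma,I_{g+(p-1)-2i})_{\rbar\otimes\omega^{t-i}}.$$
By Corollary~\ref{cor:Dp-quasi-linearity}, $S$ equals the left-hand side of the displayed inequality; by Corollary~\ref{cor:quasi-linearity-modsymb}, for $g\geq p(p-1)$ the same $S$ also equals $D(g+(p-1),\rbar,t)-D(g-p(p-1),\rbar,t)$. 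Since $\dt$ is non-decreasing by Proposition~\ref{prop:dt-ND} and the Eisenstein contribution to $D$ is constant in $n$, the function $g\mapsto D(g,\rbar,t)$ along $g\equiv k_{0,t}-2\pmod{p-1}$ is itself non-decreasing, so $D(g)\geq D(g-p(p-1))$. Subtracting gives $D(g+(p-1))-D(g)\leq S$, exactly as needed.

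The argument above works only for $g\geq p(p-1)$, but quasi-linearity of $\dt+\dnt$ (from Proposition~\ref{prop:quasi-linear-galoisreps}) extends the conclusion back to all $n$. The only real obstacle is a bookkeeping one: recognizing that the sum $S$ appears simultaneously as the $(p-1)$-step increment of $D_p$ and as the full $(p^2-1)$-period increment of $D$; once this is observed, the monotonicity of $\dt$ alone is enough to compare the two growth rates over a step of length $p-1$.
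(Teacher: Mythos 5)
Your proof is correct in substance, but it takes a noticeably more laborious route than the paper, essentially re-deriving from scratch facts that Proposition~\ref{prop:quasi-linear-galoisreps} has already packaged for you.

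For $\ddpt$, the paper simply notes that $\ddpt$ is quasi-linear with period $P_{d_{p,t}}=1$, so $\ddpt(n+1)-\ddpt(n)=\Qdpt$ for \emph{all} $n$, and $\Qdpt\geq 0$ (the positivity of the defects is observed in the proof of Proposition~\ref{prop:quasi-linear-galoisreps} via Theorem~\ref{theorem:ash-stevens}(a)). Your Eisenstein-series multiplication argument at level $\Gamma_0$ is a valid alternative, but be careful with the citation: Proposition~\ref{prop:Zp-vs-Fp} compares ranks of $H^1_c$ with $\Zp$ versus $\Fp$ coefficients, not dimensions of cuspform spaces; to convert between the two you also need Lemma~\ref{lemma:ES} (Eichler--Shimura), just as in the proof of Proposition~\ref{prop:dt-ND}.

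For $\dt+\dnt$, both you and the paper reduce to showing that the increment of $\ddpt$ dominates the increment of $\dt$. The paper does this in three lines: the left side equals $\Qdpt$ (period $1$); the right side is at most $\Qdt$ because $\dt$ is non-decreasing and the one-step increments sum to $\Qdt$ over a period of length $p+1$; and $\Qdt=\Qdpt$ by Proposition~\ref{prop:quasi-linear-galoisreps}(c). You instead re-prove the defect equality implicitly by identifying the common combinatorial sum $S$ appearing as a one-step increment of $D_p$ and a full-period increment of $D$ — which is exactly how Proposition~\ref{prop:quasi-linear-galoisreps}(c) was established — and then use monotonicity of $D$ across a gap of $p(p-1)$. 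This is logically sound (and the final step of propagating the conclusion from $g\geq p(p-1)$ to all $n$ via quasi-linearity of $\dt+\dnt$ is correct), but you could simply cite the already-proven equality $\Qdt=\Qdpt$ and skip the bookkeeping with $S$ entirely. One minor citation issue: the quasi-linearity of $\dt+\dnt$ is established in the corollary following Proposition~\ref{prop:quasi-linear-galoisreps}, not in that proposition itself.
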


\begin{proof}
By Proposition \ref{prop:quasi-linear-galoisreps}, we have $\ddpt(n+1) = \ddpt(n) + \Qdpt$. This implies that $\ddpt$ is non-decreasing. Further, $\dt + \dnt = \ddpt - \dt$, so to prove that $\dt + \dnt$ is non-decreasing we must verify that
$$
\ddpt(n+1) - \ddpt(n) \geq \dt(n+1) - \dt(n).
$$
The left hand side of this inequality equals $\Qdpt$.  Since $\dt$ is non-decreasing (Proposition \ref{prop:dt-ND}), the right hand side is at most $\Qdt$.  Since $\Qdt = \Qdpt$ (Proposition \ref{prop:quasi-linear-galoisreps} again) we are done.
\end{proof}

\subsection{The Buzzard regular case}\label{subsec:buzzard-regular}

Consider the following `Buzzard regular' condition on a $\rhobar$:
\begin{equation}
\label{BR}
\tag{BR}
\rhobar|_D \text{ is reducible}.
\end{equation}
Our goal here is to make the defects of quasi-linearity somewhat explicit under \eqref{BR}. Following the notation of the previous section, we write $\rhobar = \rbar \otimes \omega^t$, with $t \in \Z/(p-1)\Z$. We further impose the following condition:\ $\rbar$ is irreducible, and
\begin{equation}\label{eqn:rbar-special-form}
\rbar|_I \sim \begin{pmatrix} \omega^{k(\bar r)-1} & \ast \\ 0 & 1\end{pmatrix}.
\end{equation}
Since each twist $\rbar \otimes \omega^t$ is irreducible, the weight part of Serre's conjecture (\cite{Edixhoven-Weights}) implies we do not need to distinguish between the weights $k(\rbar\otimes \omega^t)$ and the explicit recipe for $k(\rbar \otimes \omega^t)$ given in \cite{Serre-SurLesRep}.

\begin{lemma}
\label{lemma:krbar_twists}
\leavevmode
\begin{enumerate}
\item
If $\rbar$ is non-split at $p$, then 
$
k(\rbar \otimes \omega^t) \leq p+1 \iff t = 0.
$
\item
If $\rbar$ is split at $p$, then 
$
k(\rbar \otimes \omega^t) \leq p+1 \iff t = 0 \tv{or} t=p-k(\rbar).
$
\end{enumerate}
\end{lemma}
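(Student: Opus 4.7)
The plan is to apply Serre's weight recipe from \cite{Serre-SurLesRep} (see also \cite{Edixhoven-Weights}) directly to the local form of $\rbar \otimes \omega^t$ obtained by twisting \eqref{eqn:rbar-special-form}. Writing $k = k(\rbar)$, the hypothesis immediately gives
\[
(\rbar \otimes \omega^t)|_I \sim \begin{pmatrix} \omega^{k-1+t} & \ast \\ 0 & \omega^t \end{pmatrix},
\]
with the extension class unchanged up to the canonical identification induced by the character twist. Set $\alpha_t = (k-1+t) \bmod (p-1)$ and $\beta_t = t$, both in $\{0,1,\dots,p-2\}$. Since $\rbar$ is irreducible and odd one has $k \in \{2,\dots,p-1,p+1\}$, so $\omega^{k-1} \neq 1$ and hence $\alpha_t \neq \beta_t$ for every $t$; this rules out the degenerate case of the recipe in which the two diagonal characters coincide.

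For part (b), when $\rbar$ is split at $p$, the splitness is preserved under twisting, so $(\rbar \otimes \omega^t)|_I$ is semisimple and Serre's recipe yields
\[
k(\rbar \otimes \omega^t) = 1 + p \min(\alpha_t,\beta_t) + \max(\alpha_t,\beta_t),
\]
which is at most $p+1$ precisely when $\min(\alpha_t, \beta_t) = 0$. The condition $\beta_t = 0$ forces $t = 0$, and $\alpha_t = 0$ is equivalent to $t \equiv 1 - k \equiv p - k \pmod{p-1}$, producing the two values claimed in the statement.

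For part (a), when $\rbar$ is non-split at $p$, non-triviality of the extension is preserved under twisting; combined with $\alpha_t \neq \beta_t$, this yields a canonical two-step filtration of $(\rbar \otimes \omega^t)|_{D}$ with submodule of character $\omega^{\alpha_t}$ and quotient of character $\omega^{\beta_t}$. In this case Serre's recipe assigns the single weight $k(\rbar \otimes \omega^t) = 1 + p\beta_t + \alpha_t$. For $t=0$ this recovers $k \leq p+1$, while for any $t \neq 0$ we have $\beta_t \geq 1$ and hence $k(\rbar \otimes \omega^t) \geq 1 + p > p+1$. The main obstacle I anticipate is verifying that the recipe in the non-split reducible case really selects the filtration ordering and does not silently minimize over both choices as in the split case; this is the standard content of the recipe whenever the diagonal characters are distinct. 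One must also check, in the borderline case $k=p+1$, that the character twist induces an isomorphism on $H^1(D, \omega)$ preserving the peu-ramifi\'e/tr\`es-ramifi\'e decomposition, so no accidental drop in Serre weight can occur for any nonzero $t$.
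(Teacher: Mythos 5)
The paper's proof uses a much lighter tool than you do: it invokes the single criterion from \cite[Section 2.7]{Serre-SurLesRep} that $k(\rhobar)\leq p+1$ if and only if $\rhobar|_I$ has a one-dimensional quotient on which inertia acts trivially. Applied to the twisted local form $\begin{smallpmatrix}\omega^{k(\rbar)-1+t} & \ast \\ 0 & \omega^t\end{smallpmatrix}$, one reads off immediately that the non-split case forces $\omega^t=1$, while the split case allows either factor to serve as the trivial quotient, giving $t=0$ or $t\equiv 1-k(\rbar)\equiv p-k(\rbar)$. No appeal to the numerical weight formula is needed.

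You instead unwind the full weight recipe. This can be made to work, and your $\min$/$\max$ formula in the split case is fine (your conclusion $k\leq p+1\iff\min(\alpha_t,\beta_t)=0$ is correct). But in the non-split case the identity $k(\rbar\otimes\omega^t)=1+p\beta_t+\alpha_t$ is false as stated: when $\alpha_t-\beta_t\equiv 1\pmod{p-1}$ (equivalently $k(\rbar)\equiv 2\pmod{p-1}$) and the extension is très ramifié, the recipe adds $p-1$, e.g.\ $\bar\rho_\Delta$ mod $11$ at $t=0$ has $k=12$ while your formula yields $2$. Your argument only uses the formula as a lower bound (so the final inequality $k>p+1$ for $t\neq 0$ survives), but you never establish that it is a lower bound in all sub-cases, and you explicitly flag the key step — that the recipe honors the filtration order rather than minimizing over both orderings in the non-split case — as an unresolved ``obstacle.'' That step is precisely the content the paper sidesteps by citing the quotient-triviality criterion. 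Your closing worry about ``accidental drops'' also points in the wrong direction: the peu/très correction can only raise $k$, never lower it. As written the proof has a real gap; replacing the formula by Serre's Section 2.7 criterion closes it and is shorter.
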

\begin{proof}
If $\rhobar = \rbar \otimes \omega^t$ then Serre states in \cite[Section 2.7]{Serre-SurLesRep} that $k(\rhobar)\leq p+1$ if and only if $\rhobar$ has a one-dimensional quotient on which inertia acts trivially. The equivalences we gave are apparent then.
\end{proof}

Recall that $S(k,t) = \dim S_k(\Gamma)_{\rbar\otimes \omega^t}$. Lemma \ref{lemma:krbar_twists} and Serre's conjecture allows us to determine $S(k,t)$ for $k \leq p+1$.

\begin{proposition}
\label{prop:base}
Assume that $k \leq p+1$. Then, the following conclusions hold.
\begin{enumerate}
\item
If $\rbar$ is non-split at $p$ and $k(\rbar) \neq 2$, then
$$
S(k,t) \neq 0 \iff (k,t) = (k(\rbar),0).
$$
\item
If $\rbar$ is non-split at $p$ and $k(\rbar) = 2$, then
$$
S(k,t) \neq 0 \iff (k,t) = (2,0) \tv{or} (p+1,0).
$$
\item
If $\rbar$ is split at $p$ and $k(\rbar) \neq 2$, then
$$
S(k,t) \neq 0 \iff (k,t) = (k(\rbar),0) \tv{or} (p+1-k(\rbar),p-k(\rbar)).
$$
\item
If $\rbar$ is split at $p$ and $k(\rbar) = 2$, then 
$$
S(k,t) \neq 0 \iff (k,t) = (2,0) \tv{or} (p-1,p-2) \tv{or} (p+1,0).
$$
\end{enumerate}
\end{proposition}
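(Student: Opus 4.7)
The proof will combine the modularity of $\rbar$ (hence of every twist $\rbar \otimes \omega^t$), the weight part of Serre's conjecture (\cite{Edixhoven-Weights}), Serre's explicit recipe (\cite{Serre-SurLesRep}) for the Serre weight, and Lemma~\ref{lemma:krbar_twists}. The goal is to enumerate the pairs $(k,t)$ with $2 \leq k \leq p+1$ and $S(k,t) \neq 0$.

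First I would observe that $S(k,t) \neq 0$ precisely when $\rbar \otimes \omega^t$ arises as the mod $p$ Galois representation attached to some eigenform in $S_k(\Gamma)$, so in particular $k(\rbar \otimes \omega^t) \leq k \leq p+1$. Lemma~\ref{lemma:krbar_twists} then restricts $t$ to $t = 0$ in the non-split case, and to $t \in \{0,\, p - k(\rbar)\}$ in the split case; conversely, for each admissible $t$ the twist $\rbar \otimes \omega^t$ is automatically modular from the modularity of $\rbar$.

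Second, for each admissible twist I would identify the set of weights $k \in [2,p+1]$ which actually carry $\rbar \otimes \omega^t$. The weight part of Serre's conjecture supplies $k = k(\rbar \otimes \omega^t)$ as an occurrence. Any other weight in this interval must differ by a multiple of $p-1$, so $k(\rbar \otimes \omega^t) + (p-1) \leq p+1$ forces $k(\rbar \otimes \omega^t) = 2$; in that case, multiplying a weight-$2$ eigenform by $E_{p-1}$ and invoking the Deligne-Serre lifting lemma produces a weight-$(p+1)$ form with the same Galois representation. So the admissible-weight set equals $\{k(\rbar \otimes \omega^t)\}$ generically and $\{2, p+1\}$ when $k(\rbar \otimes \omega^t) = 2$.

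Third, I would evaluate $k(\rbar \otimes \omega^t)$ using Serre's recipe. For $t = 0$ it is $k(\rbar)$ by definition. For $t = p - k(\rbar)$ in the split case, the inertia restriction of $\rbar \otimes \omega^t$ becomes $\mathrm{diag}(1,\, \omega^{p - k(\rbar)})$, and the recipe gives $k(\rbar \otimes \omega^{p - k(\rbar)}) = p+1 - k(\rbar)$. Matching these Serre weights to the dichotomy of the previous paragraph in each of the four configurations (split versus non-split, crossed with $k(\rbar) = 2$ or not) produces exactly the lists of pairs in the statement.

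The main obstacle is the careful bookkeeping in the split setting when the Serre weight of some twist degenerates to $2$, at which point the extra weight $k = p+1$ must be included; cleanly tracking this is what separates the answers in cases (b), (c), and (d) and requires the full force of the weight part of Serre's conjecture together with the Deligne-Serre lifting lemma. The hypothesis~\eqref{noE2} enters to keep Eisenstein contributions and self-twists from spoiling the enumeration.
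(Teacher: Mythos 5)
Your proposal follows the paper's proof essentially step for step. The forward direction in both argues that $S(k,t)\neq 0$ forces $k \geq k(\rbar\otimes\omega^t)$, so Lemma~\ref{lemma:krbar_twists} pins down $t$ (only $t=0$ in the non-split case; $t\in\{0,p-k(\rbar)\}$ in the split case), and the determinant-of-inertia congruence $k\equiv k(\rbar)+2t \pmod{p-1}$ — which you phrase as ``any other weight carrying the same twist must differ by a multiple of $p-1$'' — pins down $k$. For the converse both use: minimality of $k(\rbar)$ (a tautology), multiplication by $E_{p-1}$ for the weight-$(p+1)$ occurrence (the same argument the paper recycles from Proposition~\ref{prop:dt-ND}), and, for the split twist $\rbar\otimes\omega^{p-k(\rbar)}$ occurring in weight $p+1-k(\rbar)$, companion forms; the paper cites \cite{Gross-TamenessCriterion} directly while you subsume this under the weight part of Serre's conjecture, but these are the same input. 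Your explicit invocation of Deligne--Serre lifting is implicit in the paper's passage between mod-$p$ and $p$-adic dimensions, but this is cosmetic.

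One point to be careful about, which your write-up (and, arguably, the paper's) glosses over: your last sentence asserts the matching ``produces exactly the lists in the statement'' after running through a ``four configuration'' dichotomy branching only on $k(\rbar)=2$ or not. But your degeneration criterion $k(\rbar\otimes\omega^t)=2$ triggers not only when $k(\rbar)=2$ but also, in the split case, when $p+1-k(\rbar)=2$, i.e.\ $k(\rbar)=p-1$. In that subcase your method places both $2$ and $p+1$ in the admissible-weight set for the twist $t=p-k(\rbar)=1$, producing the extra pair $(p+1,p-k(\rbar))$ not listed in part (c). A complete version of your argument therefore has to branch on $p+1-k(\rbar)=2$ as well, and either recover the stated list by some further exclusion or observe that it needs amending; asserting the match without running that check leaves a gap.
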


\begin{proof}
Assume $S(k,t) \neq 0$ with $k \leq p+1$.  Since $S(k,t) \neq 0$ implies that $k \geq k(\rbar \otimes \omega^t)$,  Lemma \ref{lemma:krbar_twists} implies that $t=0$ in the non-split case and $t=0$ or $t=p-k(\rbar)$ in the split case.
Further, since $\det(\rbar\otimes \omega^t)|_I = \omega^a$ where $a=k(\rbar) - 1 +  2t \bmod p-1$, we must have that $k \equiv k(\rbar) + 2t \pmod{p-1}$. This explains the possible values of $(k,t)$ in each case.

We now catalog references for the converses. The fact that $S(k(\rbar),0) \neq 0$ is a tautology. The fact that $S(p+1,0) \neq 0$ when $k(\rbar) = 2$ follows from the proof of Proposition \ref{prop:dt-ND} (multiplication by $E_{p-1}$). Finally, the fact that $S(p+1-k(\rbar),p-k(\rbar))\neq 0$ in the case $\rbar$ is split follows from the existence of companion forms (\cite{Gross-TamenessCriterion}).
\end{proof}

We know that $d_t$ is quasi-linear with period $p+1$, but now we want to see its exact defect. To ease notation, set
\begin{align*}
m_1 &= 
\begin{cases}
S(k(\rbar),0) & \text{if~} k(\rbar) \neq p+1;\\
S(2,0) & \text{otherwise};
\end{cases}\\
m_2 &= S(p+1,0);\\
m_3 &= S(p+1-k(\rbar),p-k(\rbar)).
\end{align*}
\begin{remark}
When $k(\rbar) = p+1$ (which only happens in the case of a `tr\`es ramifi\'e' extension) we have $m_1 = S(2,0) = 0$. We prefer to write it as $S(2,0)$ for the proof of the next statement.
\end{remark}
\begin{corollary}
\label{cor:qldt}
$\dt$ is quasi-linear with $\Pdt = p+1$ and 
$$
\Qdt = 
\begin{cases}
2m_1 & \tv{if} \rbar \tv{is~non-split~and} k(\rbar) \not\equiv 2 \bmod{p-1};\\
m_1 + m_2 & \tv{if} \rbar \tv{is~non-split~and} k(\rbar)  \equiv 2 \bmod{p-1};\\
2m_1 + 2m_3 & \tv{if} \rbar \tv{is~split~and} k(\rbar) \not\equiv 2 \bmod{p-1};\\
m_1 + m_2 + 2m_3 & \tv{if} \rbar \tv{is~split~and} k(\rbar) \equiv 2 \bmod{p-1}.\\
\end{cases}
$$
\end{corollary}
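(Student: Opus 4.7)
The plan is to reduce $Q_{d_t}$ to a sum over base-weight cuspform dimensions $S(k,t')$ with $k \leq p+1$ (which are enumerated in Proposition \ref{prop:base}), and then to perform a finite case analysis. Starting from $Q_{d_t} = d_t(p+1)-d_t(0)$ and using Lemma \ref{lemma:ES} together with the vanishing of Eisenstein contributions for irreducible $\bar r$, we obtain
\begin{equation*}
2Q_{d_t} = D(k_{p+1,t}-2,\bar r,t) - D(k_{0,t}-2,\bar r,t).
\end{equation*}
Since $k_{p+1,t} - k_{0,t} = p^2 - 1$, Corollary \ref{cor:quasi-linearity-modsymb} rewrites this as
\begin{equation*}
2Q_{d_t} = \sum_{i=0}^{p-2} \dim H^1_c(\Gamma, I_{g_i})_{\bar r \otimes \omega^{t-i}},
\end{equation*}
where $g_i \in \{0,1,\ldots,p-2\}$ is the representative of $k_{0,t}-2-2i \bmod (p-1)$ (using that $H^1_c(\Gamma, I_g) \simeq H^1_c(\Gamma_0, \F_p(a^g))$ via Shapiro's lemma depends only on $g \bmod (p-1)$).

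Next, apply Theorem \ref{theorem:ash-stevens}(a) with $g = g_i$ when $g_i > 0$ and $g = p-1$ when $g_i = 0$ (since $I_0 = I_{p-1}$ and the twist shift $\omega^{-(p-1)}$ is trivial). Combined with Lemma \ref{lemma:ES} and the vanishing of Eisenstein components, this yields
\begin{equation*}
Q_{d_t} = \sum_{i=0}^{p-2}\bigl[S(g_i+2, t-i) + S(p+1-g_i, t-i-g_i)\bigr],
\end{equation*}
where for $g_i = 0$ the summand reads $S(2, t-i) + S(p+1, t-i)$, naturally combining the two weight contributions from the Ash--Stevens splitting of $I_0$.

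Finally, one performs the case analysis. Using $k_{0,t} \equiv k(\bar r)+2t \pmod{p-1}$, for each non-zero base pair $(k,t')$ appearing in Proposition \ref{prop:base} there is a unique $i \pmod{p-1}$ making one of the two summands equal to $S(k, t')$. In case (a), the pair $(k(\bar r), 0)$ matches the first summand at $i \equiv t$ and the second at $i \equiv t+k(\bar r)-2$; these are distinct because $k(\bar r) \not\equiv 2 \pmod{p-1}$, so $Q_{d_t} = 2m_1$. Cases (b), (c), (d) are handled similarly: in (b) and (d) the pair $(p+1,0)$ is absorbed alongside $(2,0)$ into the $g_i=0$ contribution at $i \equiv t$; in (c) and (d) the companion pair $(p+1-k(\bar r), p-k(\bar r))$ matches once through each summand at two distinct values of $i$. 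The main obstacle is verifying in each case that the various contributing $(g_i, i)$-pairs are distinct mod $p-1$ (to avoid double counting) and tracking all twist shifts correctly; once this bookkeeping is settled, each of the four formulas follows by direct summation.
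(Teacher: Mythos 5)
Your proof is correct and follows essentially the same route as the paper's: reduce $Q_{d_t}$ to a sum of $\dim H^1_c(\Gamma, I_g)$ terms via Corollary \ref{cor:quasi-linearity-modsymb}, expand each via Theorem \ref{theorem:ash-stevens}(a) together with Lemma \ref{lemma:ES} (using that the Eisenstein terms vanish since $\rbar$ is irreducible), and finish by a case analysis against Proposition \ref{prop:base}. The only difference is cosmetic — you parametrize by $i$ and work with $g_i\in\{0,\ldots,p-2\}$, handling $g_i=0$ by passing to $g=p-1$, whereas the paper re-indexes by $j\equiv t-i$ and packages the weight normalization into the operator $\alpha(\cdot)$ with values in $\{3,\ldots,p+1\}$; these give literally the same sum.
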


\begin{proof}
If $g \in \Z$ then we define $\alpha(g)$ to be the unique integer satisfying $\alpha(g) \congruent g \bmod p-1$ and $2 < g \leq p+1$. Since $\rbar$ is irreducible, Lemma \ref{lemma:ES} and Theorem \ref{theorem:ash-stevens}(a) combine to show that
\begin{equation*}
{1\over 2}\dim H^1_c(\Gamma,I_g)_{\rbar\otimes \omega^t} = S(\alpha(g+2),t) + S(p+3-\alpha(g+2),t-g)
\end{equation*}
for any integer $g$ and any $t \in \Z/(p-1)\Z$. Returning to the expression for $Q_{d_t}$ implicit in the proof of  Proposition \ref{prop:quasi-linear-galoisreps}, and the statement of Corollary \ref{cor:quasi-linearity-modsymb}, we deduce an explicit formula
\begin{equation}
\label{eqn:final-recursion-sum} 
Q_{d_t} = \sum_{j=0}^{p-2} S(\alpha(k(\rbar)+2j),j) +S(p+3-\alpha(k(\bar r)+2j),-k(\bar r) - j +2).
\end{equation}
To compute $Q_{d_t}$ we can now proceed case-by-case using Proposition \ref{prop:base} because the values in \eqref{eqn:final-recursion-sum} involve weights at most $p+1$.  

When $\rbar$ is non-split with $k(\rbar) \not\equiv 2 \pmod{p-1}$, the first term being summed above only contributes when $j=0$ while the second term only contributes if $j \equiv 2-k(\rbar) \pmod{p-1}$.  In each case, the contribution is $m_1$ and thus we can take $\Qdt = 2m_1$.

Suppose that $\rbar$ is non-split with $k(\rbar) \equiv  2 \pmod{p-1}$. Note that $\alpha(k(\rbar)) = \alpha(2) = p+1$. So, the first term contributes $m_2 = S(p+1,0)$ while the second contributes $m_1 = S(2,0)$. Thus we can take $\Qdt = m_1 + m_2$.

The remaining two cases are handled similarly. We leave them for the reader. 
\end{proof}

\begin{remark}
It is also important to have a base case for the function $d_{p,t}$, i.e.\ we need to calculate $d_{p,t}(0)$. This can readily be done using Proposition \ref{prop:Symg-decompose} combined with the proof of Corollary \ref{cor:qldt}.
When $\rbar$ is non-split and $k(\rbar) \not\equiv 2 \pmod{p-1}$, one has that
$$
\dim S_{k_{0,t}}(\Gamma_0)_{\rbar \otimes \omega^t} = 
\begin{cases}
2m_1 & \text{if~} t \leq  k_{0,t}-2; \\
0 & \text{otherwise}.
\end{cases}
$$
For other $\rbar$, the possible values of $S_{k_{0,t}}$ appear to be $0$, $2m_1$, $2m_2$, or $2m_1+2m_2$.  But we do not see a tidy way to express this short of a bunch of inequalities.  (It is trivial to program the answer on a computer though.)
\end{remark}

\section{A ghost conjecture for Buzzard regular $\rhobar$'s}\label{sec:conj}

We now formulate a $\rhobar$-version of the ghost conjecture.

\subsection{Statement of the conjecture}

Let $p \geq 5$ denote a  prime and, as before, let $\rhobar : G_{\Q} \to \GL_2(\Fpbar)$ denote  a continuous, odd, and semi-simple Galois representation that satisfies the condition \eqref{noE2}.  

Let $k_0$ denote the unique integer such that $0 \leq k_0 < p-1$ and $\det(\rhobar) = \omega^{k_0-1}$.  Let $N$ denote an integer which is prime-to-$p$ and $\rhobar$ is modular of level $N$.  Define $\d$ and $\dn$ as in our running $\rhobar$-component examples where $\d(n) = \dim S_{k_n}(\Gamma_1(N))_{\rhobar}$ and $\dn(n) = \dim S_{k_n}(\Gamma_0)^{p-\new}_{\rhobar}$ (here $\Gamma_0 = \Gamma_1(N)\cap \Gamma_0(p)$ as before).  Let $G_{\rhobar} = G_{\rhobar,N}$ denote the ghost series attached to $k_0$, $\d$, and $\dn$ which we will refer to as the $\rhobar$-ghost series.  For $\kappa$ a weight in $\mathscr W_{k_0}(\C_p)$, let $G_{\rhobar}(\kappa)$ denote the specialization of $G_{\rhobar}$ to weight $\kappa$.

Let $S^{\dag}_{\kappa}(\Gamma_0)$ denote the space of overconvergent cuspforms of weight $\kappa$ and tame level  $\Gamma_1(N)$, and let $S^{\dag}_{\kappa}(\Gamma_0)_{\rhobar}$ be the $\rhobar$-isotypic subspace.  Let $P_{\rhobar}(\kappa)$ denote the characteristic power series of $U_p$ on $S^{\dag}_{\kappa}(\Gamma_0)_{\rhobar}$. Before stating the conjecture, we recall that the `Buzzard regular' condition \eqref{BR} is a synonym for $\rhobar$ being locally reducible at $p$.
\begin{conjecture}
\label{conj:rhobarghost}
If $\rhobar$ satisfies \eqref{BR}, then $\NP(P_{\rhobar}(\kappa)) = \NP(G_{\rhobar}(\kappa))$ 
for all $\kappa \in \mathscr W_{k_0}(\C_p)$.
\end{conjecture}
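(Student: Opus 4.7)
The plan is to leverage the reduction from the $\rhobar$-conjecture to the already-conjectured $\Gamma_0(N)$-level version, wherever possible, and then separate the argument into three regimes of the weight disc: classical weights, the boundary halo, and the interior. As a first step I would establish a factorization (or near-factorization, up to accountable fudge factors) of the full-level ghost series as
\begin{equation*}
G_{N}(w,t) \;=\; \prod_{\rhobar} G_{\rhobar,N}(w,t),
\end{equation*}
compatible with the block decomposition $S^{\dag}_\kappa(\Gamma_0) = \bigoplus_{\rhobar} S^{\dag}_\kappa(\Gamma_0)_{\rhobar}$. Such a factorization follows formally if the dimension data $(d_t,d^{\new}_t)$ sum over $\rhobar$ to the global data, and it turns the $\rhobar$-conjecture into a coherent refinement of the conjecture from \cite{BergdallPollack-GhostPaperShort} rather than an independent assertion.

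Next I would attack classical weights $\kappa=k$ and the boundary of weight space in parallel. For integer $k$, a direct comparison of the $U_p$-slopes on $S_k(\Gamma_0)_{\rhobar}$ with those produced by $G_{\rhobar}(k)$ should be accessible in the Buzzard-regular case via the trianguline/ordinary structure that \eqref{BR} provides; Corollary \ref{cor:highest} and Corollary \ref{corollary:gouvea-distribution} already pin down the correct leading asymptotics, so the content is in controlling the $O(\log n)$ error and matching the integral refinement. On the boundary, for $\kappa$ with $v_p(w_\kappa) < v_0$, the spectral-halo results of Liu--Xiao--Wan and Bergdall--Pollack describe $\NP(P_{\rhobar}(\kappa))$ as a union of arithmetic progressions, while Theorem \ref{theorem:global-halo-progressions} (via Corollary \ref{cor:AP-theorem-halo-cases}) does the same for $\NP(G_{\rhobar}(\kappa))$; the proof then reduces to matching common differences and finitely many initial slopes, a finite check using the explicit constants computed in Section \ref{subsec:buzzard-regular}.

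The hard part, and the real obstacle, is the interior of weight space: non-classical $\kappa$ with $v_p(w_\kappa) \geq v_0$. Neither classical Eichler--Shimura comparisons nor the halo technology controls $\NP(P_{\rhobar}(\kappa))$ there, and a bare rigidity argument is insufficient because both $G_{\rhobar}$ and the Fredholm series of $U_p$ are entire of infinite order, so agreement on any thin set (even the union of classical weights and a boundary annulus) does not by itself force agreement on the whole disk. A plausible route is to interpret the multiplicity pattern \eqref{eqn:mult-pattern} as the combinatorial shadow of a Hodge-like filtration on overconvergent modular symbols, building on the Ash--Stevens exact sequences recalled in Section \ref{subsec:mod-symbs} and the arithmetic-progression constants computed $\rhobar$-by-$\rhobar$; making this precise, and thereby pinning down every coefficient $g_i(w)$ up to a unit, is where I expect the genuinely new input to be required.
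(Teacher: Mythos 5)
The statement you are addressing is labelled a \emph{conjecture} in the paper, and the paper offers no proof of it; Section 6.2 provides only numerical evidence and consistency checks. So there is no ``paper's own proof'' to compare against, and your submission should be read as a research program rather than a proof. You are commendably candid that the interior of weight space is where genuinely new input would be required, but two of the steps you treat as firm are not. First, the factorization $G_N = \prod_{\rhobar} G_{\rhobar,N}$ does \emph{not} ``follow formally'' from additivity of the dimension data: the paper explicitly warns (end of Section \ref{sec:conj}) that if $G_1,G_2$ are abstract ghost series built from $(\d_1,\dn_1)$ and $(\d_2,\dn_2)$, the ghost series built from $(\d_1+\d_2,\dn_1+\dn_2)$ need not have slopes equal to the union of the slopes of $G_1$ and $G_2$. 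The multiplicity pattern \eqref{eqn:mult-pattern} is highly nonlinear in the dimension data, so the zeroes of the $g_i$ do not add across $\rhobar$. The authors only ``believe'' such a meshing statement holds for $\rhobar$-components on a fixed weight-space component and flag it as needing further combinatorial work.

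Second, your boundary argument overstates what is known. Theorem \ref{theorem:global-halo-progressions} (via Corollary \ref{cor:AP-theorem-halo-cases}) controls $\NP(G_{\rhobar}(\kappa))$ in the halo region, but the analogous structure for $\NP(P_{\rhobar}(\kappa))$ is not a theorem: \cite[Conjecture 1.2]{LiuXiaoWan-IntegralEigencurves} is itself a conjecture, and the paper's own remark notes that the $\rhobar$-by-$\rhobar$ halo behavior ``does not follow from any of the partial results'' in \cite{BergdallPollack-FredholmSlopes,LiuXiaoWan-IntegralEigencurves}. So ``matching common differences and finitely many initial slopes'' is not yet a finite check. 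In sum: your outline correctly identifies the landscape (and the interior as the central obstruction), but the factorization and boundary steps are presented as available when in fact both are open, and the conjecture as a whole remains unproven in the paper.
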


Before discussing evidence, let us address the condition \eqref{noE2}. Under \eqref{noE2}, Proposition \ref{prop:quasi-linear-galoisreps} implies that the ghost series can be explicitly constructed after only a finite computation (which becomes shorter after Section \ref{subsec:buzzard-regular}). This allows us to make the numerical tests that follow.

On the other hand, we currently cannot efficiently determine the slopes of the abstract ghost series for $\rhobar=1\oplus \omega$ (or its cyclotomic twists). Out of  prudence, we have omitted making a conjecture in this case, although we know of no reason for Conjecture \ref{conj:rhobarghost} to fail in this case.

Further, we note that even if we remove the \eqref{noE2} hypothesis, it is not a priori clear that Conjecture \ref{conj:rhobarghost} for all locally reducible $\rhobar$ implies the ghost conjecture of \cite{BergdallPollack-GhostPaperShort}.   Indeed, let $G_i$ for $i=1,2$ denote abstract ghost series defined by functions $\d_i$ and $\dn_i$.  It is not generally true that the abstract ghost series defined by $\d_1+\d_2$ and $\dn_1+\dn_2$ has slopes equal to the union of the slopes of $G_1$ and $G_2$.  Nonetheless, we believe that such a statement will hold for $\rhobar$-ghost series defined on the same component of weight space (and could be proven with a little combinatorial care).  
With such a statement in hand, then the full ghost conjecture does indeed follow from the $\rhobar$-ghost conjecture.

\subsection{Numerical evidence}
Along with the theoretical evidence following from the theorems in Sections \ref{sec:distribution} and \ref{sec:progressions}, we also made extensive numerical tests of Conjecture \ref{conj:rhobarghost}.
The analysis in Section \ref{subsec:buzzard-regular} suggests that we consider five different `behaviors' for a fixed  residual representation $\rbar$ with small Serre weight as in \eqref{eqn:rbar-special-form}:
\begin{enumerate}[(A)]
\item \label{item:generic} $\rbar$ is non-split at $p$ and $2< k(\rbar)<p+1$;
\item \label{item:b} $\rbar$ is non-split at $p$ and $k(\rbar)=2$;
\item \label{item:c} $\rbar$ is non-split at $p$ and $k(\rbar)=p+1$;
\item \label{item:d} $\rbar$ is split at $p$ and $2< k(\rbar)<p+1$;
\item $\rbar$ is split at $p$ and $k(\rbar)=2$.
\end{enumerate}
We will then consider Conjecture \ref{conj:rhobarghost} for each twist $\rhobar = \rbar \otimes \omega^t$. Below are forms $f$ whose corresponding $r:=r_f$ account, respectively, for each of the above fives cases.
\begin{enumerate}
\item \label{item:first} $p=13$, $N=1$, and $f=\Delta$.
\item $p=7$, $N=11$, and $f$ corresponds to the  elliptic curve $X_0(11)$. 
\item $p=11$, $N=1$, and $f=\Delta$.
\item $p=23$, $N=1$, and $f=\Delta$.
\item $p=7$, $N=27$ and $f$ corresponds to the CM elliptic curve \cite[\href{http://www.lmfdb.org/EllipticCurve/Q/27.a1}{Elliptic Curve 27.a1}]{LMFDB}.
\end{enumerate}
We also considered two examples arising where $p$ is not $\Gamma_0(N)$-regular, but $\rbar$ still satisfies \eqref{BR}.  Both of these examples correspond to case (\ref{item:generic}) above:\ $\rbar$ is non-split locally at $p$ with Serre weight strictly between 2 and $p-1$.
\begin{enumerate}
\setcounter{enumi}{5}
\item $p=59$, $N=1$, and $f=\Delta$.
\item $p=19$, $N=3$, and $f$ is the unique form of weight 6 and level $\Gamma_0(3)$.
\end{enumerate}
We further considered some odd weight examples. The first two examples correspond to case (\ref{item:generic}) while in the third example $\rbar$ corresponds to case (\ref{item:d}):\ split at $p$ with Serre weight 3.
\begin{enumerate}
\setcounter{enumi}{7}
\item $p=23$, $N=3$, and $f$ is the weight 11 form corresponding to \cite[\href{http://www.lmfdb.org/ModularForm/GL2/Q/holomorphic/3/11/2/a/}{Newform 3.11.2.a}]{LMFDB}. This form is defined over $\Q(\sqrt{-5})$ and we view it as a form over $\Q_{23}$ under the embedding of $\Q(\sqrt{-5})$ into $\overline{\Q}_{23}$ which has $a_2(f) \equiv 4 \bmod{23}$.
\item The same as in example (h) except we choose the embedding which has $a_2(f) \congruent 19 \bmod 23$. 
\item 
$p=11$, $N=7$, and $f$ is the unique weight 3 cuspform which corresponds to \cite[\href{http://www.lmfdb.com/ModularForm/GL2/Q/holomorphic/7/3/6/a/}{Newform 7.3.6.a}]{LMFDB}. 
\end{enumerate}
Even though the $\rhobar$-dimension formulas from Section \ref{subsec:rhobar-spaces} assumed $p>3$, we nonetheless applied these same formulas to a few examples with $p=3$.  Both of these examples have $\rbar$ non-split at $p$ (cases (\ref{item:b}) and (\ref{item:c}) respectively).
\begin{enumerate}
\setcounter{enumi}{10}
\item 
$p=3$, $N=11$, and $f$ is the unique weight 2 cuspform which corresponds to $X_0(11)$.
\item 
$p=3$, $N=7$, and $f$ is the unique weight 4 cuspform. See \cite[\href{http://www.lmfdb.com/ModularForm/GL2/Q/holomorphic/7/4/1/a/}{Newform 7.4.1.a}]{LMFDB}. 
\end{enumerate}
Lastly, we considered an example with $\rbar$ globally reducible.  
\begin{enumerate}
\setcounter{enumi}{12}
\item \label{item:last} $p=17$, $N=2$, and $f = E_8$ so that $\rbar = \omega^7 \oplus 1$.
\end{enumerate}

Now that we have our fixed $\rbar$ we can state our numerical evidence. If $d\geq 0$ is an integer, write $G^{\leq d}_{\rhobar}$ for the truncation of $G_{\rhobar}$ in degree at most $d$.  
Via computer computations performed in Sage (\cite{sagemath}) and Magma (\cite{MagmaCite}) we verified the following:\

\begin{fact}
Let $\rbar = \rbar_f$ denote the mod $p$ Galois representation corresponding to any of the examples (\ref{item:first}) - (\ref{item:last}) above and let $M_{\rbar}$ denote the corresponding constant in the table below.  Then for each $2 \leq k \leq M_{\rbar}$ and $k \congruent k(\rbar) + 2t \bmod p-1$, we have
\begin{equation}
\label{eqn:conj}
\NP(\chr(U_p | S_k(\Gamma_0)_{\rbar \otimes \omega^t})) =
\NP(G^{\leq d}_{\rbar \otimes \omega^t}(k))
\end{equation}
where $d=\dim S_k(\Gamma_0)_{\rbar \otimes \omega^t}$. 

\begin{small}
\begin{center}
\begin{tabular}{|c|c|c|c|c|c|c|c|c|c|c|c|c|c|}
\hline
 & (a) & (b) &(c) &(d) &(e) &(f) &(g) &(h) &(i) &(j) &(k) &(l) &(m) \\
\hline
$M_{\rbar}$ & $1000$ & $200$ & $1000$ & $1000$ & $126$ & $1000$ & $450$ & $400$ & $400$ & $200$ & $200$ & $250$ & $380$ \\
\hline
\end{tabular}
\end{center}
\end{small}
\end{fact}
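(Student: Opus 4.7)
The statement is a finite, explicit, case-by-case numerical verification rather than a theorem requiring a mathematical proof in the traditional sense. The plan is therefore to describe the computational procedure that establishes \eqref{eqn:conj} for each pair $(\bar r, k)$ in the enumerated ranges, and to flag where precision and scaling issues are delicate.

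First, for each base example (a)--(m) one must compute the abstract ghost series $G_{\bar r \otimes \omega^t}$ associated to the data $(d_t, d_t^{\new}, k_{0,t})$ from Section \ref{subsec:rhobar-spaces}. By Proposition \ref{prop:quasi-linear-galoisreps} and Corollary \ref{cor:qldt}, both $d_t$ and $d_t^{\new}$ are quasi-linear with known period $p+1$, so it suffices to tabulate finitely many initial values and then propagate. The initial values $d_t(n)$ and $d_{p,t}(n)$ for small $n$ can be read off either from explicit modular symbol computations over $\mathbf F_p$ (via the Ash--Stevens decomposition used in Section \ref{subsec:mod-symbs}) or more directly by decomposing $S_{k_{n,t}}(\Gamma)$ into Hecke eigensystems and counting those whose associated mod $p$ representation is isomorphic to $\bar r \otimes \omega^t$. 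Once $d_t$ and $d_t^{\new}$ are known up through the weight needed, the polynomials $g_i(w)$ and hence the truncated specialization $G_{\bar r \otimes \omega^t}^{\leq d}(k)$ are immediate from Definition \ref{defn:ghost-series}; one then reads off the Newton polygon.

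Second, for each example and each weight $k \leq M_{\bar r}$, compute the left-hand side of \eqref{eqn:conj}. This requires constructing the space $S_k(\Gamma_0)_{\bar r \otimes \omega^t}$ and the matrix of $U_p$ on it. In practice this is done by constructing $S_k(\Gamma_0)$ using modular symbols (Magma's built-in routines suffice), cutting out the $\bar r \otimes \omega^t$-isotypic component by intersecting kernels of $T_\ell - a_\ell(\bar r \otimes \omega^t)$ for enough small primes $\ell$ not dividing $Np$, computing the characteristic polynomial of $U_p$ on that summand, and then extracting its $p$-adic Newton polygon. The Newton polygon comparison can then be performed exactly as integer/rational data.

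The main obstacle is twofold. The computational cost grows with $k$ because $\dim S_k(\Gamma_0)_{\bar r \otimes \omega^t}$ grows linearly in $k$ and the $U_p$-matrices must be manipulated with enough $p$-adic precision to determine the Newton polygon unambiguously; this is what limits $M_{\bar r}$ in each case (and explains why examples with small $p$ and small level push further than $p=59$). The second subtlety is the identification of the $\bar r \otimes \omega^t$-component: when several twists of $\bar r$ are modular of the same level and weight, one must separate them using sufficiently many Hecke operators, and the correct $(k,t)$ pairs are precisely those compatible with $k \equiv k(\bar r) + 2t \pmod{p-1}$ as used throughout Section \ref{subsec:buzzard-regular}. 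Given those care points, the verification reduces to running the loop over $k$ and comparing two explicit Newton polygons, with no conceptual input beyond what is already in Sections \ref{sec:abstract-series} and \ref{sec:rhobar-space}.
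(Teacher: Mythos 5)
Your proposal correctly identifies that this Fact is a report of finite computer verification rather than a theorem, and your description of the methodology — tabulating $d_t$, $d_t^{\new}$ via quasi-linearity (Proposition \ref{prop:quasi-linear-galoisreps} and Corollary \ref{cor:qldt}), assembling $G_{\rbar\otimes\omega^t}$ from Definition \ref{defn:ghost-series}, and comparing against Newton polygons of $U_p$-characteristic polynomials computed from modular symbols — is exactly what the paper's Sage/Magma computations entail. The paper itself states only that the verification was performed in Sage and Magma, so your proposal supplies the same computational recipe the authors implicitly used, and your observations about the asymmetry in cost between the two sides (ghost side cheap, classical $U_p$ side expensive) match the paper's own remark.
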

We note that while it looks like weights up to $1000$ are tested in example (a), say, half of them are the wrong parity and we are actually testing $p-1 = 12$ different conjectures. So each case of Conjecture \ref{conj:rhobarghost} was tested in (a) for approximately $500/12 \approx 41$ different weights.

We also note that while the right-hand side of \eqref{eqn:conj} is easy to compute, the left-hand side becomes very difficult to compute as $k$ grows. Indeed, for a fixed $k$ around $M_{\rbar}$, computing the slopes on the left-hand side of \eqref{eqn:conj} could take over a day of CPU time whereas computing the corresponding ghost slopes takes only a few seconds.

We close with two more observations.

\begin{remark}
\leavevmode
\begin{enumerate}
\item In examples (h) and (i), the associated $\rbar$'s are globally distinct, but locally isomorphic at $p$.  Moreover, each $\rbar$ occurs with multiplicity one in its Serre weight.  These facts force the associated ghost series in these two cases to be identical. In particular, Conjecture \ref{conj:rhobarghost} predicts that the slopes in these two cases are identical, and the data we collected above was in complete agreement with these observations.

\item  
We also performed numerical experimentations of the $w$-adic ghost slopes (see Remark \ref{rmk:wadic}) and we noticed that in the generic case all of the $w$-adic slopes were distinct.  By generic, we mean $\rbar$ is irreducible, locally reducible non-split at $p$, $2 < k(\rbar) < p-1$, and $\dim S_{k(\rbar)}(\Gamma)_{\rbar} =1$.
These computations were carried out for all $p<40$ and all generic values of $k(\rbar)$. We strongly suspect that this patterns holds in general as we not only observed that the slopes were distinct but could write down explicit formulas for the consecutive differences of the $w$-adic slopes in terms of $p$ and $k(\rbar)$ which were visibly non-zero when $2 < k(\rbar) < p-1$.

We note that this observation combined with Conjecture \ref{conj:rhobarghost} implies that the $\rhobar$-eigencurve is smooth in the spectral halo region, and, moreover, is just an infinite union of annuli.  One should compare this prediction with \cite[Conjecture 1.2]{LiuXiaoWan-IntegralEigencurves} where it is conjectured that the eigencurve is an infinite union of finite flat coverings of weight space.
\end{enumerate}
\end{remark}

\bibliography{ghost_bib}

\begin{thebibliography}{10}

\bibitem{AshStevens-Duke}
A.~Ash and G.~Stevens.
\newblock Modular forms in characteristic {$l$} and special values of their
  {$L$}-functions.
\newblock {\em Duke Math. J.}, 53(3):849--868, 1986.

\bibitem{BellaicheDasgupta}
J.~Bella{\"{\i}}che and S.~Dasgupta.
\newblock The {$p$}-adic {$L$}-functions of evil {E}isenstein series.
\newblock {\em Compos. Math.}, 151(6):999--1040, 2015.

\bibitem{BergdallPollack-GhostPaperShort}
J.~Bergdall and R.~Pollack.
\newblock Slopes of modular forms and the ghost conjecture.
\newblock {\em To appear in {I}nt. {M}ath. {R}es. {N}ot.}
\newblock Available at {\tt arXiv:1611.03804}.

\bibitem{BergdallPollack-FredholmSlopes}
J.~Bergdall and R.~Pollack.
\newblock Arithmetic properties of {F}redholm series for {$p$}-adic modular
  forms.
\newblock {\em Proc. Lond. Math. Soc.}, 113(3):419--444, 2016.

\bibitem{MagmaCite}
W.~Bosma, J.~Cannon, and C.~Playoust.
\newblock The {M}agma algebra system. {I}. {T}he user language.
\newblock {\em J. Symbolic Comput.}, 24(3-4):235--265, 1997.
\newblock Computational algebra and number theory (London, 1993).

\bibitem{Buzzard-SlopeQuestions}
K.~Buzzard.
\newblock Questions about slopes of modular forms.
\newblock {\em Ast\'erisque}, (298):1--15, 2005.

\bibitem{Clay-Slopes}
L.~Clay.
\newblock {\em Some conjectures about the slopes of modular forms}.
\newblock ProQuest LLC, Ann Arbor, MI, 2005.
\newblock Thesis (Ph.D.)--Northwestern University.

\bibitem{Edixhoven-Weights}
B.~Edixhoven.
\newblock The weight in {S}erre's conjectures on modular forms.
\newblock {\em Invent. Math.}, 109(3):563--594, 1992.

\bibitem{Gouvea-WhereSlopesAre}
F.~Q. Gouv{\^e}a.
\newblock Where the slopes are.
\newblock {\em J. Ramanujan Math. Soc.}, 16(1):75--99, 2001.

\bibitem{Gross-TamenessCriterion}
B.~H. Gross.
\newblock A tameness criterion for {G}alois representations associated to
  modular forms (mod {$p$}).
\newblock {\em Duke Math. J.}, 61(2):445--517, 1990.

\bibitem{LiuXiaoWan-IntegralEigencurves}
R.~Liu, D.~Wan, and L.~Xiao.
\newblock The eigencurve over the boundary of the weight space.
\newblock {\em Duke Math. J.}, 166(9):1739--1787, 2017.

\bibitem{Serre-SurLesRep}
J.-P. Serre.
\newblock Sur les repr\'esentations modulaires de degr\'e {$2$} de {${\rm
  Gal}(\overline{\bf Q}/{\bf Q})$}.
\newblock {\em Duke Math. J.}, 54(1):179--230, 1987.

\bibitem{Serre-Equidistribution}
J.-P. Serre.
\newblock R\'epartition asymptotique des valeurs propres de l'op\'erateur de
  {H}ecke {$T\sb p$}.
\newblock {\em J. Amer. Math. Soc.}, 10(1):75--102, 1997.

\bibitem{Stein-ModularForms}
W.~Stein.
\newblock {\em Modular forms, a computational approach}, volume~79 of {\em
  Graduate Studies in Mathematics}.
\newblock American Mathematical Society, Providence, RI, 2007.
\newblock With an appendix by Paul E. Gunnells.

\bibitem{sagemath}
W.~Stein et~al.
\newblock {\em {S}age {M}athematics {S}oftware}.
\newblock The Sage Development Team.
\newblock {\tt http://www.sagemath.org}.

\bibitem{LMFDB}
{The LMFDB Collaboration}.
\newblock The {L}-functions and {M}odular {F}orms {D}atabase.
\newblock \href{http://www.lmfdb.org}{http://www.lmfdb.org}, 2013.
\newblock [Online; accessed 16 September 2013].

\end{thebibliography}
\bibliographystyle{abbrv}

\end{document}